\numberwithin{equation}{section}
\theoremstyle{plain}
\newtheorem{Theo}{Theorem}[section] 
\newtheorem{Pro}[Theo]{Proposition}        
\newtheorem{Lem}[Theo]{Lemma}            
\newtheorem{cor}[Theo]{Corollary}
\theoremstyle{definition}
\newtheorem{Defi}[Theo]{Definition}
\newtheorem{exam}[Theo]{Example}
\newtheorem{exams}[Theo]{Examples}
\newtheorem{nota}[Theo]{Notation}
\newtheorem{conv}[Theo]{Convention}
\theoremstyle{remark}
\newtheorem{rem}[Theo]{Remark}
\def\ogg~{{\rm \og}}   
\def\nn{\noindent}
\def\qq{\nn\quad}
\def\qqq{\nn\quad\quad}
\def\emptyset{\varnothing}
\def\NN{{\mathbb N}}    
\def\ZZ{{\mathbb Z}}     
\def\RR{{\mathbb R}}    
\def\AA{{\mathbb A}}    
\def\PP{{\mathbb P}}
\def\cA{{\mathcal A}}   \def\cM{{\mathcal M}} \def\cS{{\mathcal S}}      \def\cC{{\mathcal C}}   \def\cO{{\mathcal O}} \def\cU{{\mathcal U}} \def\cD{{\mathcal D}}     \def\cE{{\mathcal E}}  \def\cK{{\mathcal K}}     \def\cL{{\mathcal L}} \def\cR{{\mathcal R}}    
\renewcommand{\hom}{\operatorname{Hom}}
\newcommand\ct{\operatorname{cotan}}         
\newcommand{\Ker}{\operatorname{Ker}}
\newcommand{\res}{\operatorname{Res}}
\newcommand{\car}{\operatorname{Card}}
\renewcommand{\dim}{\operatorname{dim}}
\newcommand{\rank}{\operatorname{rank}}
\newcommand{\disf}[2]{D^+(#1,#2)}
\newcommand{\diso}[2]{D^-(#1,#2)}
\newcommand{\h}[1]{\mathscr{H} (#1)}
\newcommand{\A}[1]{\AA^{1,\mathrm{an}}_{#1}}
\newcommand{\Po}[1]{\PP^{1,\mathrm{an}}_{#1}}
\newcommand{\D}[1]{\frac{\mathrm{d}}{\mathrm{d#1}}}
\newcommand{\LL}[2]{\cL_{#1}(#2)}
\newcommand{\Lk}[1]{\LL{F}{#1}}
\newcommand{\nsp}[1]{\rVert #1\rVert_{\mathrm{Sp}}}
\newcommand{\piro}[2]{\pi_{#1/#2}}
\newcommand{\pik}[1]{\piro{#1}{F}}
\newcommand{\nor}[1]{\rVert #1\rVert}
\newcommand{\discf}[3]{D^+_{#1} (#2,#3)}
\newcommand{\disco}[3]{D^-_{#1} (#2,#3)}
\newcommand{\couf}[3]{A^+ (#1,#2,#3)}
\newcommand{\couo}[3]{A^- (#1,#2,#3)}
\newcommand{\Couf}[4]{A^+_{#1} (#2,#3,#4)}
\newcommand{\Couo}[4]{A^-_{#1} (#2,#3,#4)}
\newcommand{\DI}[2]{\mathbb{D}_{#1,#2}}
\newcommand{\di}[1]{\mathbb{D}_{#1}}
\newcommand{\Fonction}[4]{\begin{array}[c]{rcl} 
                            #1&\longrightarrow&#2\\ #3&\mapsto&
                                                                #4\\ \end{array}}
\newcommand{\fra}[1]{\frac{1}{#1}}
\renewcommand\phi{\varphi}
\renewcommand\epsilon{\varepsilon}
\def\ct{\hat{\otimes}}
\def \ot{\otimes}
\def \<{\langle}
\def \>{\rangle}
\def\|{\rVert}
\def\*{\blacklozenge}
\def\mr{\mathrm}
\def\dT{\frac{\mr{d}}{\mr{dT}}}
\def \R+{\RR_{+}}
\def\AF{\A{F}}
\def \Pf{\Po{F}}
\def \-1{^{-1}}
\def \Hx{\mathscr{H}(x)}
\def \Hy{\mathscr{H}(y)}
\def \rk{\widetilde{F}}
\def \crk{\mathrm{char}(\rk)}
\def\(({(\!(}
\def\)){)\!)}
\def\DD{\mathscr{D}}
\def \dI{\di{I}}
\def\kac{\widehat{F^{alg}}}
\def\wac{\widehat{\Omega^{alg}}}  
\def\EE{\mathscr{E}}
\def\Af{\mathsf{A}}
\def\F{\mathscr{F}}
\def\mscr{\mathscr}
\def\RS{\cR^{\mr{Sp}}}
\def\Xh{X^{\rm{hyp}}}
\def\Spe{\Sigma_{\F,c,f}}
\newcommand{\spe}[1]{\Sigma_{#1,c,f}}
\renewcommand{\geq}{\geqslant}
\renewcommand{\leq}{\leqslant}
\begin{document}

\title{Spectrum of p-adic linear differential equations
  II: Variation of the spectrum}

\author{Tinhinane A. AZZOUZ}
\address{Tsinghua University, Yau Mathematical Sciences Center,
  Yanqi Lake Beijing Institute of Mathematical Sciences and applications,
  Beijing, China}
\email{azzouzta@bimsa.cn}
\thanks{The author was supported by NSFC Grant \#12250410239.}

\subjclass[2020]{Primary
  12H25; Secondary 14G22, 11F72}

\keywords{$p$-adic differential equations, Spectral theory, Berkovich
  spaces, Radius of convergence}

\begin{abstract}
The primary objective of this paper is to generalize the results of
\cite{Azz24} to the case of quasi-smooth Berkovich curves by
establishing a connection between the spectrum and the radii of
convergence. To achieve this, we investigate the continuity and
variation of the spectrum of a $p$-adic linear differential
equation. Our findings demonstrate that the spectrum can be governed
by any controlling graph of the radii of convergence. Furthermore, by
analyzing the shape of the spectrum, we prove that approximating the
connection enables an accurate estimation of spectral radii of
convergence. As a result, we obtain a decomposition theorem with
respect to the spectrum for differential equations defined over a
quasi-smooth curve at a generic point $x$. This decomposition refines
the one provided by the spectral radii of convergence. Previous works
\cite{DR77,np3} have shown that decomposition with respect to the
spectral radii can be extended to a neighborhood of $x$. We prove that
this can also hold for the decomposition with respect to the spectrum.
\end{abstract}
\maketitle

\section{Introduction}

Considering a linear differential equation in the ultrametric setting,
the radii of convergence of the solutions are among the most powerful
invariants. Indeed, they exhibit features not present in the complex
field. In particular, for a linear differential equation defined over
an open disk, the solutions may not converge everywhere; furthermore,
the radii of convergence may not be constant. However, based on the
recent works of F. Baldassarri, K.S. Kedlaya, J. Poineau, and
A. Pulita \cite{bal10, BDV08, ked13, np2, np3, np4}, it is now
well-established that the radii of convergence exhibit continuity and
are effectively governed by a locally finite graph. This controlling
graph exclusively dictates their variations. These findings have
profound implications for the finite dimensionality of the de Rham
cohomology. Particularly noteworthy are the results obtained under
specific assumptions, wherein the finiteness of the controlling graph
for the radii directly implies the finite dimensionality of the de
Rham cohomology. Such implications hold significant importance due to
the deep interconnections between $p$-adic linear differential equations
and other fields in number theory, including $p$-adic cohomology, $p$-adic
Hodge theory, and Galois representations.

Moreover, in a recent paper \cite{BP20}, V. Bojkovi\'c and J. Poineau
prove that knowledge of the radii of convergence can be used to
describe the ramification of finite étale morphisms. In particular, they provide the link with the upper ramification jumps of the extension of the residue field of the point given by a finite morphism.

The aim of our work is to provide a dictionary between the spectral
radii of convergence and the spectrum of a linear differential
equation. The spectrum of a differential equation is a geometric
invariant lying in the Berkovich analytic line, which we introduce in
our previous works \cite{Azz20, Azz21, Azz24}. This paper is a sequel
to \cite{Azz24}, where we determine the general shape of the spectrum
of a linear differential equation; additionally, for differential
equations defined on an analytic domain of the analytic line, we
establish the link between the spectral radii of convergence and the
spectrum and provide a decomposition theorem with respect to the spectrum.

Before summarizing the main ideas of the paper, let us fix the
settings. We consider an algebraically closed complete ultrametric
field $(F, |.|)$ with mixed characteristic. We set $\omega =
|p|^{\frac{1}{p-1}}$,  where $\crk = p > 0$. We consider analytic spaces in
the sense of Berkovich. We denote $\AF$ (resp. $\Pf$) as the analytic
affine (resp. projective) line, $\disf{c}{r}$ (resp. $\diso{c}{r}$) as
the closed (resp. open) disk of $\AF$ centered at $c$ with radius equal to $r$, and $x_{c,r}$ as the Shilov boundary
of the closed disk $\disf{c}{r}$. We fix $T$ to be a coordinate
function on $\AF$. For a complete extension $(\Omega, |.|)$ of $(F,
|.|)$, we set $\pi_{\Omega/F} : \A{\Omega} \to \AF$ as the canonical
projection. By a differential equation $(\F, \nabla)$ over a
quasi-smooth analytic curve $(X,\cO_X)$, we mean a locally free $\cO_X$-module $\F$ of
finite type with an integrable connection $\nabla$. For $x \in \Xh = X
\setminus X(F)$, we set $\RS_1(x, (\F, \nabla)) \leq \cdots \leq
\RS_n(x, (\F, \nabla))$ to be the spectral radii of convergence of
$(\F, \nabla)$ at $x$, which are the normalized radii of convergence of the solutions on a generic
disk of $x$ (the radii of convergence as defined on \cite{np3} on the
generic disk $D(x)$ of $x$). For a precise
definition, see Section~\ref{sec:radii-conv-line-3}. For any $x \in
X$, we denote by $\Hx$ the associated complete residue field of
$x$. For $x \in \Xh$, let $d$ be a nonzero bounded $F$-linear derivation, defined on an affinoid neighborhood
of $x$. We set $\F(x) = \F_x \otimes_{\cO_{X,x}} \Hx$. Since $\F_x$ is a
free $\cO_{X,x}$-module, by choosing a basis, $\F(x)$ acquires a
natural structure of $\Hx$-Banach space, fortunately, this
does not depend on the choosing of the basis. Thus, we obtain an
$F$-Banach space structure on $\F(x)$. Let
$\nabla_{d}:\F(x)\to\F(x)$ be the bounded connection on $\F(x)$
induced by $\nabla$ after choosing $d$. The spectrum of $(\F, \nabla)$
at $x$ with respect to $d$
is the spectrum, in the sense of Berkovich, of $\nabla_d$ as a bounded
$F$-linear operator on $\F(x)$. 

This paper deals with the following main issues:
\begin{itemize}
\item Studying the continuity and variation of the spectrum of a differential equation when $x$ varies on $\Xh$.
\item Establishing the link between the spectral radii of convergence
  and the spectrum, as in \cite{Azz24}, for the case of a quasi-smooth
  curve.
\item Provide a decomposition with respect to the spectrum, as in \cite{Azz24}, for the case of a quasi-smooth
  curve. 
\item Approximating the spectral radii of convergence by approximating
  the connection.
\item Providing a decomposition of a differential equation at a point 
$x$, with respect to the spectra corresponding to branches out of 
$x$. Then extend this decomposition to a neighborhood of $x$.
\end{itemize}

We begin by explaining what we mean by studying the variation of the
spectrum. Let $X$ be a connected affinoid domain of $\AF$, and let
$(\F,\nabla)$ be a differential equation over $X$ of rank $n$. Let
$x\in \Xh$ be not of type (4), and let $c\in F$ such that $x=x_{c,r}$
with $r>0$. Let $f$ be a coordinate function on $X$, in the sense where the induced
map $X\to \AF$ by $F[T]\to \cO(X)$, $T-c\mapsto f$, is a closed
immersion and $f(X)=X$. We assume that for all $y$, we
have $f(y)=y$. We set $d_f=f\D{f}=\frac{f}{\dT(f)}\dT$. From
\cite{Azz24}, for all $y\in (c,x]\cap X$ there exist
$\omega_1,\cdots,\omega_\mu\in\AF$, such that the spectrum has the form
$\Sigma_{\nabla_{d_f},F}(\Lk{\F(y)})=\{\omega_1,\cdots,
\omega_\mu\}+\ZZ_p$. However, we should precise that given two such
coordinate function $f$ and $g$ with $f\ne g$, the spectra
$\Sigma_{\nabla_{d_f},F}(\Lk{\F(y)})$ and
$\Sigma_{\nabla_{d_g},F}(\Lk{\F(y)})$ may not be equal. This due to
the fact that if $\frac{f}{\dT(f)}\ne
\frac{g}{\dT(g)}$, then $\nabla_f$ and $\nabla_g$ are different as
bounded $F$-linear operators. This point is quite important, because the
resulting decomposition with respect to the spectrum may not be the
same. Let us clarify this by an example:

\begin{exam}\label{sec:introduction-4}
  Let $X=\disf{0}{R}\setminus\diso{0,r}$, with $R>1$ and $r<1$. Let
 $c, a_1, a_2, a_3\in F$ with $|c|<r$, $a_i-a_j\not\in\ZZ_p$ for
 $i\ne j$ and
 $|a_1|=|a_2|=|a_3|=1$. We consider the differential module
 $(M,\nabla)$ on $(\h{x_{0,1}},\nabla)$ defined as follows:
 \begin{equation}
  \label{eq:13}
  \nabla\begin{pmatrix}f_1\\f_2\\f_3 \end{pmatrix}=\begin{pmatrix}\dT
   f_1\\ \dT f_2\\ \dT f_3 \end{pmatrix}+
  \begin{pmatrix}
   \frac{a_1}{T} &1& 0\\ 0& \frac{a_2}{T-c}& 1\\
   0&0&\frac{a_3}{T-1} 
  \end{pmatrix} \begin{pmatrix}f_1\\f_2\\f_3 \end{pmatrix}.
 \end{equation}
 Then we have $\Sigma_{\nabla_{d_T},F}(\Lk{M})=(\{a_1\}+\ZZ_p)\cup
 \{x_{0,1}\}$ and $\Sigma_{\nabla_{d_{T-c}},F}(\Lk{M})=(\{a_2\}+\ZZ_p)\cup
 \{x_{0,1}\}$. Let $(M,\nabla)=(M_1,\nabla_1)\oplus (M_2,\nabla_2)$
 (resp. $(M,\nabla)=(M'_1,\nabla'_1)\oplus (M'_2,\nabla'_2)$) the
 decomposition with respect $\Sigma_{\nabla_{d_T},F}(\Lk{M})$
 (resp. $\Sigma_{\nabla_{d_{T-c}},F}(\Lk{M})$) with
 $\Sigma_{(\nabla_1)_{d_T},F}(\Lk{M_1})=\{a_1\}+\ZZ_p$ and
 $\Sigma_{(\nabla'_1)_{d_{T-c}},F}(\Lk{M_1'})=\{a_2\}+\ZZ_p$. We have $\dim
 M_1=\dim M'_1$, but $(M_1,\nabla_1)\not\simeq (M'_1,\nabla'_1)$,
 because $(M_1,\nabla_1)\simeq (\Hx,\dT+\frac{a_1}{T})$ and
 $(M'_1,\nabla'_1)\simeq (\Hx,\dT+\frac{a_2}{T-c})$.
 For the computation of the spectrum we use \cite[Theorem~4.23]{Azz24}.
\end{exam}
We emphasize that the proposition below holds even for a coordinate function defined only on an affinoid domain of $\AF$. On the other hand,we explain later how, by selecting multiple coordinate functions, we can refine the decomposition with respect to the spectrum. 

By equipping $\cK(\AF)$, the set
of compact sets of $\AF$, with the exponential topology (see
Section~\ref{sec:cont-spectr-an} for a precise definition), we can study the continuity of the following map:
\begin{equation}
\Fonction{\Sigma_{\F,c,f}: (c,x]\cap X}{\cK(\AF)}{y}{\Sigma_{\nabla_{d_f},F}(\Lk{\F(y)})},
\end{equation}
where $\Sigma_{\nabla_{d_f},F}(\Lk{\F(y)})$ is the spectrum of
$\nabla_{d_f}$ as an element of $\Lk{\F(y)}$, the $F$-Banach algebra
of bounded $F$-linear endomorphisms of $\F(y)$. We prove the following
result.

\begin{Pro}[Proposition~\ref{sec:vari-spectr-case-4}]\label{sec:introduction}
Let $y_0\in (c,x]\cap X$. Let $z_1,\cdots, z_\nu\in \AF\setminus F$,
$a_1,\cdots,a_\mu\in F$, with $(\nu,\mu) \ne (0,0)$, such that
$\Sigma_{\F,c,f}(y_0)=\{z_1,\cdots,z_\nu,a_1,\cdots,a_\mu\}+\ZZ_p$,
and $\{z_1,\cdots,z_\nu,a_1,\cdots, a_\mu\}$ has minimal
cardinality. Then in
$(c,y_0]\cap X$
(resp. $[y_0,x]$ if $y_0\ne x$) there exists an interval neighborhood $I$ of $y_0$ such that
\begin{enumerate}
 
\item for each $z_i$ there exist $c_{i,1},\cdots, c_{i,{n_i}}\in F$
 ($c_{i,j}$ are not necessarily distinct)
 and $\phi_{i,1},\cdots, \phi_{i,{n_i}}:I\to \R+$ piecewise log-affine on $I$
 such that $z_i=x_{c_{i,j}, \phi_{i,j}(y_0)}$,
\item for each $a_i$ there exist
 $\psi_{i,1},\cdots,\psi_{i,m_i}:I\to\R+$ continuous on $I$ and piecewise log-affine on $I\setminus\{y_0\}$ such that
 $\psi_{i,j}(y_0)=0$ (i.e. $a_i=x_{a_i,\psi_{i,j}(y_0)}$),
\item for all $y\in I$ we have
 \[\Sigma_{\F,c,f}(y)=\{x_{c_{1,1},\phi_{1,1}(y)},\cdots,
  x_{c_{\nu,n_\nu},\phi_{\nu,n_\nu}(y)},
  x_{a_1,\psi_{1,1}(y)},\cdots,x_{a_\mu,\psi_{\mu,m_\mu}(y)}\}+\ZZ_p,\]
 \item the set $\{x_{c_{1,1},\phi_{1,1}(y)},\cdots,
  x_{c_{\nu,n_\nu},\phi_{\nu,n_\nu}(y)},
  x_{a_1,\psi_{1,1}(y)},\cdots,x_{a_\mu,\psi_{\mu,m_\mu}(y)}\}$
  has minimal cardinality.
 \end{enumerate}
\end{Pro}

In particular, Proposition~\ref{sec:introduction} implies the continuity of
the spectrum on $(c,x]\cap X$. Recall that by choosing different derivations for each branch out of $x$ (i.e
each $c$ and $f$), we may obtain different spectra $\Sigma_{\F,c,f}(x)$ (see
Example~\ref{sec:introduction-3}). Hence, in such circumstances,
discussing the continuity of the spectrum across an entire
neighborhood of $x$ in $X$ becomes challenging. One might presume that
fixing the same derivation on an entire neighborhood of $x$ in $X$ is
necessary to achieve continuity throughout this
neighborhood. However, the spectrum is generally not continuous, even
in this scenario. To illustrate, consider the trivial differential
equation on $\AF\setminus\{0\}$ with the derivation $T\dT$ fixed. For
any point $x$ of type (2) within $\AF\setminus[0,\infty)$, the
spectrum exhibits discontinuity at $x$. This phenomenon is further
elaborated in \cite[Theorem~3.20]{Azz24}.

We may ask whether we can choose the same $c_{i,j}$ in both sides out of
$y_0$ in $(c,x]\cap X$. In general it is not possible; consider, for example, the equation of rank one
 $\dT+\frac{a}{T(c-T)}$ defined over $\disf{0}{1}\setminus\{0,c\}$,
with $y_0=x_{0,|c|}$, $|c|<1$ and $|a|>1$. Indeed, in the branch $(0,x_{0,1}]$,
letting $\epsilon>0$ be as small as necessary, 
for $|c|-\epsilon< r\leq|c|$ we have
$\Sigma_{\F,0,T}(x_{0,r})=\{x_{{a}{c\-1},{|ac^{-2}|r}}\}+\ZZ_p=\{x_{{a}{c\-1},{|ac^{-2}|r}}\}$, and for
$|c|\leq r<|c|+\epsilon$, we have
$\Sigma_{\F,0,T}(x_{0,r})=\{x_{0,|a|r\-1}\}+\ZZ_p=\{x_{0,|a|r\-1}\}$. A possible explanation
of this phenomenon is that the spectrum provides the
formal exponent in the first neighborhood around $0$ and in the second one around
$\infty$. Unfortunately, we cannot uniquely determine
$\phi_{i,1},\cdots, \phi_{i,{n_i}}$
(resp. $\psi_{i,1},\cdots,\psi_{i,m_i}$) on the whole $(c,x]\cap X$.

As we pointed out before, we need to choose a derivation to determine the
spectrum, and the resulting spectrum depends on the choice of this derivation. For the case of the analytic line, the derivation $d_f$
makes possible to determine the shape of the spectrum, and also it allows us to establish the link between the radii of
convergence, and provide a decomposition theorem with respect to the
spectrum (\cite{Azz24}).

Assume now that $X$ is a general connected quasi-smooth curve. In this case, a
result of A. Ducros \cite{Duc}, based on the semi-stable reduction
theorem, ensure that there exists a locally finite set $\cS$ of points of type
(2) or (3), for which $X\setminus \cS$ is a disjoint union of open disks and annuli. The set
$\cS$ is called a weak triangulation, and the associated graph
$\Gamma_{\cS}$ is the union of $\cS$ and the skeleton of the annuli of
$X\setminus\cS$. Therefore,
for any point of $X\setminus \cS$ we can choose a derivation of the
kind $d_f$. However, a type (2) point $x\in \cS$ may not have a neighborhood
isomorphic to an analytic domain of the affine analytic line. For this issue,
we proceed as in \cite{np2}, where J. Poineau and A. Pulita provide an étale
morphism between a neighborhood of $x$ and an analytic domain of
$\Pf$ and use it to reduce to the analytic line case. In this
paper, we use these kind of morphisms to pull back derivations of
the form $d_f$.

Let us explain this in more details. Let $X$ be a quasi-smooth
connected analytic curve, let $x\in X$ be
a point of type (2) or (3). Let $\mathscr{C}$ be a quasi-smooth algebraic
curve such that an affinoid neighborhood of $x$ is isomorphic to an
affinoid domain of $\mathscr{C}$ and $x$ is in the interior of
$\mathscr{C}$. Let $T_x\mathscr{C}$ be the set of branches
out of $x$ in $\mathscr{C}$, and we call the set
$T_x\mathscr{C}\setminus T_xX$ the set of missing
branch from $X$. Let $b\in T_x\mathscr{C}$. We
choose the bounded derivation $d_b$ as follows: If $x$ is of type (2),
there exists a finite étale morphism $\Psi_b:
Y_b\to W_b$ as in Theorem~\ref{sec:spectr-radii-diff}
(cf. \cite[Theorem~3.12]{np2}), where $Y_b$ is an affinoid neighborhood
of $x$ in $\mathscr{C}$, $W_b\subset
\AF$, ${\rm deg}(\Psi_b)$ is coprime to $p$, and $\Psi_b\-1(\Psi_b(b))=\{b\}$. If $x$ is of
type (3) we can choose $\Psi_b: Y_b\to W_b$ to be an isomorphism, where
$Y_b$ is an affinoid neighborhood of $x$ in $\mathscr{C}$ and $W_b$
an affinoid domain of $
\AF$. Let $\Psi^{\#}_b: \cO_{W_b}(W_b)\to \cO_{Y_b}(Y_b)$ be the
morphism of $F$-Banach algebras induced by $\Psi_b$.

Let $C_b$ (resp. $C_{\Psi(b)}$) be an open annulus whose skeleton contains
$b$ (resp. $\Psi_b(b)$). Let $c_b\in F$ such that
$C_{\Psi(b)}=\couo{c_b}{r_1}{r_2}$. Let $f_b=\Psi^{\#}_b(T-c_b)$; then
we consider
\begin{equation}\label{eq:A82}
 d_b={\rm deg}(\Psi_b)\cdot f_b\cdot\Psi_b^*(\dT).
\end{equation}

By construction $d_b$ is a nonzero bounded $F$-linear derivation well defined on
$\cO_X(Y_b)$, which extends without any problem to $\h{y}$ for all
$y\in Y_b$. Let $\Omega$ be a complete extension of $(F,|.|)$, we set
\begin{equation}\label{eq:93}
 \Fonction{\delta: \Omega}{\R+}{z}{\inf_{n\in\ZZ}|z-n|}
\end{equation}
and for $\rho\in\R+$ we set 
\begin{equation}\label{eq:94}
 \cR(\rho)=
 \begin{cases}
  1& \text{ if } \rho=0,\\
  \left(\frac{|p|^l\omega}{\rho}\right)^{\fra{p^l}}& \text{ if }
                            |p|^l\leq
                            \rho\leq
                            |p|^{l-1},\;
                            l\in\NN\setminus\{0\},\\
  \frac{\omega}{\rho}& \text{ if } \rho\geq 1.\\ 
 \end{cases}
\end{equation}

Then we prove a generalization of \cite[Theorem~1.1]{Azz24}:
\begin{Theo}[Theorem~\ref{sec:link-betw-spectr}]\label{sec:introduction-1}
Let $(M,\nabla)$ be a differential module $(\Hx, d_b)$. Let
$\phi_b:\AF\to \AF$, $T-c_b\mapsto (T-c_b)^p$, and we set $y^{p^l}$ to
be $\phi^l_b(y)$. We set $N=\deg(\Psi_b)$.
\begin{enumerate}

\item There exist $z_1,\cdots,z_{\nu}\in \AF\setminus F$ and
 $a_1,\cdots, a_\mu\in F$, such that
 \begin{equation}
  \Sigma_{\nabla,F}(\Lk{M})=\{z_1,\cdots,z_\nu, a_1,\cdots,
  a_\mu\}+\ZZ_p,
 \end{equation}
 where $z_i$ has the same type as $x$, and $(\nu,\mu)$ is not
 equal to $(0,0)$. 
\item We can choose $z_i$ and $a_j$ so that the set $\{z_1,\cdots,z_\nu, a_1,\cdots,
  a_\mu\}$ has minimal cardinality. Indeed it is enough to keep only
  $z_i$ and $a_j$ for which we have $\{z_i\}+\ZZ_p\cap
  \{z_{i'}\}+\ZZ_p=\emptyset$ and $\{a_j\}+\ZZ_p\cap \{a_{j'}\}+\ZZ_p=\emptyset$
  for $i\ne i'$ and $j\ne j'$.
 
 \item We choose $\{z_1,\cdots,z_\nu, a_1,\cdots,
  a_\mu\}$ to be minimal. Then we have a unique (up to an isomorphism) decomposition:
  \begin{equation}\label{eq:92}
   (M,\nabla)=\bigoplus_{i=1}^{\nu}(M_{z_i},\nabla_{z_i})\oplus \bigoplus_{j=1}^{\mu}(M_{a_j},\nabla_{a_j}),
  \end{equation}
  where $(M_{z_i},\nabla_{z_i})$ and $(M_{a_j},\nabla_{a_j})$ are
  differential modules over $(\Hx,d_b)$, such that $\Sigma_{\nabla_{z_i},F}(\Lk{M_{z_i}})=\{z_i\}+\ZZ_p$
  and $\Sigma_{\nabla_{a_j},F}(\Lk{M_{a_j}})=\{a_j\}+\ZZ_p$.
 \item For all $a\in F$, the differential module
  $(M_{z_i},\nabla_{z_i}-a)$ (resp. $(M_{a_j},\nabla_{a_j}-a)$) is
  pure, i.e all its spectral radii coincide with each other.
  \item Let $c_i\in F$ and $r_i>0$ be such that $z_i=x_{c_i,r_i}$. If
  $|p|^{l}\leq r_i< |p|^{l-1}$ with $l\in \NN\setminus\{0\}$, then $\car(\{z_i\}+\ZZ_p)=p^l$ and $\{z_i\}+\ZZ_p=\{x_{c_i,r_i},
  x_{c_i+1,r_i},\cdots, x_{c_i+p^l-1,r_i}\}$. If
  $r_i\geq1$, we have $\car(\{z_i\}+\ZZ_p)=1$ and
  $\{z_i\}+\ZZ_p=\{x_{c_i,r_i}\}$.
 \item If $r_i>1$, let $P_{z_i}(N(T-c_b)\dT)$ be a differential polynomial
  associated to $(\Psi_b)_*(M_{z_i},\nabla_{z_i})$ (as a differential
  module over $(\h{\Psi_b(x)},N(T-c_b)\dT)$). Then the image by
  $\pi_{\widehat{\h{\Psi_b(x)}^{alg}}/F}$ of all roots of $P_{z_i}(S)$ (as a
  commutative polynomial) is equal to $z_i$. 
 \item If $|p|^l<r_i\leq|p|^{l-1}$, let $P_{z_i}(Np^l(T-c_b)\dT)$ be a differential
  polynomial associated to $(\phi_b^l\circ\Psi_b)_*(M_{z_i},\nabla_{z_i})$ (as a differential
  module over $(\h{\Psi_b(x)^{p^l}},Np^l(T-c_b)\dT)$). Then the image by
  $\pi_{\widehat{\h{\Psi_b(x)^{p^l}}^{alg}}/F}$ of all roots of $P_{z_i}(S)$ (as a
  commutative polynomial) is equal to \[\{x_{c_i,r_i},
   x_{c_i+1,r_i},\cdots, x_{c_i+p^l-1,r_i}\}.\]
  In the special case
  where $r_i=|p|^{l-1}$ we have \[\{x_{c_i,r_i},
  x_{c_i+1,r_i},\cdots, x_{c_i+p^l-1,r_i}\}=\{x_{c_i,r_i},
  x_{c_i+1,r_i},\cdots, x_{c_i+p^{l-1}-1,r_i}\}.\]

 \item If $r_i\geq 1$, for all $a\in \disf{c_i}{r_i}\cap F$ we have
  \begin{equation}
   \RS_1(x,(M_{z_i},\nabla_{z_i}-a))=\frac{\omega}{r_i}
  \end{equation}
  and for all $a\in F\setminus \disf{c_i}{r_i}$
  \begin{equation}
   \RS_1(x,(M_{z_i},\nabla_{z_i}-a))=\cR(\delta({a-c_i}))=\frac{\omega}{|a-c_i|}.
  \end{equation}
 \item If $|p|^l\leq r_i<|p|^{l-1}$, for all $a\in
  \bigcup_{j=0}^{p^l-1} \disf{c_i+j}{r_i}\cap F$ we have
  \begin{equation}
   \RS_1(x,(M_{z_i},\nabla_{z_i}-a))=\left(\frac{|p|^l\omega}{r_i}\right)^{\fra{p^l}}
  \end{equation}
  and for all $a\in F\setminus \bigcup_{j=0}^{p^l-1} \disf{c_i+j}{r_i}$
  \begin{equation}
   \RS_1(x,(M_{z_i},\nabla_{z_i}-a))=\cR(\delta({a-c_i})).
  \end{equation}
 \item For all $a\in
  \{a_j\}+\ZZ_p$, $(M_{a_j},\nabla_{a_j}-a)$ is solvable (all its
  spectral radii are maximal), and for all $a\in
  F\setminus \{a_j\}+\ZZ_p$, we have $\RS_1(x,(M_{a_j},\nabla_{a_j}-a))=\cR(\delta({a-a_j}))$. 
\end{enumerate}
\end{Theo}

Even though, at a first look, the variation of the spectrum might
seems relatively uncontrolled, Proposition~\ref{sec:introduction}
allows to prove a kind of controllability through a locally finite graph. Let us clarify in which meaning the spectrum can be controlled by a
graph. We consider a
differential equation $(\F,\nabla)$ over $X$, let $x$ be a point of
type (2) or (3). Let $\mathscr{C}$ be a quasi-smooth algebraic
curve such that an affinoid neighborhood of $x$ is isomorphic to an
affinoid domain of $\mathscr{C}$ and $x$ is in the interior of
$\mathscr{C}$. For $b\in T_x\mathscr{C}$, let $d_b$ as \eqref{eq:A82},
and $\nabla_{d_b}:\F(x)\to\F(x)$ be the resulting connection after
choosing $d_b$. As previously mentioned, for distinct $b, b' \in T_x\mathscr{C}$, the spectra $\Sigma_{\nabla_{d_b},F}(\Lk{\F(x)})$ and $\Sigma_{\nabla_{d_{b'}},F}(\Lk{\F(x)})$ may differ. However, we prove that this
can happen
only on a locally finite graph of $X$. Indeed, let $\cS$ be a weak
triangulation on $X$. We set
$\cR_{\cS,1}(y,(\F,\nabla))\leq\cdots\leq \cR_{\cS,n}(y,(\F,\nabla))$
to be the radii of convergence of $(\F,\nabla)$, with respect to
$\cS$, at $y$. It just mean that they are the normalized radii of
convergence of $(\F,\nabla)$ at $y$ on $D(y,\cS)$, where $D(y,\cS)$ is the biggest open
disk containing $y$ in $X\setminus \Gamma_{\cS}$. Let $\Gamma$ be a
subgraph of $X$. We say that
 $\Gamma$ is a {\it controlling graph of the spectrum} of $(\F,\nabla)$
 with respect $\cS$, if and only if the
 following conditions are satisfied:
 \begin{enumerate}
 \item the points of $\Gamma$ are of type (2) or (3);
 \item $\Gamma_{\cS}\subset \Gamma$;
 \item $X\setminus\Gamma$ is a disjoint union of open disks;
 \item for each $D\subset X\setminus\Gamma$, let $x\in \Gamma$ with
  $\overline{D}=D\cup \{x\}$ (the topological closure), for all $y\in
  D$ not of type (1) or (4), we have
  \begin{equation}\label{eq:35}
\Sigma_{\nabla_{d_b},F}(\Lk{\F(y)})=\bigcup_{i=1}^\nu\{x_{0,\phi_i(y)}\}+\ZZ_p,
  \end{equation}
 with
  \begin{equation}\label{eq:36}
\phi_i(y)=
   \begin{cases}
    0& \text{ if } r_{\cS,F}(y)\leq\cR_{\cS,i}(x,(\F,\nabla)),\\
    \frac{|p|^l\omega\cdot r_{\cS,F}(y)^{p^l}}{\cR_{\cS,i}(x,(\F,\nabla))^{p^l}} & \text{ if }
                         \omega^{\fra{p^{l-1}}}\cdot
                         r_{\cS,F} (y)\leq
                        \cR_{\cS,i}(x,(\F,\nabla))\leq
                         \omega^{\fra{p^{l}}}\cdot
                         r_{\cS,F} (y),
                         \text{ with }
                         l\in\NN\setminus\{0\},\\
    \frac{\omega\cdot r_{\cS,F} (y)}{\cR_{\cS,i}(x,(\F,\nabla))} &
    \cR_{\cS,i}(x,(\F,\nabla))\leq \omega \cdot r_{\cS,F} (y).\\
   \end{cases}
  \end{equation}
 \end{enumerate}
Where $r_{\cS,F}(y)$ is defined as follows, if $y\in \Gamma_{\cS}$ we
have $r_{\cS,F}(y)=1$, and
if $y\not\in \Gamma_{\cS}$ we have $r_{\cS,F}(y)=\frac{r(y)}{R(y,\cS)}$, where $r(y)$ is the radius of the point $y$ as an
 element of $D(y,\cS)$, and $R(y,\cS)$ is the radius of $D(y,\cS)$.

This definition means in particular, on $D\subset X\setminus\Gamma$,
for all $y\in D$, the spectrum does not depends on $b\in
T_y\mathscr{C}$ nor on the étale morphism $\Psi_b$. Moreover, the spectrum
is continuous on $y$, even if $d_b\ne d_{b'}$ for $b\ne b'$. It also
means that, if we know the radii of convergence on $x$, where
$\overline{D}=D\cup \{x\}$, then we can deduce the value of the
spectrum on each point of type (2) or (3) of $D$.

Since formulas \eqref{eq:35} and \eqref{eq:36}, and Theorem~\ref{sec:introduction-1} imply that the radii
of convergence $\cR_{\cS,i}(.,(\F,\nabla))$ are constant on
$D\subset X\setminus \Gamma$, the controlling graph
$\Gamma(\cR_{\cS,\F})$ of the radii of
convergence $\cR_{\cS,i}(. ,(\F,\nabla))$, with respect to $\cS$,
is the smallest controlling graph of the spectrum with respect to
$\cS$. We precise that the graph $\Gamma(\cR_{\cS,\F})$ satisfies the
following, for all $x\in \Gamma(\cR_{\cS,\F})$, $x$ does not admits a
neighborhood $D$ in $X$ such that:
\begin{itemize}
\item $D$ is a disk;
\item $\cR_{\cS,i}(.,(\F,\nabla))$ are all constant on $D$;
\item $D\cap \Gamma_{\cS}=\emptyset$. 
\end{itemize}

By the work of
J. Poineau and A. Pulita \cite{and,np2}, the graph $\Gamma(\cR_{\cS,\F})$ is locally
finite. Therefore the spectrum admits a locally finite controlling
graph with respect to $\cS$. In this paper we study the spectrum only on
points of type (2) and (3), which is not a problem since $\Gamma(\cR_{\cS,\F})$ does not contain points
of type (1) or (4) (see \cite[Théorème
IV]{Lut37}, \cite[Theorem~4.5.15]{ked13}).

As we mentioned before the value of the spectrum depends on the  
choice of $b$, in particular on $b\in T_x\Gamma\cup (T_x\cC\setminus
T_xX)$. Therefore, decomposition~\eqref{eq:92} also depends on the
choice of $b$. Hence, by considering the decomposition with respect to
all these directions, we may obtain a finer decomposition. We illustrate
this by the following simple example.

\begin{exam}\label{sec:introduction-3}
 Suppose that $X=\disf{0}{R}\setminus (\diso{1}{1}\cup
 \diso{c}{r}\cup\diso{0}{1})$, with $R>1$, $r<1$, and $|c|=1$. We consider
 $(M,\nabla)$ the differential module over $(\h{x_{0,1}},\dT)$ defined
 as follows:
 \begin{equation}
  \nabla
  \begin{pmatrix}
   f_1\\f_2\\f_3
  \end{pmatrix}=
  \begin{pmatrix}
   \dT f_1\\ \dT f_2\\ \dT f_3
  \end{pmatrix}+
  \begin{pmatrix}
   \frac{a_1}{T}&1&0\\
   0&\frac{a_2}{T-1}& 1\\
   0&0&\frac{a_3}{T-c}\\
  \end{pmatrix}
 \end{equation}
 with $a_1$, $a_2$, $a_3\in F$, $|a_1|=|a_2|=|a_3|>1$, and $a_i- a_j\not\in \ZZ_p$ for $i\ne j$. We set $b_0=(0,x_{0,1})$,
 $b_1=(1,x_{0,1})$, and $b_c=(c,x_{0,1})$. Then we have
 $\Sigma_{\nabla_{d_{b_0}},F}(\Lk{M})=\{x_{0,|a_1|}\}\cup \{a_1\}+\ZZ_p$,
 $\Sigma_{\nabla_{d_{b_1}},F}(\Lk{M})=\{x_{0,|a_1|}\}\cup\{a_2\}+\ZZ_p$, and
 $\Sigma_{\nabla_{d_{b_c}},F}(\Lk{M})=\{x_{0,|a_1|}\}\cup\{a_3\}+\ZZ_p$. In this
 situation $(M,\nabla)$ is a pure differential module, i.e. all the
 radii of convergence are equal, which means that we cannot decompose
 the differential module with respect to the radii of
 convergence. If we decompose with respect to one direction then
 $(M,\nabla)$ decomposes into a direct sum of two differential
 module. However, form the value $\Sigma_{\nabla_{b_0},F}(\Lk{M})$,
 $\Sigma_{\nabla_{b_1},F}(\Lk{M})$ and $\Sigma_{\nabla_{b_c},F}(\Lk{M})$, we can conclude
 that $(M,\nabla)$ decomposes into a direct sum of differential
 modules of rank 1. For the computation of the spectrum we use \cite[Theorem~4.23]{Azz24}.
\end{exam}
We precise that the decomposition~\eqref{eq:92} depends also on the choice
of $\Psi_b$. Fortunately, this dependency serves to refine the
decomposition further. In a future work, we plan to compute the spectrum with respect to all
derivations of the form $d_{b,l}=N\cdot f_b^l\cdot\Psi_b^*(\dT)$ with
$l\in \ZZ$ and $b$ is a missing branch or lying in the controlling graph, and
proving a decomposition theorem as in
Theorem~\ref{sec:introduction-1}. We suspect that the resulting decomposition will not
depend on $\Psi_b$.

Recall that there exists another kind of decomposition called {\it Refined decomposition}, we refer to
Section~\ref{sec:relat-with-refin-1} or \cite[Theorem~10.6.7]{Ked} for
a precise definition, this decomposition provides a decomposition finer
than the one provided by the spectral radii of convergence. By comparing the
refined decomposition with the decomposition with respect to the spectrum,
we found out that the refined decomposition is neither finer nor coarser than the one provided
by~\eqref{eq:92} with respect to all branches $b\in T_x\Gamma\cup (T_x\cC\setminus T_xX)$. 

We point out that Dwork-Robba's decomposition, which provides a local
decomposition with respect to the spectral radii of convergence on a
point of type (2), (3) or (4) of a quasi-smooth curve (cf. {\cite[First thm. of
 Section~4]{DR77},\cite[Theorem~4.1.1]{np3}}), it generalizes as
follows. Let $\cS$ be a weak triangulation of a quasi-smooth curve
$X$. Let $x\in X_{[2,3]}$ and $\mathscr{C}$ be a quasi-smooth algebraic
curve such that an affinoid neighborhood of $x$ is isomorphic to an
affinoid domain of $\mathscr{C}$ and $x$ is in the interior of
$\mathscr{C}$. Let $b$, $b'\in T_x\mathscr{C}$, then
there exists $g\in \Hx$ such that $d_{b'}=gd_b$. For $(M,\nabla_b)$ a differential module over
$(\Hx,d_b)$, we set $\nabla_{b'}=g\cdot\nabla_{b}$. Let $(\F,\nabla)$ be a
differential equation over $X$, let $\Gamma(\cR_{\cS,\F})$
be the controlling graph of radii of convergence of $(\F,\nabla)$, with respect to
$\cS$. Then we have $T_x\Gamma(\cR_{\cS,\F})\cup (T_x\mathscr{C}\setminus T_x
X)=\{b_1,\cdots,b_{N_x}\}$, we fix $b\in T_xX\setminus T_x\Gamma(\cR_{\cS,\F})$.

\begin{Theo}[{Theorem~\ref{sec:dwork-robb-decomp-1}}]\label{sec:introduction-5}
Let $(M_x,\nabla_x)$ be a differential module over $(\cO_{X,x},d_b)$, with $(M_x,\nabla_x)=(\F_x,\nabla_{d_b})$. For each $b_i\in \{b_1,\cdots,b_{N_x}\}$, let $\Sigma_{b_i}$ be a subset of $\AF$ such that for all $z, z'\in \Sigma_{b_i}$ with $z\ne z'$, we have $\{z\}+\ZZ_p\cap \{z'\}+\ZZ_p=\emptyset$ and $\Sigma_{(\nabla_x){b_i},F}(\Lk{M_x\ct_{\cO_{X,x}}\Hx})=\Sigma_{b_i}+\ZZ_p$. We set $\Sigma:=\prod_{i=1}^{N_x}\Sigma_{b_i}$. For $v\in \Sigma$, we have $v=(v_1,\cdots, v_{N_x})$, where $v_i \in \Sigma_{b_i}$ for $i=1,\cdots,N_x$. Then
\begin{equation}
(M_x,\nabla_x)=\bigoplus_{v\in\Sigma}(M_{x,v},\nabla_{x,v}),
\end{equation}
such that if $M_{x,v}\ne 0$, then each sub-quotient of $(M_{x,v}\ct\Hx,(\nabla_{x,v})_{b_i})$ has spectrum equal to $\{v_i\}+\ZZ_p$.
\end{Theo}

In this paper, we also study another kind of variation of the
spectrum, which consists in studying the continuity of the map
\begin{equation}
 \label{eq:97}
 \Fonction{\Sigma_{.,F}(E):E}{\cK(\AF)}{f}{\Sigma_{f,F}(E),}
\end{equation}
where $E$ is a $F$-Banach algebra, and $\Sigma_{f,F}(E)$ is the
spectrum of $f$ as an element of $E$, in the sense of Berkovich \cite[Chapter~7]{Ber}. We observe by an example
(cf. Example~\ref{sec:cont-spectr-an-1}) that in general this map is not
continuous. However, we prove that if $\Sigma_{f,F}(E)$
is totally discontinuous, then $\Sigma_{.,F}(E)$ is continuous at $f$.

Since the above results prove that the spectrum of every differential equation is totally disconnected, we are able to derive from \eqref{eq:97} the continuity of the variation of the spectrum $\Sigma_{\nabla,F}(\Lk{M})$ with respect to the connection. In other words, when a connection on $M$ is approximated by another, then so do the respective spectra. More specifically, the main result can be succinctly stated as follows:

\begin{Theo}[Theorem~\ref{sec:link-betw-spectr-2}, see Remark~\ref{sec:link-betw-spectr-5}]\label{sec:introduction-2}
Let $x\in \Xh$ be a point of type (2) or (3), $(M,\nabla)$ be a
 differential module over $(\Hx,d_b)$, and $\{(M,\nabla_l)\}_{l\in
  \NN}$ be a family of differential modules over $(\Hx,d_b)$ such
 that $\nabla_l\to \nabla$ in $\Lk{M}$. Let
 $z_1,\cdots,z_\nu$ be points of type (2) or (3) and $a_1,\cdots,
 a_\mu\in F$ be such that
 \begin{equation}
  \Sigma_{\nabla,F}(\Lk{M})=\{z_1,\cdots,z_\nu,a_1,\cdots, a_\mu\}+\ZZ_p,
 \end{equation}
 with $\{a_i\}+\ZZ_p\cap \{a_j\}+\ZZ_p=\emptyset$ (resp. $\{z_i\}+\ZZ_p\cap
 \{z_j\}+\ZZ_p=\emptyset$) if $i\ne j$, and $(\nu,\mu)\ne (0,0)$. Then there exists $l_0\in \NN$
 such that for all $l\geq l_0$ we have
 \begin{equation}
  (M,\nabla_l)=\bigoplus_{i=1}^\nu(M_{l,z_i},\nabla_{l,z_i})\oplus \bigoplus_{k=1}^\mu(M_{l,a_k},\nabla_{l,a_k}),
 \end{equation}
 where
 \begin{enumerate}
 \item $\dim_{\Hx} M_{l,z_i}=\dim_{\Hx} M_{z_i}$ and $\dim_{\Hx} M_{l,a_k}=\dim_{\Hx}
  M_{a_k}$,
 \item $\Sigma_{\nabla_{l,z_i}}=\{z_i\}+\ZZ_p$,
 \item $\forall \epsilon>0$, $\exists l_\epsilon\geq l_0$, $\forall
  l\geq l_\epsilon\implies
  \Sigma_{\nabla_{l,a_k}}\subset\bigcup_{j\in \NN}\diso{a_k+j}{\epsilon}$. 
 \end{enumerate}
\end{Theo}

From Theorems~\ref{sec:introduction-1} and \ref{sec:introduction-2},
we deduce easily that for each $i$ we have $\RS_i(x,(M,\nabla_l))\to
\RS_i(x,(M,\nabla))$. Moreover, if $\RS_i(x,(M,\nabla))<1$, then there
exists $l_0\in\NN$ such that for all $l\geq l_0$ we have
$\RS_i(x,(M,\nabla_l))=\RS_i(x,(M,\nabla))$.
This is a quite remarkable fact, because it can be used to prove general results about the radii of convergence by approximating the connection. For instance, in the affine line, we may approximate a connection by a sequence of connections whose coefficients are rational fractions.

The paper is organized as follows. In Section~\ref{sec:preliminaries}, we set
notations and recall all
the necessary definitions and results. In
Section~\ref{sec:cont-spectr-an}, we study the continuity of the
spectrum of some ultrametric operators. Section
\ref{sec:variation-spectrum-f} is divided into four parts. In the
first part we prove Proposition~\ref{sec:introduction}. In the second
part, we prove Theorem~\ref{sec:introduction-2} in the
case of the analytic line. In the third part we use
Theorem~\ref{sec:introduction-2} and Proposition~\ref{sec:introduction} to prove
Theorem~\ref{sec:introduction-1}. In the last part, we explain how the controlling graph of the
radii of convergence, with respect to a weak triangulation, controls
the spectrum. In the last section, on the one hand, we prove Theorem~\ref{sec:introduction-5} the spectral version of Dwork-Robba decomposition, which
provides a finer decomposition than the original one. On the other hand, we explain
in more detail why the refined decomposition is neither finer nor coarser then the one provided by the spectrum.  

{\bf Acknowledgments.} We express our gratitude to Andrea Pulita and Jérôme Poineau for answering our
various questions, for their suggestions and helpful discussions. We
also thank Antoine Ducros, Francesco Baldassarri and Hansheng Diao for
their suggestion to improve the writing. We are also grateful to
Sébastien Palcoux and Andrew Best for correcting our writing.\\
The author was supported by NSFC Grant \#12250410239.
\tableofcontents

\section{Preliminaries}\label{sec:preliminaries}
Throughout this paper all the rings are supposed to have a unit element. We denote by
$\RR$ the field of real numbers, by $\ZZ$ the ring of integers and by
$\NN$ the set of natural numbers containing $0$. We set $\R+$ for $\{r\in \RR;\;
r\geq 0\}$ and $\R+^{\times}$ for $\R+\setminus \{0\}$.

We fix $(F,|.|)$ to be an ultrametric complete field of characteristic
0. Let $|F|$ be $\{|a|;\; a\in F \}$. Let $E(F)$ be the category whose
objects are complete ultrametric field $(\Omega,|.|_\Omega)$, with $\Omega$ is a field extension
of $F$ and $|.|_\Omega$ extends $|.|$, and whose arrows are isometric
ring morphisms. For $(\Omega,|.|_\Omega)\in E(F)$, let
$\Omega^{alg}$ be an algebraic closure of $\Omega$, the absolute value
extends uniquely to $\Omega^{alg}$. We denote by $\wac$ the completion
of $\Omega^{alg}$ with respect $|.|_\Omega$. 

\subsection{Linear differential equations over a quasi-smooth curve}
In this paper we consider $F$-analytic spaces in the sense of
Berkovich (see \cite{Ber}). We denote by $\AF$ (resp. $\Pf$) the
analytic affine (resp. projective)
line over the ground field, with coordinate $T$. Let $(X, \cO_X)$ be an analytic
space, for any $x\in X$, the residue field of the local ring $\cO_{X,x}$
is naturally valued and we denote by $\Hx$ its completion.

Let $\Omega\in E(F)$ and $c\in\Omega$. Let $I$ be a non-empty interval
of $\R+$ and $r\in \R+^{\times}$ we set

\begin{equation}
\DI{\Omega}{I}(c):=\{x\in \A{\Omega}; \; |T(x)-c|\in I\} 
\end{equation}
We set $\disco{\Omega}{c}{r}:=\DI{\Omega}{[0,r)}(c)$ (resp. $\discf{\Omega}{c}{r}:=\DI{\Omega}{[0,r]}(c)$) and call it
open (resp. closed) disc. Denote by $x_{c,r}$ the unique point in the Shilov boundary of
$\discf{\Omega}{c}{r}$.\\
If $0\not\in I$ we say that $\DI{\Omega}{I}(c)$ is an annulus of type $I$, if moreover
$I$ is open
(resp. closed, resp. semi-closed) we say that $\DI{\Omega}{I}(c)$ is open
(resp. closed, resp. semi-closed). We set
$\Couo{\Omega}{c}{r_1}{r_2}:=\DI{\Omega}{(r_1,r_2)}(c)$ and $\Couf{\Omega}{c}{r_1}{r_2}:=\DI{\Omega}{[r_1,r_2]}(c)$. We drop the subscript $\Omega$ when no confusion is possible.
 \begin{Defi}\label{sec:type-points-a_k}
  Let $x\in \AF$ and $y\in\pik{\widehat{F^{alg}}}\-1(x)$. We define
  the radius of $x$ to be the value:
  \begin{equation}
   \label{eq:60}
  r_F(x)=\inf_{c\in F^{alg}}|T(y)-c|. 
 \end{equation}

  It does not depend on the choice of $y$. We drop $F$ when no
  confusion is possible. 
 \end{Defi}

 \begin{rem}
  Assume that $F$ is algebraically closed. For a point
  $x_{c,r}$ of type (1), (2) or (3), we have $r_F(x_{c,r})=r$.
 \end{rem}

Assume that $F$ is algebraically closed. Let $c\in F$. The following
map
\begin{equation}\Fonction{[0,+\infty)}{\AF}{r}{x_{c,r}}\end{equation}
induces a homeomorphism between $[0,+\infty)$ and its image.

\begin{nota}\label{sec:type-points-a_k-1}
 We denote by $[x_{c,r},\infty)$ (resp. $(x_{c,r},\infty)$) the image of
 $[r,+\infty)$ (resp. $(r,+\infty)$), by $[x_{c,r},x_{c,r'}]$
 (resp. $(x_{c,r},x_{c,r'}]$, $[x_{c,r},x_{c,r'})$, $(x_{c,r},x_{c,r'})$) the image
 of $[r,r']$ (resp. $(r,r']$, $[r,r')$, $(r,r')$).
\end{nota}

\begin{Defi}
 Let $\dI(c)$ be an annulus over $F$. We set
 \begin{equation}
  \label{eq:3}
  \text{Mod}(\dI(c))=\frac{\inf I}{\sup I}\quad \text{and} \quad
  \ell(I)=\log \text{Mod}(\dI(c))
 \end{equation}
\end{Defi}

\begin{Pro}\label{sec:line-diff-equat}
 Let $I$ and $J$ be bounded intervals in $\R+^{\times}$ and $c$, $c'\in
 F$. 
 \begin{enumerate}
 \item If $0\in I\cap J$, then $\di{I}(c)\simeq \di{J}(c')$ if and
  only if $I\simeq J$ and $\frac{\sup I}{\sup J}\in |F^{\times}|$.
  \item If $0<\inf I$, $\inf J$, then $\di{I}(c)\simeq \di{J}(c')$ if and
  only if $I\simeq J$, $\frac{\sup I}{\sup J}\in |F^{\times}|$
  and $\rm{Mod}(\di{I}(c))= \rm{Mod}(\di{J}(c'))$. 
 \end{enumerate}
\end{Pro}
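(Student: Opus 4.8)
The plan is to bring both domains into normal form using the obvious automorphisms of $\A{F}$ and then read off invariants from the ring of analytic functions. Since $T\mapsto T-c$ gives an isomorphism $\di{I}(c)\xrightarrow{\sim}\di{I}(0)$, I may assume $c=c'=0$. \emph{Sufficiency:} suppose $\sup I/\sup J\in|F^{\times}|$ and pick $\lambda\in F^{\times}$ with $|\lambda|=\sup J/\sup I$. The scaling $T\mapsto\lambda T$ is an isomorphism of $\di{I}(0)$ onto $\di{|\lambda|I}(0)$, where $|\lambda|I:=\{|\lambda|\rho:\rho\in I\}$ is an interval with the same topological type as $I$ and with $\sup(|\lambda|I)=\sup J$. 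In the disc case the infimum vanishes on both sides, so $\di{|\lambda|I}(0)=\di{J}(0)$ as soon as $I\simeq J$ (the modulus condition being vacuous here). In the annulus case, $\inf(|\lambda|I)=\sup J\cdot\mathrm{Mod}(\di{I}(0))=\sup J\cdot\mathrm{Mod}(\di{J}(0))=\inf J$ by equality of moduli, so $|\lambda|I=J$ when the isomorphism $I\simeq J$ preserves orientation; if it reverses orientation I first apply an inversion $T\mapsto\mu/T$ with a suitable $\mu\in F^{\times}$ — this sends $\di{I}(0)$ onto the annulus $\{\,|T|\in\{|\mu|\rho^{-1}:\rho\in I\}\,\}$, exchanging the two ends and keeping the modulus — and then reduce to the orientation-preserving case.

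\emph{Necessity:} let $\phi:\di{I}(0)\xrightarrow{\sim}\di{J}(0)$ be an isomorphism and set $u:=\phi^{\#}(T)\in\cO(\di{I}(0))$. In the disc case, composing $\phi$ with a translation of the target I may assume $u(0)=0$; in the annulus case the image being an annulus centred at $0$ forces the dominant exponent of $u$ to be $\pm1$, and after replacing $T$ by $T^{-1}$ if necessary I may assume it is $+1$. In either case the requirement that $\phi$ be bijective with analytic inverse forces $u$ to be of the form $a_{1}T\cdot(1+g)$ with $a_{1}\in F^{\times}$ and $\|g\|<1$ — equivalently, the valuation polygon $\rho\mapsto\log|u|_{x_{0,\rho}}$ is a single segment of slope $1$. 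Hence $u$ identifies $\di{I}(0)$ with $\di{|a_{1}|I}(0)$, and comparison with $\di{J}(0)$ yields $J=|a_{1}|I$; this gives all three asserted conditions at once: $I\simeq J$ (scaling preserves the topological type of the interval), $\sup I/\sup J=|a_{1}|^{-1}\in|F^{\times}|$, and, for annuli, $\inf I/\sup I=\inf J/\sup J$, i.e.\ $\mathrm{Mod}(\di{I}(0))=\mathrm{Mod}(\di{J}(0))$. The last two facts also admit intrinsic proofs: an isomorphism carries the skeleton of an annulus — the canonical set of points having no neighbourhood isomorphic to an open disc — isometrically onto the skeleton of the other, and the length of this metric interval is $-\log\mathrm{Mod}$ while the status (attained or not) of its endpoints records the topological type; in the disc case compactness and the relative boundary already separate the two possible types.

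The step I expect to be the main obstacle is the structural input just used: that an analytic isomorphism between two such domains is, up to translation and inversion, multiplication by a scalar in $F^{\times}$ — equivalently, that the valuation polygon of $\phi^{\#}(T)$ is a single segment of unit slope. This rests on the standard theory of analytic functions on discs and annuli over a non-archimedean field: the description of $\cO(\di{I}(0))$ by convergent (Laurent) series, the piecewise linearity with integer slopes of the valuation polygon, Weierstrass preparation/factorisation, and the computation of the image of a disc or annulus under an analytic map in terms of those slopes; a bend, a non-integer slope, or a slope of absolute value $>1$ would each contradict bijectivity of $\phi$ on some circle $x_{0,\rho}$. All of this is classical; see \cite{Ber}, and \cite{Duc} for the quasi-smooth curve framework. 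Granting it, the remaining verifications — that translations, scalings and inversions act on the data $(I,c)$ as described, and that equality of moduli is the only further constraint on the inner radius — are routine computations with ultrametric absolute values.
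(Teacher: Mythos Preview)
The paper states this proposition without proof; it is recorded as a standard classification fact about discs and annuli over a non-archimedean field, and the paper only uses it (via the subsequent remark and the definition of the canonical map $\imath_C$) rather than establishing it. So there is no ``paper's own proof'' to compare against.

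Your argument is the standard one and is essentially correct: translate to centre $0$, then show that any isomorphism is, up to translation and possibly inversion, a scaling by an element of $F^{\times}$. The sufficiency direction is routine. For necessity, the structural input you isolate --- that the valuation polygon of $\phi^{\#}(T)$ is a single segment of slope $\pm1$ --- is precisely what is needed, and you point to the right sources (Laurent expansion, Weierstrass preparation, and the description of images of discs/annuli via the Newton polygon). The intrinsic argument via the skeleton is also a clean alternative for the annulus case.

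One minor point worth flagging: in the orientation-reversing annulus case your inversion $T\mapsto\mu/T$ followed by a scaling requires $(\sup I)(\inf J)\in|F^{\times}|$, which is not literally the same as $\sup I/\sup J\in|F^{\times}|$ unless one also has, say, $(\sup J)(\inf J)\in|F^{\times}|$. Whether this case is even intended depends on whether ``$I\simeq J$'' is read as order-isomorphism or mere homeomorphism; the paper does not say. In the paper's applications $F$ is algebraically closed, so $|F^{\times}|$ is divisible and the distinction evaporates; your proof is adequate for those purposes.
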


\begin{rem}
 This means for some $\Omega\in E(F)$, the annulus $\DI{\Omega}{I}(c)$
 is isomorphic to $\DI{\Omega}{J}(c)$, with $I\simeq J$, $\inf
 J=\text{Mod}(\dI(c))$ and $\sup J=1$.
\end{rem}

\begin{Defi}
 We say that a non-empty $F$-analytic space $X$ is a {\it virtual
  disk} (resp. {\it virtual annulus}) if $X\ct_F\kac$ is isomorphic
 to a union of disks (resp. annuli whose orientation is preserved by
 $\rm{Gal}(F^{alg}/F)$ over $\kac$ (see \cite[3.6.32 and
 3.6.35]{Duc})).
\end{Defi}
\subsubsection{The shape of a quasi-smooth curve}\label{sec:shape-quasi-smooth-1}
\begin{Defi}
 We say that an $F$-analytic space $(X,\cO_X)$ is a quasi-smooth curve if
 its respective sheaf of Kähler differentials forms $\Omega_{X/F}$ is locally
 free of rank 1. 
\end{Defi}

\begin{conv}
 In the whole section, $X$ is a connected quasi-smooth $F$-analytic curve. We set
 $X^{\rm{hyp}}$ (resp. $X_{[i]}$, resp. $X_{[2,3]}$) to be the set of
 points of $X$ not of type (1) (resp. of type (i), resp. of type (2)
 or (3)).
 
\end{conv}
\begin{Defi}
A subset $\cS$ of $X$ is a {\it weak
  triangulation} (resp. {\it triangulation}) of $X$ if
 \begin{enumerate}
 \item $\cS$ is locally finite and only contains points of type (2) and
  (3);
 \item any component of $X\setminus \cS$ is (resp. relatively
  compact) isomorphic to virtual open disk or annulus.
 \end{enumerate}
\end{Defi}

\begin{Defi}
 Let $X$ be a quasi-smooth curve and $\cS$ a weak triangulation. The
 skeleton $\Gamma_S$ of $X$, with respect to $\cS$, is the subset of $X$ containing $\cS$ and
 every point of $X\setminus \cS$ that does not have a virtual open disk as a neighborhood.
\end{Defi}

\begin{exams}~\\
 \begin{itemize}
 \item for $\diso{c}{r}$ (resp. $\disf{c}{r}$) the set $\cS=\emptyset$
  (resp. $\cS=\{x_{c,r}\}$) is a minimal weak triangulation with
  $\Gamma_{\cS}=\emptyset$ (resp. $\Gamma_{\cS}=\{x_{c,r}\}$).
 \item for $\couo{c}{r_1}{r_2}$ (resp. $\couf{c}{r_1}{r_2}$) the set $\cS=\emptyset$
  (resp. $\cS=\{x_{c,r_1},x_{c,r_2}\}$) is a minimal weak
  triangulation with
  $\Gamma_\cS=(x_{c,r_1},x_{c,r_2})$ (resp. $\Gamma_\cS=[x_{c,r_1},x_{c,r_2}]$) 
 \end{itemize}
\end{exams}
The skeleton of a disk (resp. annulus) will be the skeleton with
respect to the minimal triangulation.
For a closed annulus $C$ of type $I_C=[r_1,r_2]$, we define the
homeomorphism map:
\begin{equation}
 \label{eq:9}
 \Fonction{\imath_{I_C}:\Gamma_C}{[\text{Mod}(C),1]}{x}{\fra{r_2}.r_F(x).}
\end{equation}
From Proposition~\ref{sec:line-diff-equat}, we can deduce that if
$\phi: C\to C'$ is an isomorphism, then
$\imath_{I_{C'}}\circ\phi_{|_{\Gamma_C}}=\imath_{I_C}$. Hence, we can defined a
canonical map $\imath_C:\Gamma_C\to [\text{Mod}(C),1]$.
\begin{rem}
 If $F$ is algebraically closed, the skeleton $\Gamma_\cS$ can be seen as the union of $\cS$ and the
 skeleta of the connected
 components $X\setminus \cS$ (which are open disks and annuli).
\end{rem}

\begin{rem}
 The skeleton $\Gamma_\cS$ does not contain a point of type (1) and (4),
 because such a point admits open disks as neighborhood basis. 
\end{rem}

\begin{Theo}[{\cite[Theorem 5.1.14]{Duc}}]
 Any quasi-smooth $F$-analytic smooth curve admits a triangulation,
 and hence a weak triangulation.
\end{Theo}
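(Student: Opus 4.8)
The plan is to build a triangulation by a local-to-global argument: first establish the elementary local structure of a quasi-smooth curve, then patch it over an affinoid exhaustion, and finally arrange the patching so that the complementary pieces are not merely virtual open discs and annuli but relatively compact ones. Two preliminary reductions: (i) by replacing $X$ with $X\ct_F\kac$, on which $\mathrm{Gal}(F^{alg}/F)$ acts, it suffices to produce a Galois-stable triangulation over $\kac$ — the orientation-preservation clause in the definition of virtual annulus is exactly the descent datum needed to pull it back to $X$ — so I may assume $F=\kac$ is algebraically closed (in particular semistable reduction is available with no further base change, and ``virtual disc/annulus'' means ``disc/annulus''); (ii) it suffices to produce a genuine triangulation, the weak one following a fortiori.

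The heart of the argument is the local structure theorem: every $x\in X$ has a neighbourhood $U_x$ of one of the elementary shapes — an open disc if $x$ is of type (1) or (4); an open annulus with $x$ on its skeleton if $x$ is of type (3); and, if $x$ is of type (2), a ``star'', i.e. a finite union of open discs and half-open annuli glued along $x$, so that $U_x\setminus\{x\}$ is a disjoint union of (finitely many) open annuli and open discs. For a \emph{compact} quasi-smooth curve this is essentially the existence of a semistable formal model $\mathfrak X$ (obtained, in the affinoid case, by normalising a formal model and invoking stable reduction of the ambient proper curve): the reduction map sends the generic points of the components of $\mathfrak X_s$ to finitely many type-(2) points, the preimage of a node is an open annulus and that of a smooth point an open disc; throwing in one type-(3) point on each free-end annulus, and, for an affinoid domain, its finitely many type-(2)/(3) boundary points, yields a \emph{finite} triangulation. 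So the compact case is done.

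For general $X$ I would exhaust $X=\bigcup_n Y_n$ by affinoid domains with $Y_n\subset\operatorname{int}(Y_{n+1})$, choose finite triangulations $\cS_n$ of $Y_n$, and arrange inductively — by first enlarging $\cS_n$ to a finite triangulation of $Y_{n+1}$ (each component of $Y_n\setminus\cS_n$ is a disc or annulus, hence has its own finite triangulation relative to the new boundary points) and only then adjoining, for each component of $Y_{n+1}\setminus Y_n$, a point of its relative boundary in $Y_{n+1}$ — that $\cS_n\subset\cS_{n+1}$ and that cutting along $\cS_{n+1}$ preserves the disc/annulus shape of the surviving pieces of $Y_n\setminus\cS_n$. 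Set $\cS=\bigcup_n\cS_n$. It consists of type-(2) and (3) points; it is locally finite because each point lies in some $\operatorname{int}(Y_n)$, where only the finite set $\cS_n$ can accumulate; each component $D$ of $X\setminus\cS$ sits inside some $Y_n\setminus\cS_n$, hence is an open disc or annulus, with $\overline D$ compact since we forced a boundary point into $\cS$ at every stage. This is the desired triangulation.

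The main obstacle is the local structure theorem at type-(2) points, i.e. semistable reduction of curves (equivalently Berkovich's description of the local analytic structure of curves); this is the one genuinely deep input, and everything after it is organisational. Within that organisational part the delicate point is the inductive compatibility of the $\cS_n$: one must enlarge each $\cS_n$ inside $Y_{n+1}$ \emph{before} adding the extra boundary points that force relative compactness, so that no previously-elementary piece gets subdivided into something that is no longer a disc or annulus — and one must keep the Galois descent data coherent through the exhaustion. Tracking these nestings is where the care is required.
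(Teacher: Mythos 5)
The paper does not prove this statement at all: it is quoted directly from Ducros' book (the citation is \cite[Theorem~5.1.14]{Duc}), and in this paper it functions purely as an external input. There is consequently no ``paper's own proof'' to compare your argument against; one can only measure your sketch against the argument as it actually appears in Ducros. On those terms your outline is a fair summary of the standard architecture, which is essentially the route Ducros follows: descent from $\kac$ to $F$ using the Galois-equivariance built into the notion of virtual disc/annulus; the local structure theorem (disc neighbourhoods at type (1) and (4), annulus at type (3), a finite ``star'' of discs and annuli at type (2), obtained ultimately from semistable reduction of a formal model); a finite triangulation of a compact curve by reading off the special fibre of a semistable model and adjoining Shilov boundary points; and then an affinoid exhaustion $X=\bigcup_n Y_n$ with $\cS=\bigcup_n\cS_n$, the $\cS_n$ built so as to nest and so that each surviving component of $Y_n\setminus\cS_n$ stays a disc or annulus while relative compactness is forced by inserting boundary points. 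You correctly flag the one genuinely deep input (semistable reduction / the local structure of curves) and the two genuinely delicate bookkeeping points (coherence of the successive enlargements of $\cS_n$, and keeping the Galois datum consistent through the exhaustion). As written it is a sketch rather than a proof --- those bookkeeping steps are asserted, not carried out, and the Galois-stability of the triangulation you build over $\kac$ would need to be arranged explicitly rather than merely invoked --- but no step you propose is wrong, and the plan is the right one.
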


The existence of a weak triangulation on $X$ induces the locally
uniquely arcwise connectedness of $X$, in other words $X$ is locally
simply connected graph. Hence, for any point $x\in X $ we can choose a
neighborhood $U$ such that for any point $y\in U$ the segment $[x,y]$
is well defined. We set
\begin{equation}
 \label{eq:2}
 T_xX:=\{ (y,U);\; y\in U\, \text{and } U\text{ is simply a connected
  neighborhood of }x\}/\sim,
\end{equation}
where $\sim$ is defined as follows
\begin{equation}
 \label{eq:8}
 (y,U)\sim (z,V) \; \text{if and only if } [x,y]\subset [x,z] \text{ or } [x,z]\subset [x,y].
\end{equation}
The set $T_xX$ represents the branches out of $x$. A representative of a
branch $b\in T_xX$ is called a {\it section} of $b$. For all
simply connected neighborhood $U$ of $x$, and each $b$ we can
find $y\in U$ such that $(y,U)$ is a section. Moreover, there exists a
bijection between $T_xX$ and the set of connected component of
$U\setminus \{x\}$. Hence, by saying that we can choose an annulus (
or its skeleton) as section of
$b$, we mean just find $U$ such that the corresponding connected component
of $b$ is an annulus. For a subset $\Gamma$ of $X$, and $x\in\Gamma$ we set
$T_x\Gamma$ to be the set of branches out $x$ admitting a section in
$\Gamma$ (i.e. $(y,U)$ such that $U\cap\Gamma$ is connected and $y\in U\cap \Gamma$).

\begin{Defi}
 We say that a subset $\Gamma$ is a {\it finite} (resp. {\it locally
  finite}) subgraph of $X$, if there exists a finite (resp. locally
  finite) family $\mscr{V}$ of simply connected affinoid
  domain of $X$ that covers $\Gamma$, and such that, for every $V\in \mscr{V}$, 
  the set $\Gamma\cap V$ is the convex hull of a finite number of points.
 \end{Defi}
 
 \begin{rem}
  If $\Gamma$ is a locally finite graph, then for all $x\in \Gamma$
  the set $T_x\Gamma$ is finite.
 \end{rem}
 
 \begin{Defi}
  Let $C$ be a closed annulus. We say that $f:\Gamma_C\to
  \R+^{\times}$ is {\it log-affine} (resp. {\it log-convex},
  resp. {\it log-concave}), if the map ${\log\circ f\circ
  \imath_C\-1\circ \exp:[\ell(\Gamma_C),0]\to \R+}$ is affine
 (resp. convex, resp. concave). 
\end{Defi}

\begin{Defi}
 Let $\Gamma$ be a locally finite subgraph of $X$. A map $f:\Gamma\to
 \R+^{\times}$ is said to be {\it piecewise log-affine} (resp. {\it log-convex},
  resp. {\it log-concave}) if $\Gamma$ may be
 covered by a locally finite family $\mscr{J}$ of closed intervals
 such that, for every $J\in\mscr{J}$ and each skeleton $I$ of a closed annulus lying in the
 topological interior of $J$, the restriction of $f$ to $I$ is
 log-affine (resp. log-convex, resp. log-concave).  
\end{Defi}
 
\begin{nota}
 Let $\Omega\in E(F)$. We set
 $X_\Omega=X\ct_F\Omega$. For every $\Omega'\in E(\Omega)$, we denote by
 $\piro{\Omega'}{\Omega}:X_{\Omega'}\to X_{\Omega}$ the
 canonical projection morphism. 
\end{nota}
\begin{Defi}\label{sec:shape-quasi-smooth-3}
 A point $x\in X$ is said to be {\it universal} if for every
 $\Omega\in E(F)$, the tensor norm on the algebra $\Hx\ct_F\Omega$ is
 multiplicative. In this case this norm defines a point in
 $\pik{\Omega}\-1(x)$ in $X_\Omega$ denoted by $x_\Omega$. 
\end{Defi}
\begin{Pro}[{\cite[Corollaire~3.14]{poi}}]
 If $F$ is algebraically closed, every point $x\in X$ is universal.
\end{Pro}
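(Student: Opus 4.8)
The plan is to forget the curve $X$ and reduce the statement to a property of the single complete valued field $\Hx$, then dispatch it according to the type of $x$, the hypothesis being used through the fact that the residue field $\rk$ is algebraically closed. Recall that $x$ is universal precisely when, for every $\Omega\in E(F)$, the tensor seminorm on $\Hx\ct_F\Omega$ is multiplicative; since this involves only $\Hx$ as an abstract complete valued extension of $F$, it suffices to establish multiplicativity of that seminorm for an arbitrary complete valued field $K/F$ that occurs as the completed residue field of a point of a quasi-smooth curve. I would use here the standard criterion (Berkovich's peakedness, or Temkin's graded reduction): if $K/F$ admits an $F$-orthogonal basis that remains orthogonal after every base change, then the tensor norm on $K\ct_F\Omega$ is multiplicative for all $\Omega$ as soon as the graded reduction $\mr{gr}(K)\otimes_{\mr{gr}(F)}\mr{gr}(\Omega)$ is an integral domain for all $\Omega\in E(F)$. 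The task thus becomes: verify this domain property for $K=\Hx$.

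I would then split according to the type of $x$. For type (1), $\Hx=F$ and $\Hx\ct_F\Omega=\Omega$ is a field, so there is nothing to prove. For type (2), one has $|\Hx^\times|=|F^\times|$ and $\widetilde{\Hx}$ is a finitely generated extension of $\rk$ of transcendence degree $1$; since $\rk$ is algebraically closed, $\widetilde{\Hx}$ is geometrically integral over $\rk$, hence $\widetilde{\Hx}\otimes_{\rk}R$ is a domain for every $\rk$-domain $R$, in particular for the (graded) reduction of any $\Omega$, and as the value group does not grow the graded tensor product reduces to exactly this situation. For type (3), $\widetilde{\Hx}=\rk$ and $|\Hx^\times|/|F^\times|$ is free of rank $1$; because $F$ is algebraically closed, $\Hx$ is the completion of $F(T)$ for a Gauss-type norm and $\{T^n\}_{n\in\ZZ}$ is a universal orthogonal basis, so $\mr{gr}(\Hx)\otimes_{\mr{gr}(F)}\mr{gr}(\Omega)\cong\mr{gr}(\Omega)[t,t^{-1}]$, a graded Laurent polynomial ring over a domain, hence a domain. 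Finally, for type (4) I would pass to a limit: such a point is cut out by a decreasing, non-stabilizing sequence of closed discs, so $\Hx$ is the completion of an increasing union $\bigcup_i\mathscr H(y_i)$ with each $y_i$ of type (2) or (3) and transition maps that are isometric and compatible with the chosen orthogonal bases; by the previous cases each $\mathscr H(y_i)\ct_F\Omega$ is a field, the induced maps $\mathscr H(y_i)\ct_F\Omega\hookrightarrow\mathscr H(y_{i+1})\ct_F\Omega$ stay isometric, their metric colimit is therefore a valued field, and $\Hx\ct_F\Omega$ is its completion, hence again a field.

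I expect the main obstacle to be the type (2) case: it needs both the precise form of the criterion relating multiplicativity of the tensor norm to integrality of the (graded) reduction, and the algebraic input that a finitely generated transcendence-degree-one extension of an algebraically closed field remains integral after any base change — which is exactly where the assumption "$F$ algebraically closed'' enters (in an essential way, as the rank-one example $\dT+\frac{a}{T(c-T)}$ discussed in the introduction already suggests that the non-algebraically-closed situation is genuinely more delicate). A secondary difficulty is the bookkeeping in the type (4) step, namely ensuring the transition maps remain isometric after applying $\ct_F\Omega$; this is handled by propagating the orthogonal bases through the tower, but it is the point at which one must be careful.
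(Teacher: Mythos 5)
The paper does not actually prove this proposition; it quotes it from Poineau (\cite[Corollaire~3.14]{poi}) without reproducing the argument, so there is no internal proof to compare against. Evaluating your proof on its own terms, the cases of type (1), (2) and (3) are essentially sound — the graded-reduction criterion is stated a bit loosely (the universal orthogonal basis hypothesis and the domain condition on the graded tensor product are not independent conditions the way you phrase them), but the use that $\rk$ being algebraically closed forces $\widetilde{\Hx}/\rk$ to be a regular extension, and that $|F^{\times}|$ being divisible keeps the graded reduction well behaved, is exactly the right input and does the job for $x$ of type (2) and type (3).

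The type (4) step, however, contains a genuine gap, and not a cosmetic one. If $x$ is cut out by a decreasing chain of closed discs $D_1\supset D_2\supset\cdots$ with Shilov points $y_i$, there is \emph{no} system of isometric inclusions $\hh{y_i}\hookrightarrow\hh{y_{i+1}}$. The underlying field $F(T)$ sits in both, but the norms $|\cdot|_{y_i}$ and $|\cdot|_{y_{i+1}}$ are different on $F(T)$; the identity only extends to a strictly norm-decreasing (not isometric) bounded map between the completions, so $\Hx$ is not the metric colimit you describe, and the argument that isometry propagates through $\ct_F\Omega$ collapses. There is a second symptom that the reasoning has gone wrong: the conclusion you draw, namely that $\Hx\ct_F\Omega$ ``is a field,'' is false in general even for the Gauss point — already there, $\widetilde{\Hx\ct_F\Omega}$ is of the form $\rk(\widetilde T)\ot_{\rk}\widetilde\Omega$, which is a domain but not a field, so $\Hx\ct_F\Omega$ is a Banach domain with multiplicative norm but has plenty of noninvertible elements of norm one. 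Multiplicativity of the tensor norm is strictly weaker than being a field, and only the former is what ``universal'' asks for. The type (4) case is actually the \emph{easiest} one for the criterion you invoke: since $\Hx/F$ is an immediate extension, $\mr{gr}(\Hx)=\mr{gr}(F)$, so $\mr{gr}(\Hx)\ot_{\mr{gr}(F)}\mr{gr}(\Omega)\simeq\mr{gr}(\Omega)$ is trivially a domain; what needs justifying (and what the tower was presumably meant to replace) is that the graded-reduction criterion really applies to an immediate transcendental extension without any auxiliary basis hypothesis — this is precisely the content of the Gruson/Berkovich/Temkin peakedness theory, and it should be cited rather than rederived by a nonexistent tower argument.
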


Let $\cS$ be a weak triangulation of $X$, we set $\cS_{\kac}:=\pik{\kac}\-1(\cS)$. It is easy to see that
$\cS_{\kac}$ is a weak triangulation and $\Gamma_{\cS_{\kac}}=\pik{\kac}\-1(\Gamma_\cS)$. Now
 let $\Omega\in E(F)$. We set
 \begin{equation*}
  \begin{array}{cc}
   \cS_{\wac}=\{x_{\wac}\in X_{\wac};\; x\in \cS_{\kac}\};&
   \Gamma_{\cS_{\wac}}=\{x_{\wac}\in X_{\wac};\; x\in
   \Gamma_{\cS_{\kac}}\}\\
   \cS_\Omega=\piro{\wac}{\Omega}( \cS_{\wac});&\Gamma_{ \cS_\Omega}=\piro{\wac}{\Omega}( \Gamma_{\cS_{\wac}}).\\
  \end{array}
 \end{equation*}
 
 \begin{Theo}[{\cite[Theorem~2.15]{np2}}]
  Let $x$ be a point of $X_{\kac}$ and let $\Omega\in E(\kac)$.
  \begin{enumerate}
  \item If $x$ is of type (i), where $i\in\{1,2\}$, then so is $x_{\Omega}$. If
   $x$ is of type (j), where $j\in\{3,4\}$, then $x_\Omega$ is of
   type (j) or (2).
  \item The fiber $\piro{\Omega}{\kac}\-1(x)$ is connected and if
   $\piro{\Omega}{\kac}\-1(x)\ne \{x_\Omega\}$ the connected
   components of $\piro{\Omega}{\kac}\-1(x)\setminus \{x_\Omega\}$ are open
   disks with boundary $\{x_\Omega\}$. Moreover they are open in $X_{\Omega}$.
  \end{enumerate}
 \end{Theo}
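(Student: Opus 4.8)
The plan is to reduce the statement to an explicit local description of the fiber of $\piro{\Omega}{\kac}$ over $x$. The starting point is the standard identification of such a fiber: if $V=\cM(\cA)$ is an affinoid neighbourhood of $x$ in $X_{\kac}$, then $\piro{\Omega}{\kac}^{-1}(x)$ is canonically homeomorphic to the Berkovich spectrum $\cM(\Hx\ct_{\kac}\Omega)$ of the completed residue field of $x$ extended to $\Omega$ (everything here is local, since $X_\Omega=X_{\kac}\ct_{\kac}\Omega$). Because $\kac$ is algebraically closed, every point of $X_{\kac}$ is universal (Corollaire~3.14 of \cite{poi}), so the tensor seminorm on $\Hx\ct_{\kac}\Omega$ is multiplicative; in particular it is a norm and $\Hx\ct_{\kac}\Omega$ is a Banach integral domain, whose spectrum is therefore connected — this already settles the connectedness claim. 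By construction $x_\Omega$ is the point attached to this multiplicative norm, and one checks directly from the definition of the tensor norm that $|f(y)|\leq|f(x_\Omega)|$ for every $f\in\Hx\ct_{\kac}\Omega$ and every $y$ in the fiber; thus $x_\Omega$ is a Gauss point dominating the whole fiber, and in particular it lies in the closure of every connected component of $\piro{\Omega}{\kac}^{-1}(x)\setminus\{x_\Omega\}$.

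It remains to describe that complement and the type of $x_\Omega$, and I would proceed by cases on the type of $x$, each time replacing $\Hx$ by an explicit model. If $x$ is of type~(1) then $\Hx=\kac$, so $\Hx\ct_{\kac}\Omega=\Omega$ and the fiber is the single point $x_\Omega$ with $\mathscr{H}(x_\Omega)=\Omega$, again of type~(1). If $x$ is of type~(3) then over $\kac$ it has an open annulus as a neighbourhood, so after a choice of coordinate $\Hx\cong\mathscr{H}_{\kac}(x_{0,r})$ with $r=r_F(x)\notin|\kac^\times|$; computing $\Hx\ct_{\kac}\Omega$ first over the algebraically closed complete field $\wac$, one finds the fiber to be the Gauss point $x_\Omega$ together with, for each residue class of centres $b$ with $|b|=r$, the open disc $\disco{\wac}{b}{r}$, these being pairwise disjoint, open in $X_{\wac}$, and each with boundary $\{x_\Omega\}$. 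Passing to a general $\Omega\in E(\kac)$ by factoring $\kac\hookrightarrow\Omega\hookrightarrow\wac$ and computing $\mathscr{H}(x_\Omega)$, one sees that $|\mathscr{H}(x_\Omega)^\times|$ contains $r^{\ZZ}$ while $\widetilde{\mathscr{H}(x_\Omega)}$ gains a transcendental over $\widetilde{\Omega}$ exactly when $r\in|\Omega^\times|$; hence $x_\Omega$ is of type~(2) in that case and of type~(3) otherwise. The type~(4) case is analogous, using that $x$ is realised over $\kac$ by a strictly decreasing sequence of closed discs $\disf{c_n}{r_n}$ with $r_n\downarrow r_F(x)$ and $\bigcap_n\disf{c_n}{r_n}(\kac)=\emptyset$: the fiber is $\bigcap_n\disf{c_n}{r_n}$ formed inside $X_\Omega$, which is either a single point of type~(4) (when no common centre appears over $\Omega$) or a closed disc whose Shilov point is $x_\Omega$, the type of $x_\Omega$ being read off from whether $r_F(x)\in|\Omega^\times|$, and whose complement of $x_\Omega$ is a union of open discs bubbling off at $x_\Omega$.

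The substantial case is type~(2), where $x$ need not admit a neighbourhood embeddable in $\AF$. Following \cite{np2} I would use the finite morphism $\Psi_b\colon Y_b\to W_b\subset\AF$ from the setup (as in \cite[Theorem~3.12]{np2}), which is étale near $x$, of degree $N$ prime to $p$, and satisfies $\Psi_b^{-1}(\Psi_b(b))=\{b\}$, to transport the question to the type-(2) point $x':=\Psi_b(x)$ of $\AF$. Over $\AF$ the fiber over $x'$ is computed directly, by an argument parallel to the type-(3) case: it is the Gauss point $x'_\Omega$ — of type~(2) since $\widetilde{\mathscr{H}(x'_\Omega)}$ contains $\widetilde{\Omega}(t)$ — together with, for each residue class $\xi\in\widetilde{\Omega}^{alg}\setminus\widetilde{\kac}$, an open disc attached at $x'_\Omega$. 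Now $\Psi_{b,\Omega}$ restricts to a finite étale morphism $\piro{\Omega}{\kac}^{-1}(x)\to\piro{\Omega}{\kac}^{-1}(x')$ of degree $N$; over each open disc of the target it is a tame étale cover, hence trivial (tame covers of a $p$-adic open disc split into disjoint copies mapping isomorphically), and over the central point it is finite étale. Since the source is connected and $x_\Omega$ maps to $x'_\Omega$, there is a single point above $x'_\Omega$, namely $x_\Omega$, with $[\mathscr{H}(x_\Omega):\mathscr{H}(x'_\Omega)]=N$ — so $x_\Omega$ is again of type~(2) — and the rest of the fiber is a disjoint union of open discs attached at $x_\Omega$. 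In every case these discs are the connected components of $\piro{\Omega}{\kac}^{-1}(x)\setminus\{x_\Omega\}$, they are open in $X_\Omega$, and each has $\{x_\Omega\}$ as boundary.

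The main obstacle will be the type~(2) case: making rigorous the identification of $\mathscr{H}_{\kac}(x_{0,1})\ct_{\kac}\Omega$ with an explicit algebra of analytic functions on the ``circle of radius $1$'', and, relatedly, controlling the residue ring $\widetilde{\Hx}\otimes_{\widetilde{\kac}}\widetilde{\Omega}$ — which must be shown to embed into $\widetilde{\mathscr{H}(x_\Omega)}$ and to produce precisely the residue classes indexing the discs that bubble off; this is exactly where the multiplicativity of the tensor norm (universality of $x$) does the work, and likewise the triviality of tame étale covers of open discs is what makes the descent along $\Psi_b$ work. A secondary source of care is the passage from $\wac$ to an arbitrary $\Omega\in E(\kac)$ — in particular keeping track of Galois actions on residue classes when $\Omega$ is not algebraically closed, so that the components are \emph{virtual} open discs in general — together with, in the type-(4) case, the pseudo-Cauchy analysis deciding when a nested sequence of $\kac$-discs acquires a common centre over $\Omega$. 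The remaining verifications (connectedness of the fiber, openness of the discs in $X_\Omega$, their common boundary $\{x_\Omega\}$) are then routine once the models are in place.
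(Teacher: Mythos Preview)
The paper does not prove this statement: it is quoted verbatim as \cite[Theorem~2.15]{np2} in the preliminaries and is used as a black box, with no proof or sketch given. So there is nothing in the paper to compare your proposal against; your sketch is a plausible outline of how the result is established in \cite{np2} (reduction to $\cM(\Hx\ct_{\kac}\Omega)$, case analysis on the type of $x$, and for type~(2) a reduction to the affine line via a finite \'etale map), but any detailed assessment would have to be made against that reference rather than the present paper.
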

 \begin{cor}[{\cite[Corollary~2.16]{np2}}]
  Let $\Omega\in E(F)$ and $S$ be a weak triangulation of $X$. Then
  $S_\Omega$ is a weak triangulation of $X_\Omega$ whose skeleton is $\Gamma_{S_\Omega}$.
 \end{cor}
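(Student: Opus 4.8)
The plan is to reach $\cS_\Omega$ from $\cS$ in two stages that mirror the definition of $\cS_\Omega$: first lift the weak triangulation $\cS_{\kac}$ of $X_{\kac}$ along $\piro{\wac}{\kac}$ to the algebraically closed field $\wac$, and then descend from $\wac$ to $\Omega$ by Galois theory along $\piro{\wac}{\Omega}$. The only nontrivial inputs are Theorem~2.15 of \cite{np2} and the structure results on curves over non-algebraically closed fields of \cite{Duc}; the rest is bookkeeping.

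For the first stage I would argue as follows. By the remark preceding the statement, $\cS_{\kac}$ is a weak triangulation of $X_{\kac}$ with skeleton $\Gamma_{\cS_{\kac}}=\pik{\kac}^{-1}(\Gamma_\cS)$. Since $\wac\in E(\kac)$ and $\kac$ is algebraically closed, the first part of Theorem~2.15 applied to each $x\in\cS_{\kac}$ (of type (2) or (3)) shows that its canonical lift $x_{\wac}$ is again of type (2) or (3); hence $\cS_{\wac}=\{x_{\wac}:x\in\cS_{\kac}\}$ consists of such points, and it is locally finite because $\cS_{\kac}$ is and $\piro{\wac}{\kac}$ is proper with connected fibres. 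To read off the components of $X_{\wac}\setminus\cS_{\wac}$, write $X_{\wac}=\piro{\wac}{\kac}^{-1}(\cS_{\kac})\,\sqcup\,\bigsqcup_{C}\piro{\wac}{\kac}^{-1}(C)$, with $C$ running over the (open disc or open annulus) components of $X_{\kac}\setminus\cS_{\kac}$. Each $\piro{\wac}{\kac}^{-1}(C)$ is the base change $C\ct_{\kac}\wac$ since $C$ is open, hence an open disc or an open annulus of the same modulus; and by the second part of Theorem~2.15, $\piro{\wac}{\kac}^{-1}(\cS_{\kac})\setminus\cS_{\wac}=\bigsqcup_{x\in\cS_{\kac}}\bigl(\piro{\wac}{\kac}^{-1}(x)\setminus\{x_{\wac}\}\bigr)$ is a disjoint union of open discs, each open in $X_{\wac}$. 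These pieces are open, connected, and partition $X_{\wac}\setminus\cS_{\wac}$, so they are exactly its connected components; thus $\cS_{\wac}$ is a weak triangulation of $X_{\wac}$. For the skeleton, the annulus components are precisely the $C\ct_{\kac}\wac$ with $C$ an annulus, and the skeleton of such a base-changed annulus is $\{x_{\wac}:x\in\Gamma_C\}$ (compatibility of annulus skeleta with base change, \cite{Duc}); together with the description of the skeleton as the union of $\cS_{\wac}$ with the skeleta of the annulus components, this gives that the skeleton of $\cS_{\wac}$ equals $\{x_{\wac}:x\in\cS_{\kac}\cup\bigcup_C\Gamma_C\}=\{x_{\wac}:x\in\Gamma_{\cS_{\kac}}\}=\Gamma_{\cS_{\wac}}$.

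For the descent, since $F\subset\Omega$ the group $G=\mathrm{Gal}(\Omega^{alg}/\Omega)$ acts isometrically on $\wac$, hence on $X_{\wac}$, and $\piro{\wac}{\Omega}$ identifies $X_\Omega$ with the quotient $X_{\wac}/G$ by this proper action (\cite{Duc}). The canonical points $x_{\wac}$ are $G$-fixed — they are defined by the multiplicative tensor norm, which $G$ preserves — so $\cS_{\wac}$ is a union of $G$-orbits, whence $\cS_{\wac}=\piro{\wac}{\Omega}^{-1}(\cS_\Omega)$ and likewise $\Gamma_{\cS_{\wac}}=\piro{\wac}{\Omega}^{-1}(\Gamma_{\cS_\Omega})$. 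From this I would deduce: $\cS_\Omega$ is locally finite (properness of the quotient); its points are of type (2) or (3), since they lie over the type-(2)/(3) set $\cS\subset X$ under $\pik{\Omega}$ and the type behaves accordingly by Theorem~2.15 and \cite{Duc}; and the components of $X_\Omega\setminus\cS_\Omega$ are the images of those of $X_{\wac}\setminus\cS_{\wac}$, each of which is a virtual open disc or annulus because its preimage is its full $G$-orbit, a disjoint union of discs (resp. annuli) whose orientations are $G$-stable — this last point inherited from the fact that the whole configuration is pulled back from the $F$-rational triangulation $\cS$. The same orbit computation shows that the skeleton of $\cS_\Omega$ is $\piro{\wac}{\Omega}(\Gamma_{\cS_{\wac}})=\Gamma_{\cS_\Omega}$, which is the assertion. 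I expect the main obstacle to be exactly this last stage: making precise that $\cS_{\wac}$ is the full preimage of $\cS_\Omega$ and that Galois descent carries the weak-triangulation property and the skeleton through the quotient $\piro{\wac}{\Omega}$; the genuinely hard analytic content (fibres of $\piro{\wac}{\kac}$, behaviour of the type, base change of annuli) is already packaged in Theorem~2.15 and \cite{Duc}.
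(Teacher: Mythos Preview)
The paper does not give its own proof of this corollary; it is quoted verbatim as \cite[Corollary~2.16]{np2} with no argument supplied. Your two-stage strategy---first lifting the weak triangulation $\cS_{\kac}$ to $\wac$ using the fibre description in the preceding Theorem (Theorem~2.15 of \cite{np2}), then descending from $\wac$ to $\Omega$ by the Galois action of $\mathrm{Gal}(\Omega^{alg}/\Omega)$---is exactly the route dictated by the definition of $\cS_\Omega$ given just above the corollary, and it is the argument one finds in the cited source. The sketch is correct; your own caveat about the descent step (checking that $\cS_{\wac}$ is the full $\piro{\wac}{\Omega}$-preimage of $\cS_\Omega$ and that the quotient of a $G$-orbit of discs or annuli is a virtual disc or annulus) identifies precisely where the remaining verifications lie, and these are handled by the structure results in \cite{Duc} you already invoke.
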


 \begin{Defi}\label{sec:shape-quasi-smooth}
  Let $x\in X$. Let $\Omega\in E(\Hx)$ be algebraically closed and
  $t_x\in \pik{\Omega}\-1(x)\cap X_{\Omega}(\Omega)$. We set $D_\Omega(x,S)$ to be the
  maximal open disk containing $t_x$ in
  $X\setminus\Gamma_{S_\Omega}$, and $D(x)$ to be the connected
  component of $\piro{\Omega}{\kac}\-1(x)\setminus \{x_\Omega\}$
  containing $t_x$.
\end{Defi}
\begin{Defi}\label{sec:shape-quasi-smooth-2}
 Let $x\in X$. Let $\Omega\in E(\Hx)$ be algebraically closed and
  $t_x\in \pik{\Omega}\-1(x)\cap X_\Omega(\Omega)$. We consider $T_x: D_\Omega(x,S)\overset{\sim}{\to}
\disco{\Omega}{0}{R}$, we set $r_{{T_x},F}(x)=0$ if $x$ is of type (1), and
equal to the radius of the disk $T_x(D(x))$ otherwise. We set $r_{\cS,F}(x):=\frac{r_{{T_x},F}(x)}{R}$. 
\end{Defi}

\begin{rem}
 The value $r_{\cS,F}(x)$ depends only on the skeleton $\Gamma_\cS$, and
 if $x\in\Gamma_\cS$ then $r_{\cS,F}(x)=1$.
\end{rem}

\subsubsection{The radii of convergence of a linear differential equation
over a quasi-smooth curve}\label{sec:radii-conv-line-3} By a linear differential equation over
$(X,\cO_X)$, we mean a locally free $\cO_X$-module $\F$ of finite type, of rank $n$, with an
integrable connection $\nabla$, i.e. a $F$-linear map
$\nabla:\F(U)\to \Omega_{X/F}(U)\ot\F(U)$ satisfying the Leibniz
rule $\nabla(f. e)=df\ot e+f.\nabla(e)$. Let $x\in X$ and $\Omega\in E(\Hx)$
algebraically closed, we
set $(\F_\Omega,\nabla_\Omega):=\pik{\Omega}^*(\F,\nabla)$ and
$\F_\Omega^{\nabla_\Omega}:=\Ker \nabla_\Omega$. We consider          
$T_x: D_\Omega(x,\cS)\overset{\sim}{\to}
\disco{\Omega}{0}{R}$, we defined
\begin{equation}
 \label{eq:4}
 \cR_{\cS,i}(x,(\F,\nabla)):=\frac{\sup\{r\in[0,R]; \dim_F(
\F_\Omega^{\nabla_\Omega}(\disco{\Omega}{t_x}{r}))\geq n-i+1\}}{R} 
\end{equation}
to be the $i$th radius of convergence at $x$ of $(\F,\nabla)$, 
\begin{equation}
 \label{eq:5}
 \cR_\cS(x,(\F,\nabla))=(\cR_{\cS,1}(x,(\F,\nabla)),\cdots,
 \cR_{\cS,n}(x,(\F,\nabla)))\in (0,1]^r
\end{equation}
to be the multiradius of convergence at $x$ of $(\F,\nabla)$. We may drop $\nabla$ from the notation if it is obvious from the
context.

\begin{rem}
 The values of $\cR_{\cS,i}(x,(\F,\nabla))$ do not depend on $\Omega$ and $T_x$. Hence for the simplicity, from now on we choose $\Omega$
 such that we can choose the coordinate $T_x: D_\Omega(x,\cS)\overset{\sim}{\to}
 \disco{\Omega}{0}{1}$.
\end{rem}

We set
\begin{equation}
 \label{eq:19}
 \omega_{\cS,i}(x,(\F,\nabla))=\F_\Omega^{\nabla_\Omega}(\disco{\Omega}{t_x}{\cR_{\cS,i}(x,(\F,\nabla))},
\end{equation}
then we have
\begin{equation}
 \label{eq:20}
 0\ne\omega_{\cS,n}(x,(\F,\nabla))\subseteq\cdots\subseteq \omega_{\cS,1}(x,(\F,\nabla)).
\end{equation}
We may drop $\nabla$ from the notation if it is obvious from the
context.

\begin{Defi}\label{sec:radii-conv-line-2}
 We say that $i$ separates the radii of $(\F,\nabla)$ at $x$ if
 either $i=1$ or if one of the following equivalent conditions holds:
 \begin{enumerate}
 \item $\cR_{\cS,i-1}(x,\F)<\cR_{\cS,i}(x,\F)$;
 \item $\omega_{\cS,i}(x,\F)\subsetneq \omega_{\cS,i-1}(x,\F)$.
 \end{enumerate}
 We say that $i$ separates the radii of $\F$ if it separates the
 radii of $\F$ for all $x\in X$.
\end{Defi}

Suppose now that $x$ is not of type (1), we define
the $i$th spectral radius of convergence $x$ of $(\F,\nabla)$ to be
\begin{equation}
 \label{eq:6}
 \RS_{i}(x,(\F,\nabla))=\frac{\min(\cR_{\cS,i}(x,(\F,\nabla)),r_{\cS,F}(x))}{r_{\cS,F}(x)}
\end{equation}
and
the spectral multiradius to be
\begin{equation}
 \label{eq:7}
\RS(x,(\F,\nabla))=(\RS_{1}(x,(\F,\nabla)),\cdots,
 \RS_{n}(x,(\F,\nabla)))\in (0,1]^r. 
\end{equation}
\begin{rem}
 For all $i$, the value $\RS_{i}(x,(\F,\nabla))$ is nothing but
 $\cR_{\emptyset,i}(t_x,({\F_\Omega}_{|_{D(x)}},\nabla_\Omega))$. That
 is why the spectral radii are intrinsic to the point $x$.
\end{rem}

\begin{Theo}[{\cite[Theorem~3.6]{np2}}]\label{sec:radii-conv-line}
 The multiradius $\cR_\cS$ satisfies the following properties:
 \begin{enumerate}
 \item it is continuous;
 \item its restriction to any locally finite graph $\Gamma$ is
  piecewise log-affine;
 \item on any interval $J$, the log-slopes of its restriction are
  rational numbers of the from $\frac{m}{j}$ with $m\in \ZZ$ and
  $j\in \{1,\cdots, n\}$.
 \item there exists a locally finite subgraph $\Gamma$ of $X$ and a
  continuous retraction $r: X\to \Gamma$ such that the map $\cR_\cS(.,(\F,\nabla))$
  factorizes by $r$. 
 \end{enumerate}
\end{Theo}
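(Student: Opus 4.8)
The plan is to reduce all four assertions to the case of a differential module on an open disc or an open annulus over $F$, and there to invoke the structure theory of the convergence Newton polygon (Dwork--Robba, Christol--Mebkhout, Young, Kedlaya, Pulita).

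\textbf{Reduction to discs and annuli.} All four properties are local on $X$. If $x\notin\Gamma_\cS$, then $x$ has a virtual open disc as a neighbourhood, which becomes a genuine open disc over $F$ after base change to $\kac$. If $x$ is of type (3) and lies on $\Gamma_\cS$, a neighbourhood is a virtual open annulus. If $x$ is of type (2), one uses the étale morphism $\Psi\colon Y\to W$ of Theorem~\ref{sec:spectr-radii-diff} onto an affinoid domain $W$ of $\Pf$: via $\Psi$ the radii of $(\F,\nabla)$ in a neighbourhood of $x$ are governed by a differential module on $W$, and $W$ is, after removing a weak triangulation (Ducros), a union of open discs and annuli. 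One checks that continuity, piecewise log-affinity, the shape of the log-slopes and local finiteness of the controlling graph are all preserved by these operations (the tame finite pull-backs at type-(2) points change the radii by a log-affine substitution). Hence it is enough to treat a differential module $M=(\F,\nabla)$ of rank $n$ on an open disc or open annulus $C$ over $F$, with coordinate $T$.

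\textbf{The one-dimensional case.} Parametrise the skeleton of $C$ by $\rho\mapsto x_{0,\rho}$ and set $H_i(\rho)=\sum_{j=1}^{i}\log\cR_{\cS,j}(x_{0,\rho})$, the partial heights of the convergence Newton polygon. First I would prove continuity of each $\cR_{\cS,i}$ by establishing upper and lower semicontinuity separately: the one from the way solutions on a disc restrict to sub-discs, the other from the stability of convergence under a small move of the centre, with Dwork's transfer theorem handling the rigid points; continuity along the skeleton then propagates to all of $C$ because on each residual disc the restricted module extends across the boundary. Next, on a compact sub-annulus I would decompose $M$ by the slopes of its convergence Newton polygon: the part of large radius is over-solvable and its index is given by the index theorem for differential operators on annuli, which makes each $H_i$ affine in $\log\rho$ with integer slope on that part; the part of small radius is brought back to the previous case by iterating the Frobenius-antecedent (pushforward) construction, which rescales $\rho$ by a power of $p$ and cannot increase the rank --- this is exactly what produces log-slopes of the form $\frac{m}{j}$ with $m\in\ZZ$ and $1\le j\le n$. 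This presents each $H_i$, on a compact sub-annulus, as a piecewise affine function of $\log\rho$ with finitely many pieces. The same index computation shows each $H_i$ is superharmonic, hence log-concave along the skeleton, so its log-slopes are monotone; being confined to a fixed finite set of admissible values on a compact interval, $H_i$ has only finitely many break points. This yields assertions (i)--(iii) for $C$, and then for an arbitrary locally finite subgraph $\Gamma$ of $X$ by covering $\Gamma$ locally with finitely many such annuli.

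\textbf{The controlling graph.} For (iv), I would take $\Gamma$ to be the union of $\Gamma_\cS$ with the locus where some $\cR_{\cS,i}$ is not locally constant in a disc direction. Inside a residual open disc $D$ of $X\setminus\Gamma_\cS$, the piecewise log-affinity and log-concavity of the partial heights established above force the set of points of $D$ at which some $\cR_{\cS,i}$ has nonzero log-slope toward $\partial D$ to be the convex hull of finitely many points; taking the union over the locally finite family of residual discs produces a locally finite subgraph $\Gamma$. On every connected component of $X\setminus\Gamma$ each $\cR_{\cS,i}$ is constant, so $\cR_\cS$ factors through the canonical retraction $r\colon X\to\Gamma$ that sends a point to the boundary of its maximal disc in $X\setminus\Gamma$.

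The hard part is the one-dimensional case. The reduction of the small-radius part of the Newton polygon via Frobenius antecedents is the technical heart and is what forces the slope denominators to divide some $j\le n$; and showing that $\Gamma$ is genuinely locally finite --- rather than a countable union of break points with an accumulation point --- relies on the superharmonicity of the partial heights, a delicate local index (Laplacian) computation. The passage from the analytic line to a general quasi-smooth curve adds the further difficulty of tracking the radii through the étale morphisms at type-(2) points and of gluing the resulting pieces of the controlling graph.
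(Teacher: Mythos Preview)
The paper does not prove this statement: it is quoted verbatim from \cite[Theorem~3.6]{np2} and used as a black box throughout. There is therefore no ``paper's own proof'' to compare your attempt against.

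Your sketch is a reasonable outline of the strategy actually carried out in \cite{np2,np3} (and, for the one-dimensional core, in \cite{ked13} and \cite{and}), and the ingredients you name --- reduction to discs and annuli via weak triangulations and the \'etale charts of Theorem~\ref{sec:spectr-radii-diff}, Frobenius antecedents to bound slope denominators, superharmonicity/log-concavity of the partial heights to control break points --- are the right ones. But as written it is a plan, not a proof: several of the steps you describe as things you ``would'' do (tracking radii through the tame \'etale maps at type-(2) points, the local finiteness of the controlling graph from superharmonicity) are precisely the places where the cited references do substantial work, and none of that work is reproduced here. If the intent is to give an independent proof, you would need to fill those in; if the intent is only to recall the result, a citation suffices, which is what the paper does.
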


\begin{rem}\label{sec:radii-conv-line-1}
 Note that the map $r_{\cS,F}$ on any finite graph of $X^{\rm{hyp}}$ is continuous, piecewise log-affine with
 slopes equal to $1$ on any interval in $X\setminus\Gamma_\cS$ and $0$
 on any interval in $\Gamma_\cS$. Therefore, on every finite graph of
 $X^{\rm{hyp}}$, $\RS(x,(\F,\nabla))$
 satisfies Properties 1), 2) and 3) of Theorem~\ref{sec:radii-conv-line}.
\end{rem}

\begin{Defi}\label{sec:radii-conv-line-4}
The {\it controlling graph} $\Gamma(\cR_{\cS},\F)$ of the radii of convergence $\cR_{\cS}(.,(\F,\nabla))$ 
with respect to $\cS$, is a graph that satisfies the
following, for all $x\in \Gamma(\cR_{\cS,\F})$, $x$ does not admit a
neighborhood in $X$ such that:
\begin{itemize}
\item $D$ is a disk;
\item $\cR_{\cS,i}(.,(\F,\nabla))$ are all constant on $D$;
\item $D\cap \Gamma_{\cS}=\emptyset$. 
\end{itemize}
\end{Defi}

\begin{cor}[{\cite{and,np2}}]
 The graph $\Gamma(\cR_{\cS},\F)$ is locally finite.
\end{cor}

\subsubsection{Spectral radii of a differential module over $\Hx$} 

Let $X$ be a quasi-smooth analytic curve. Let us give now the definition of the spectral radii of a differential
module $(M,\nabla)$ over $(\Hx,d)$, with $x\in\Xh$ and $d$ a nonzero bounded $F$-linear
derivation defined on a neighborhood of $x$. We consider first a point $x\in\AF$ not of type (1) and
a differential module $(M,\nabla)$ over $(\Hx,g\dT)$ of rank $n$, with
$g\in\Hx^\times$.           
 Any differential module
 $(M,\nabla)$ over $(\Hx,g\dT)$, with $g\in \Hx^\times$, can be seen as a
 differential equation $(\F_\Omega,\nabla_\Omega)$ over $D(x)$, free as $\cO(D(x))$-module. Indeed we have the
 embedding
 \begin{equation}
  \label{eq:22}
  \Fonction{\xi_x:\Hx}{\cO(D(x))}{f(T)}{\sum_i\dT^i(f(T))\frac{(T-t_x)^i}{i!}},
 \end{equation}
 and by construction $\dT$ extends naturally to $\dT$ on $\cO(D(x))$. We set
 \begin{equation}
  \label{eq:11}
  \RS_{i}(x,(M,\nabla)):=\cR_{\emptyset,i}(t_x, (\F_\Omega,\nabla_\Omega)).
 \end{equation} 
 Consider now $x\in \Xh$. We need to construct an embedding
 as \eqref{eq:22}, for that we have to choose a convenient
 bounded derivation on $\Hx$. The following theorem provides a suitable étale morphism between an affinoid neighborhood
 of $x$ and an affinoid domain of $\AF$, which allows for pull-back
 the derivation $\dT$.
\begin{Theo}[{\cite[Theorem~3.12]{np2}, \cite{Duc}}]\label{sec:spectr-radii-diff}
 Let $x\in X$ be a point of type (2). Let $b_1,\cdots, b_\nu$ and $b$ be
 distinct branches out of $x$. Let $N$ be a positive integer. There
 exists an affinoid neighborhood $Z$ of $x\in X$, a quasi-smooth
 affinoid curve $Y$, an affinoid domain $W$ of $\Pf$ and a finite
 étale map $\psi: Y\to W$ such that the following hold:
 \begin{enumerate}
 \item $Z$ is isomorphic to an affinoid domain of $Y$ and $x$ lies in the
  interior of $Y$; 
 \item the degree of $\psi$ is prime to $N$;
 \item $\psi\-1(\psi(x))=\{x\}$;
 \item almost every connected component of $Y\setminus \{x\}$ is an
  open unit disk with boundary $\{x\}$;
 \item almost every connected component of $W\setminus \{\psi(x)\}$
  is an open disk with boundary $\{\psi(x)\}$;
 \item for almost every connected component $V$ of $Y\setminus
  \{x\}$, the induced morphism $V\to \psi(V)$ is an isomorphism;
 \item for every $i\in\{1,\cdots,\nu\}$, the morphism $\psi$ induces
  an isomorphism between a section of $b_i$ and a section of
  $\psi(b_i)$ and we have $\psi\-1(\psi(b_i))\subset Z$;
 \item {$\psi ^{-1}(\psi (b))=\{b\}$}. 
 \end{enumerate}
\end{Theo}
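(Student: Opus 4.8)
Since this is \cite[Theorem~3.12]{np2} (see also \cite{Duc}), I only sketch the strategy one would follow.

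\textbf{Step 1: passing to the residue curve.} The first move is to translate everything into a problem on the residue curve of $x$. As $x$ is of type (2) and $F$ is algebraically closed, the residue field $\rk$ is algebraically closed and $\widetilde{\Hx}$ is the function field of a smooth projective connected curve $\widetilde{\cC}_x$ over $\rk$; after embedding an affinoid neighbourhood of $x$ in $X$ into the interior of a suitable quasi-smooth affinoid curve, the branches out of $x$ correspond to the closed points of $\widetilde{\cC}_x$, and I write $\overline b_1,\dots,\overline b_\nu,\overline b$ for the points attached to $b_1,\dots,b_\nu,b$. The goal then becomes to build a finite separable morphism $\overline\psi\colon\widetilde{\cC}_x\to\PP^1_{\rk}$ of degree prime to $N$ that is unramified at each $\overline b_i$ with pairwise distinct values $\overline\psi(\overline b_i)$, and totally ramified at $\overline b$ with $\overline\psi^{-1}(\overline\psi(\overline b))=\{\overline b\}$; the last condition will yield~(8), the separation and unramifiedness at the $\overline b_i$ will yield~(7), and the degree of a lift of $\overline\psi$ will give~(2).

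\textbf{Step 2: constructing $\overline\psi$ by Riemann--Roch.} Let $g=g(\widetilde{\cC}_x)$. Since the Weierstrass semigroup of $\overline b$ omits only $g$ integers, all $<2g$, I would pick a non-gap $m$ with $\gcd(m,Np)=1$ and then a function $\overline h\in\widetilde{\Hx}$ with polar divisor exactly $m\,\overline b$. The associated morphism $\overline\psi=\overline h$ has degree $m$, is totally ramified over $\infty$ with $\overline\psi^{-1}(\infty)=\{\overline b\}$, and is separable because $p\nmid m$ forces $\overline h\notin\widetilde{\Hx}^{p}$ (using $\rk$ perfect). Within the complete linear system $|m\,\overline b|$, the functions with a pole of order exactly $m$ form a dense open subset, and the extra conditions that $\overline h$ be unramified at each $\overline b_i$ and separate them define a further open subset, nonempty since $\rk$ is infinite; a generic $\overline h$ therefore works, and $\overline\psi(\overline b_i)\neq\infty=\overline\psi(\overline b)$ automatically because $\overline b_i\neq\overline b$ is not a pole of $\overline h$.

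\textbf{Step 3: lifting to characteristic $0$ and reading off the conclusions.} Using that $x$ is in the interior of the ambient algebraic curve, $\overline h$ lifts to a function on an affinoid neighbourhood of $x$, hence to a morphism into $\AF$; shrinking gives a finite étale $\psi\colon Y\to W$ with $W$ an affinoid domain of $\Pf$ and $Z\subset Y$ an affinoid neighbourhood of $x$ in its interior, so~(1) holds, and the reduction of $\psi$ at $x$ is $\overline\psi$. Then $\deg\psi=\deg\overline\psi=m$ gives~(2); the equality $\deg\overline\psi=\deg\psi$ forces the fibre of $\psi$ over $\psi(x)$ to reduce entirely onto $\widetilde{\cC}_x$, whence $\psi^{-1}(\psi(x))=\{x\}$, i.e.~(3). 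Because $\rk$ is algebraically closed, at all but finitely many closed points of $\widetilde{\cC}_x$ the corresponding formal fibres in $Y$ and $W$ are open unit discs bounded by $x$, resp.~$\psi(x)$, giving~(4) and~(5); over the cofinitely many branches at which $\overline\psi$ is unramified with reduced fibre, $\psi$ restricts to an isomorphism of discs, giving~(6). Since the $b_i$ were arranged to be good branches, $\psi$ is an isomorphism on a section of each $b_i$, the $\psi(b_i)$ are pairwise distinct, and enlarging $Z$ to contain the finitely many formal fibres over the branches $\psi(b_i)$ gives $\psi^{-1}(\psi(b_i))\subset Z$, i.e.~(7); finally $\overline\psi^{-1}(\overline\psi(\overline b))=\{\overline b\}$ lifts to $\psi^{-1}(\psi(b))=\{b\}$, i.e.~(8).

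\textbf{Expected main obstacle.} The hard part is Step~3: faithfully transporting the data from the characteristic-$p$ curve $\widetilde{\cC}_x$ to an actual finite étale cover of Berkovich curves near $x$, while simultaneously keeping control of the degree, of the ramification, of the fact that the target sits inside $\Pf$, and of the discs attached to the individual branches. This is exactly what the graded-reduction techniques of Temkin and the structure theory of Ducros (\cite{Duc}) provide, and what Poineau--Pulita (\cite{np2}) assemble into the statement; the plan is to invoke that machinery rather than to reprove it.
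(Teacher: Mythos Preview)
The paper does not give its own proof of this theorem: it is quoted verbatim as \cite[Theorem~3.12]{np2} (with \cite{Duc} as background) and used as a black box, so there is nothing in the paper to compare your sketch against. Your outline---reducing to the residue curve $\widetilde{\cC}_x$, building a separable degree-$m$ map to $\PP^1_{\rk}$ with $\gcd(m,Np)=1$ via Riemann--Roch that is totally ramified at $\overline b$ and generic at the $\overline b_i$, then lifting via the graded-reduction/structure machinery of Ducros and Temkin---is indeed the strategy carried out in \cite{np2}, and your identification of Step~3 as the technical core is accurate.
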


We choose such an étale morphism $\psi:Z\to W$ defined around $x$ with
$N=\max(\crk,1)$. For the following, for more details we refer to \cite[Section~3.1]{np3}. Note that if $T$ is a coordinate function on
an affinoid domain $W$ of $\AF$, then it extends naturally to a
coordinate function on
$W_\Omega$, for all $\Omega\in E(F)$. Let $T$ be a coordinate function
on $W$. By \cite[Lemma~3.1.2]{np3}, we have the isomorphism
$\psi_\Omega: D(x)\overset{\sim}{\to} D(\psi(x))$, which means that
$\psi^\#(T)$ is a
coordinate function on $D(x)$, for the simplicity we still denote it
by $T$. In this case, we have $\psi^*(\dT)=\dT$ on $D(x)$, and we set
$d:\Hx\to\Hx$ to be $d=\psi^*(\dT)$. There exists a unique morphism of
differential ring
$\xi_x:(\Hx,d)\to (\cO(D(x)),\dT)$ such that the following diagram
commute
\begin{equation}
 \label{eq:23}
 \xymatrix{\h{\psi(x)}\ar[d]_{\psi^\#}\ar[r]^{\xi_{\psi(x)}}&\cO(D(\psi(x))
  \ar[d]_{\psi^\#}\\
  \Hx\ar[r]^{\xi_x}& \cO(D(x)),\\} 
\end{equation}
and it is defined by
\begin{equation}
 \label{eq:25}
 \Fonction{\xi_x:\Hx}{\cO(D(x))}{f}{\sum_id^i(f)\frac{(T-t_x)^i}{i!}}.
\end{equation}
Therefore we can pull-back any differential module $(M,\nabla)$ over
$(\Hx,gd)$, with $g\in \Hx^\times$, to a differential equation $(\F_\Omega,\nabla_\Omega)$
over $D(x)$. We set
\begin{equation}
 \label{eq:27}
 \RS_{i}(x,(M,\nabla)):=\cR_{\emptyset,i}(t_x, (\F_\Omega,\nabla_\Omega)).
\end{equation}
Given a differential equation $(\F,\nabla)$ over $X$, and considering
$(\F_x\ot_{\cO_{X,x}}\Hx,\nabla)$ as a differential module over
$(\Hx,d)$, where $d$ is a non zero $F$- linear derivation defined on a
neighborhood $V$ of $x$ and bounded on affinoid domain, it is important to
observe that
\begin{multline}
 \label{eq:29}
 (\F_x\ct_{\cO_{X,x}}\Hx)\ct_{\Hx}\cO(D(x))=(\F(V)\ct_{\cO_X(V)}\Hx) \ct_{\Hx}\cO(D(x))\\=(\F(V)\ct_k\Omega)\ct_{\cO_{X_\Omega(V_\Omega)}}
 \cO(D(x))=\F_\Omega(D(x)).
\end{multline}
Hence, we have the equality
\begin{equation}
 \label{eq:28}
 \RS_i(x,(\F,\nabla))=\RS_i(x, (\F_x\ot_{\cO_{X,x}}\Hx,\nabla)).
\end{equation}

\begin{Defi}
 Let $(M,\nabla)$ be a differential module over $(\Hx,d)$.
 \begin{itemize}
 \item We say that $(M,\nabla)$ is {\it pure} if and only if all its
  spectral radii are equal.
 \item We say that $(M,\nabla)$ is {\it solvable} if and only if all
  its spectral radii are equal to 1 (maximal).
 \end{itemize}
\end{Defi}
 
\subsubsection{Robba's decomposition by the spectral radii and augmented
 Dwork-Robba decomposition} There exist two important
results about the decomposition of a differential equation by the
radii of convergence. The first
one is a decomposition by the spectral radii of differential module
over $\Hx$, initially provided by
Robba \cite{Rob75a} in the framework of affine line. The second
one is a local decomposition by the spectral radii of convergence,
firstly proved by Dwork and Robba \cite{DR77} around a point of type
(2) of $\AF$. These two results where
generalized by Poineau and Pulita in \cite{np3} for the case of a
curve, and in some conditions they provide a global
decomposition by the radii of convergence instead of the spectral
radii. We recall these results in their general form.
\begin{nota}
 Let $(M,\nabla)$ (resp. $(\F,\nabla)$) be a differential module over $(A,d)$
 (resp. differential equation over $X$), we denote by $(M^*,\nabla^*)$
 (resp. $(\F^*,\nabla^*)$) its dual. 
\end{nota}
\begin{Theo}[Robba, {\cite{Rob75a}, \cite[Corollary~3.6.]{np3}}]\label{sec:robb-decomp-spectr}
 Let $x\in \Xh$. Let $(M,\nabla)$ be a differential
 module over $(\Hx,d)$, where $d:\Hx\to\Hx$ is a nonzero bounded $F$-linear derivation. There exists a unique decomposition
 \begin{equation}
  \label{eq:21}
  (M,\nabla)=\bigoplus_{0<\rho\leq 1}(M^\rho,\nabla^\rho)
 \end{equation}
 with the following properties:
 \begin{enumerate}
 \item we have
  $M_\rho=0$ if and only if $\rho\notin
  \{\RS_1(x,(M,\nabla)),\cdots,\RS_n(x,(M,\nabla))\}$;
 \item If $\rho=\RS_i(x,(M,\nabla))$, then $(M^\rho,\nabla^\rho)$ is
  pure with $\RS_1(x,(M^\rho,\nabla^\rho))=\RS_i(x,(M,\nabla))$.
 \item For all $\rho\in(0,1]$, we have $(M^\rho)^*\simeq (M^*)^\rho$.
  
 \end{enumerate}
\end{Theo}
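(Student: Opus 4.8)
This statement is \cite{Rob75a}, recovered in the generality stated here by \cite[Corollary~3.6]{np3}; I indicate the shape of the argument I would give. Using the étale morphism of Theorem~\ref{sec:spectr-radii-diff} and the commutative square~\eqref{eq:23}, a point of type (2) or (3) of a quasi-smooth curve reduces to a point of $\AF$, so assume $x\in\AF$; via the differential ring embedding $\xi_x$ of~\eqref{eq:25} one then realises $(M,\nabla)$ as a free rank-$n$ differential equation $(\F_\Omega,\nabla_\Omega)$ over the open disc $D(x)$, with $\RS_i(x,(M,\nabla))=\cR_{\emptyset,i}(t_x,(\F_\Omega,\nabla_\Omega))$ by construction. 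Let $\rho_1<\cdots<\rho_\mu$ be the distinct values taken by the $\RS_i$; the decomposition to produce is $(M,\nabla)=\bigoplus_{j=1}^\mu(M^{\rho_j},\nabla^{\rho_j})$ with $M^{\rho_j}$ pure of radius $\rho_j$ and all other $M^\rho=0$, which is exactly property~(1).

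The first step, valid over an arbitrary base, is the canonical \emph{filtration by the radii}: for each $j$ let $V_j\subseteq M$ be the largest differential submodule all of whose spectral radii are $\leq\rho_j$. Such a largest submodule exists because the multiset of radii of a sum of two submodules is contained in the union of their radii, and one obtains $0=V_0\subsetneq V_1\subsetneq\cdots\subsetneq V_\mu=M$ with each graded piece $\mathrm{gr}_j:=V_j/V_{j-1}$ pure of radius $\rho_j$ in the strong sense of property~(2) --- every sub-quotient of $\mathrm{gr}_j$ has all its radii equal to $\rho_j$, which is automatic since the radii of a sub-quotient form a sub-multiset of those of the ambient module. The content of the theorem is that this filtration \emph{splits} over $\Hx$.

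For the splitting the clean statement is: $\mathrm{Ext}^1_{\Hx}(N',N)=0$ whenever $N,N'$ are differential modules over $\Hx$ with every radius of $N$ strictly smaller than every radius of $N'$. Indeed $\mathrm{Ext}^1_{\Hx}(N',N)=\operatorname{coker}\!\left(\nabla:(N')^*\otimes N\to (N')^*\otimes N\right)$, and by the behaviour of radii under duality and tensor product every radius of $(N')^*\otimes N$ equals a radius of $N$, in particular is $<1$; a differential module over $\Hx$ with all radii $<1$ has $H^0=0$ (a nonzero horizontal section would span a trivial, hence radius-$1$, submodule) and --- this is the real point --- $H^1=0$ as well. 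Granting this, each $V_j$ splits off from $M$: the extension class lies in $\mathrm{Ext}^1_{\Hx}(M/V_j,V_j)$, which vanishes because the radii of $V_j$ are $\leq\rho_j$ while those of $M/V_j$ are $\geq\rho_{j+1}$. Thus $(M,\nabla)=\bigoplus_j\mathrm{gr}_j$, and we set $(M^{\rho_j},\nabla^{\rho_j})=\mathrm{gr}_j$. Property~(3) is then formal: $M^*=\bigoplus_j(M^{\rho_j})^*$ is again a decomposition of the required shape, duals of pure modules being pure of the same radius, so it is \emph{the} decomposition of $M^*$; and uniqueness holds at all because a pure module of radius $\rho$ and one of radius $\rho'\neq\rho$ share no nonzero sub-quotient, so the isotypic parts in any such decomposition are forced.

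The main obstacle is the vanishing $H^1=0$ for radius-$<1$ modules over $\Hx$ --- equivalently, that the radius filtration splits. Transported to the disc $D(x)$ via $\xi_x$ this is the Dwork--Robba decomposition, which one proves directly --- through the Newton polygon of a cyclic-vector differential operator --- only in the range where every radius exceeds $\omega$; the general case is reduced to that range by a Frobenius-antecedent argument ($p$-th power pullback increases the radii in a controlled way), decomposing there and then descending along Frobenius, the descent being legitimate precisely by the uniqueness above. This is the technically heavy part of \cite{Rob75a} and \cite{np3}, and is the reason the statement genuinely uses that $x$ is not of type (1).
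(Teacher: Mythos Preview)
The paper does not prove this theorem: it is stated in the preliminaries with attribution to \cite{Rob75a} and \cite[Corollary~3.6]{np3} and used as a black box throughout. So there is no ``paper's own proof'' to compare against; your proposal supplies an argument where the paper gives only a citation.

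Your outline is the standard one and is essentially correct: filtration by radii, splitting via $\mathrm{Ext}^1$-vanishing, reduction of the key $H^1=0$ statement to the range of large radii via Frobenius antecedent, and in that range the Newton-polygon/Hensel factorisation of a cyclic-vector operator. You correctly flag the $H^1$-vanishing as the real content and point to the right references for it.

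One imprecision worth tightening: the claim ``every radius of $(N')^*\otimes N$ equals a radius of $N$'' is stronger than what you need and not literally true in general without already knowing some decomposition. What you actually require, and what is provable directly from the spectral-norm inequality for tensor products without circularity, is the weaker statement that when all radii of $N$ are $\leq\rho_j$ and all radii of $N'$ are $\geq\rho_{j+1}>\rho_j$, then all radii of $(N')^*\otimes N$ are $<1$. Equivalently, one can bypass the tensor-product computation entirely and argue directly on the extension $0\to N\to E\to N'\to 0$: the radii of $E$ are the union of those of $N$ and $N'$, so the cyclic-vector operator of $E$ has a Newton polygon with a genuine break at slope corresponding to $\rho_j$, and the Robba--Hensel factorisation splits $E$. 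This is closer to how \cite{Rob75a} actually proceeds and avoids the slight looseness in your tensor-product step.
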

\begin{rem}
 A decomposition satisfying only point (1) and of (2) of
 Theorem~\ref{sec:robb-decomp-spectr}, satisfies automatically (3) of
 the theorem. 
\end{rem}

\begin{Theo}[{\cite[First thm. of Section~4]{DR77},\cite[Theorem~4.1.1]{np3}}]\label{sec:robb-decomp-spectr-2}
 Let $x\in \Xh$. Let $(M_x,\nabla_x)$ be a differential module over
 $(\cO_{X,x},d)$, where $d$ is a nonzero bounded $F$-linear derivation defined in
 affinoid neighborhood of $x$. There exists a unique decomposition
 \begin{equation}
  \label{eq:30}
  (M_x,\nabla_x)=\bigoplus_{0<\rho\leq 1}(M^\rho_x,\nabla_x^\rho)
 \end{equation}
 such that for all $0<\rho\leq$ we have
 $M_x^\rho\ct_{\cO_{X,x}}\Hx=(M_x\ct_{\cO_{X,x}}\Hx)^\rho$. Moreover
 if $M_x^{\geq \rho}:=\bigoplus_{\rho\leq \rho'}M_x^{\rho'}$ then 
 \begin{enumerate}
 \item The canonical composite map $(M^*_x)^{\geq \rho}\to M^*_x\to
  (M_x^{\geq\rho})^*$ is an isomorphism, in particular
  ${(M_x^\rho)^*\simeq (M_x^*)^\rho}$.
 \item If $M_x$ and $N_x$ are differential modules over
  $(\cO_{X,x},d)$, and if $\rho\ne \rho'$, then $\hom_{\cO_{X,x}}(M_x^\rho,N_x^{\rho'})=0$.
 \end{enumerate}
\end{Theo}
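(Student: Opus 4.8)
The plan is to obtain the decomposition of the ``generic fibre'' for free from Robba's theorem and then to \emph{descend} it to $\cO_{X,x}$ via a Dwork--Robba convergence estimate. Since $x\in\Xh$, the local ring $\cO_{X,x}$ is a field whose completion is $\Hx$, so $(M_x,\nabla_x)$ is just a finite-dimensional $\cO_{X,x}$-vector space with connection; base-changing along $\cO_{X,x}\hookrightarrow\Hx$ and applying Theorem~\ref{sec:robb-decomp-spectr} yields the canonical decomposition $(M_x\ct_{\cO_{X,x}}\Hx,\nabla)=\bigoplus_{0<\rho\leq 1}(M_x\ct\Hx)^\rho$, each piece being pure of spectral radius $\rho\in\{\RS_i(x,(M_x,\nabla_x))\}$. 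Ordering the distinct spectral radii $\rho_1<\cdots<\rho_k$, it suffices by induction on $k$ to split off, for a cut value $\rho_0\in(\rho_1,\rho_2)$, a direct summand $M_x^{\geq\rho_0}$ over $\cO_{X,x}$ with $M_x^{\geq\rho_0}\ct\Hx=\bigoplus_{\rho\geq\rho_0}(M_x\ct\Hx)^\rho$; faithful flatness of $\cO_{X,x}\hookrightarrow\Hx$ then assembles \eqref{eq:30} together with its stated compatibility, and forces uniqueness.

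To build $M_x^{\geq\rho_0}$ I would follow Dwork--Robba. First reduce to the analytic line: shrink to an affinoid neighbourhood of $x$ and use the étale morphism $\psi\colon Z\to W$ of Theorem~\ref{sec:spectr-radii-diff} with $N=\max(\crk,1)$, together with $d=\psi^*(\dT)$ and the isomorphism $\psi_\Omega\colon D(x)\xrightarrow{\sim}D(\psi(x))$; since $\psi^{-1}(\psi(x))=\{x\}$, the spectral radii are unchanged and a decomposition of the germ at $\psi(x)\in\AF$ pushes back up the finite cover. Over $\AF$, choose a cyclic vector so that $M_x$ is presented by a scalar operator; its Newton polygon of spectral radii has a break exactly at $\rho_0$, and the horizontal idempotent $e\in\End(M_x)\ct\Hx$ cutting out $(M_x\ct\Hx)^{\geq\rho_0}$ can be written in terms of the solution matrix of $M_x\ct\Hx$. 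The Dwork--Robba growth estimate --- which uses crucially that the two groups of radii are separated by a genuine gap --- then shows that the entries of $e$, \emph{a priori} only elements of $\Hx$, are in fact germs of analytic functions convergent on a whole neighbourhood of $x$, i.e. $e\in\End_{\cO_{X,x}}(M_x)$. Then $M_x^{\geq\rho_0}:=\Im e$ and $M_x^{<\rho_0}:=\Ker e$ work, and $M_x^{\geq\rho_0}\ct\Hx=(M_x\ct\Hx)^{\geq\rho_0}$ holds by construction.

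For (1): by the uniqueness just obtained the decomposition is functorial, so applying it to $(M_x^*,\nabla_x^*)$ and using that $\RS_i(x,\cdot)$ is insensitive to duality (equivalently point (3) of Theorem~\ref{sec:robb-decomp-spectr}) identifies $(M_x^*)^\rho$ with $(M_x^\rho)^*$ after base change to $\Hx$, hence over $\cO_{X,x}$; chasing the canonical maps on $\Hx$-points shows the composite $(M_x^*)^{\geq\rho}\to M_x^*\to(M_x^{\geq\rho})^*$ is an isomorphism. For (2): a nonzero horizontal $\cO_{X,x}$-linear map $f\colon M_x^\rho\to N_x^{\rho'}$ with $\rho\neq\rho'$ is a nonzero horizontal element of the differential module $\hom_{\cO_{X,x}}(M_x^\rho,N_x^{\rho'})\cong(M_x^\rho)^*\ot N_x^{\rho'}$; but $(M_x^\rho)^*$ is pure of spectral radius $\rho$ and $N_x^{\rho'}$ pure of spectral radius $\rho'$, so by the behaviour of the spectral radii under tensor product (the radius of the tensor product of two modules of distinct radii $\rho<\rho'$ equals $\rho$) this differential module is pure of radius $\min(\rho,\rho')<1$; a nonzero horizontal element would generate a trivial sub-object, which has radius $1$, a contradiction. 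Hence $\hom_{\cO_{X,x}}(M_x^\rho,N_x^{\rho'})=0$.

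The hard part is the convergence step of the second paragraph: passing from ``$e$ is horizontal and bounded on the generic fibre'' to ``the entries of $e$ converge on a neighbourhood of $x$'' is exactly the Dwork--Robba estimate, and it is where the gap between the two groups of radii is indispensable (without a gap the off-diagonal block of the solution matrix need not decay fast enough to be annihilated analytically). The remaining points are routine by comparison: the reduction of the curve case to $\AF$ through $\psi$ --- including pushing the decomposition back down the finite cover --- and the handling of points of type $(4)$, where one describes $\cO_{X,x}$ through a decreasing basis of virtual discs, after a preliminary base change to $\kac$ if needed.
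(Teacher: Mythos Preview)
The paper does not prove this theorem: it is quoted in the preliminaries with an explicit citation to \cite[First thm.\ of Section~4]{DR77} and \cite[Theorem~4.1.1]{np3}, and no argument is given. So there is nothing in the paper to compare your proposal against.

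That said, your outline is a fair summary of the proof in the cited references: Robba's decomposition over $\Hx$ followed by the Dwork--Robba descent of the splitting idempotent to $\cO_{X,x}$, with the reduction from a curve to the line done via the \'etale map of Theorem~\ref{sec:spectr-radii-diff} exactly as in \cite{np3}. Two small remarks. First, your argument for (2) via tensor products is correct but more elaborate than necessary: a nonzero horizontal $f\colon M_x^\rho\to N_x^{\rho'}$ base-changes to a nonzero horizontal map over $\Hx$, whose image is simultaneously a subquotient of a module pure of radius $\rho$ and of one pure of radius $\rho'$, which already contradicts $\rho\neq\rho'$ by Theorem~\ref{sec:robb-decomp-spectr}. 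Second, you flag the type~(4) case and the push-back along the finite cover as ``routine'', which is honest but worth noting: in \cite{np3} these steps do require some care (the type~(4) case goes through the description of $\cO_{X,x}$ as a direct limit over a nested basis of discs, and the descent along $\psi$ uses that $\psi^{-1}(\psi(x))=\{x\}$ together with a trace argument), so if you were writing this out in full you would need to supply those details.
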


\begin{Theo}[{\cite[Theorem~5.3.1]{np3}}]\label{sec:robb-decomp-spectr-1}
 Assume that the index $i$ separates the radii of $\F$ over $X$. Then
 $\F$ admits a sub-object $(\F_{\geq i},\nabla_{\geq i})\subset
 (\F,\nabla)$ such that for all $x\in X$ one has
 \begin{enumerate}
 \item $\rank\F_{\geq
   i}=\dim_\Omega\omega_{\cS,i}(x,\F)=n-i+1$.
 \item For all $j=1,\cdots,n-i+1$ the canonical inclusion
  $\omega(x,\F_{\geq i})\subset \omega(x,\F)$ identifies
  $\omega_{\cS,j}(x,\F_{\geq i})= \omega_{\cS,j+i-1}(x,\F)$.
 \end{enumerate}
 Define $\F_{<i}$ by
 \begin{equation}
  \label{eq:31}
  0\to\F_{\geq i}\to \F \to \F_{<i}\to 0.
 \end{equation}
 Then, for $x\in X$, one has
 \begin{equation}
  \label{eq:32}
  \cR_{\cS,j}(x,\F)=\begin{cases}\cR_{\cS,j}(x,\F_{<i})& \text{if }
                              j=1,\cdots,
                              i-1\\
            \cR_{\cS,j-i+1}(x,\F_{\geq i})& \text{if } j=i,\cdots,n.\\
  \end{cases}
\end{equation}
\end{Theo}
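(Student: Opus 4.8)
The plan is to construct $\F_{\geq i}$ locally on $X$ and then glue. First I would record that such a sub-object is \emph{unique}: over a simply connected affinoid domain, a $\nabla$-stable locally free $\cO$-submodule of $\F$ whose horizontal sections on each sufficiently small disc $\disco{\Omega}{t_x}{r}$ span a space of dimension $n-i+1$ must be the one attached to the partial solution sheaf $\omega_{\cS,i}(\cdot,\F)$; hence two candidates agree on overlaps. Since the conclusions (1), (2) and the radius formula \eqref{eq:32} are pointwise statements, it then suffices to produce the exact sequence \eqref{eq:31} on the members of a locally finite cover of $X$ and invoke uniqueness.

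Next I would treat a point $x_0\in X$, splitting into two cases. If $x_0$ admits a virtual open disc or annulus as neighbourhood, then after base change to an algebraically closed $\Omega\in E(F)$ we are on an analytic domain of $\A{\Omega}$; using the continuity and piecewise log-affineness of $\cR_\cS$ (Theorem~\ref{sec:radii-conv-line}) I would shrink until $\cR_{\cS,i-1}<\cR_{\cS,i}$ holds on the whole piece, so that $\omega_{\cS,i}$ is locally free of rank $n-i+1$, stable under the induced connection, and its $\cO$-span inside $\F$ is the desired $\F_{\geq i}$. If instead $x_0$ is of type (2) with no such neighbourhood, I would apply the local Dwork--Robba decomposition Theorem~\ref{sec:robb-decomp-spectr-2} to the germ $(\F_{x_0},\nabla_{x_0})$ over $(\cO_{X,x_0},d)$, with $d=\psi^*(\dT)$ the bounded derivation coming from an étale morphism $\psi$ of Theorem~\ref{sec:spectr-radii-diff} chosen to resolve all the branches of interest at $x_0$. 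This exhibits the germ as $\bigoplus_{0<\rho\leq 1}(\F_{x_0}^{\rho},\nabla_{x_0}^{\rho})$, compatibly with Robba's decomposition (Theorem~\ref{sec:robb-decomp-spectr}) of $(\F_{x_0}\ct_{\cO_{X,x_0}}\Hx,\nabla)$; I would take $\F_{\geq i}$ to be the sub-sum over the $\rho$ accounting for the $i$-th through $n$-th radii at $x_0$, and extend the corresponding idempotent to an affinoid neighbourhood $Z$ of $x_0$.

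It then remains to check that this $\F_{\geq i}$ has rank $n-i+1$ and the correct solution sheaf at \emph{every} point of $Z$, which is where the global separation hypothesis enters: moving $x$ along any branch out of $x_0$, the inequality $\cR_{\cS,i-1}(x,\F)<\cR_{\cS,i}(x,\F)$ persists, so by continuity of $\cR_\cS$ no radius crosses the chosen threshold as $x\to x_0$ and the rank of the relevant germ-decomposition block stays constant on $Z$. Once the sub-object is in hand, part (2) --- the identification $\omega_{\cS,j}(x,\F_{\geq i})=\omega_{\cS,j+i-1}(x,\F)$ --- is read off from the construction, and \eqref{eq:32} follows because, since \eqref{eq:31} is an exact sequence of $\cO$-modules with connection which is locally split, the horizontal sections of $\F$ on $\disco{\Omega}{t_x}{r}$ split as those of $\F_{\geq i}$ and those of $\F_{<i}$, so each jump defining some $\cR_{\cS,j}(x,\F)$ comes from exactly one of the two factors.

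The hard part will be the gluing across the type-(2) points having no disc or annulus neighbourhood: there one has two a priori different descriptions of $\F_{\geq i}$ on the overlap of $Z$ with an adjacent annulus --- the Dwork--Robba idempotent on one side and the $\omega_{\cS,i}$-span on the other --- and must show they define the same submodule. Uniqueness does this, but only once one knows that $\dim\omega_{\cS,i}=n-i+1$ throughout the overlap, which again rests on the \emph{global} separation of the index $i$ together with Theorem~\ref{sec:radii-conv-line}. Carrying this matching out branch by branch, over a locally finite cover of $X$ by virtual discs, virtual annuli and affinoid neighbourhoods of the triangulation points, would complete the construction and hence the proof.
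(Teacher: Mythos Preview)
The paper does not prove this statement: Theorem~\ref{sec:robb-decomp-spectr-1} is quoted in the preliminaries from \cite[Theorem~5.3.1]{np3} (Poineau--Pulita) with no argument given, so there is nothing here to compare your sketch against. Your outline is in the spirit of the actual proof in \cite{np3}, which also proceeds by localising, invoking the Dwork--Robba germ decomposition (Theorem~\ref{sec:robb-decomp-spectr-2}) at the difficult points, and gluing via uniqueness.

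That said, two steps in your sketch would not go through as written. First, the sentence ``since \eqref{eq:31} is an exact sequence of $\cO$-modules with connection which is locally split, the horizontal sections of $\F$ on $\disco{\Omega}{t_x}{r}$ split as those of $\F_{\geq i}$ and those of $\F_{<i}$'' is wrong: the extension \eqref{eq:31} has no reason to split \emph{as differential modules}, even locally. The way \eqref{eq:32} is actually obtained is not via a splitting but via the long exact sequence of horizontal sections: on any disc of radius below $\cR_{\cS,i}(x,\F)$ the sub-object $\F_{\geq i}$ is trivial, so $H^1$ of $\F_{\geq i}$ vanishes there and horizontal sections of $\F$ surject onto those of $\F_{<i}$; a dimension count then forces the identification of radii. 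Second, on a virtual disc or annulus you write that ``$\omega_{\cS,i}$ is locally free of rank $n-i+1$ \dots\ and its $\cO$-span inside $\F$ is the desired $\F_{\geq i}$''. But $\omega_{\cS,i}(x,\F)$ is an $\Omega$-vector space of horizontal sections on a disc in $X_\Omega$ centred at a generic point $t_x$; it is not an $\cO_X$-submodule, and turning it into one requires exactly the kind of descent and spreading-out argument (together with Galois-invariance to undo the passage to $\kac$) that \cite{np3} carries out and that your sketch elides.
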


\begin{rem}
 We will see in Section~\ref{sec:refin-decomp-with} that, using the spectrum, we can improve slightly the decomposition.
\end{rem}
\subsection{Differential modules and spectral theory in the sense of Berkovich}\label{sec:spectr-theory-sense-2} Let us recall
now the definition of the spectrum of Berkovich. To avoid any
confusion we fix another coordinate function $S$ on $\AF$. Let $E$ be
a $F$-Banach algebra and $f\in E$. The spectrum of $f$ as an element
of $E$ is the set $\Sigma_{f,F}(E)$ of point $x\in \AF$ such that
$f\ot 1-1\ot S(x)$ is not invertible $E\ct_F\Hx$. If there is no
ambiguity, we may just use the notation $\Sigma_f$.

As in the complex case, the spectrum exhibits the same beneficial properties, such as it is a
nonempty compact set, the smallest closed disk centered at zero and
containing $\Sigma_f$ has radius equal to $\nsp{f}:=\lim\limits_{n\to
 \infty}\nor{f^n}^{\fra{n}}$ (cf. \cite[Theorem~7.1.2]{Ber}). Moreover, the resolvent function of $f$
defined as follows:
\begin{equation*}
 \Fonction{R_f:\;\AF\setminus \Sigma_f}
 {\coprod\limits_{x\in\AF\setminus\Sigma_f}E\ct_F\Hx}{x}{(f\ot 1-1\ot T(x))\-1}
\end{equation*}
is an {\it analytic function}, with value in $E$, equal to 0 at $\infty$. In
other words,
$R_f\in \cO_{\Pf}(E)(\Pf\setminus\Sigma_f)$, where $\cO_{\Pf}(E)$ is the
{\it sheaf of analytic function} over $\Pf$ with value in $E$, defined as
follows:
\begin{equation*}
 \cO_{\Pf}(E)(U)=\varprojlim\limits_{V\subset U}E\ct_F\cA_V
\end{equation*}
where $U$ is an open set of $\Pf$, $V$ an affinoid domain of $\Pf$ and
$A_V$ the associated $F$-affinoid algebra. 

\begin{Lem}[{\cite[Lemma~2.20]{Azz20}}]\label{sec:spectr-theory-sense}
If $a\in \AF\setminus\Sigma_f$ then the largest open disk centered at
$a$ and contained in $\AF\setminus\Sigma_f$ has radius $R=\nsp{(f-a)\-1}\-1$. 
\end{Lem}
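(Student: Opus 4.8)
The plan is to treat this as the non-archimedean counterpart of the Cauchy--Hadamard description of the domain of a resolvent. I would set $R:=\nsp{(f-a)\-1}\-1$, where $(f-a)\-1$ denotes the value $R_f(a)\in E\ct_F\h{a}$; since $\Sigma_f$ is nonempty, the inclusion in the next paragraph forces $\nsp{(f-a)\-1}\neq 0$, hence $0<R<+\infty$. The two tools I would use throughout are the spectral-radius formula $\nsp{g}=\lim_n\nor{g^n}^{1/n}$ (the limit existing by submultiplicativity of $\nor{\cdot}$) and the fact that, over a non-archimedean field, a series in a Banach algebra converges once its general term tends to $0$. I would also use, from the discussion recalled above, that $R_f\in\cO_{\Pf}(E)(\Pf\setminus\Sigma_f)$, and that on an open disc $\diso{a}{\rho}$ with $a$ rational the ring $\cO_{\Pf}(E)(\diso{a}{\rho})$ consists of the power series $\sum_n c_n(S-a)^n$, $c_n\in E$, with $\nor{c_n}s^n\to 0$ for every $s<\rho$, the expansion being unique.

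First I would show $\diso{a}{R}\subseteq\AF\setminus\Sigma_f$. Assuming $a$ rational, take $b\in\diso{a}{R}$, so $|S(b)-a|<R$, and write $f\ot 1-1\ot S(b)=g-h$ in $E\ct_F\h{b}$ with $g=(f-a)\ot 1$ and $h=1\ot(S(b)-a)$. Then $\nor{(g^{-1}h)^n}=\nor{(f-a)^{-n}}\,|S(b)-a|^n$ and $\nor{(f-a)^{-n}}^{1/n}|S(b)-a|\to|S(b)-a|/R<1$, so $(g^{-1}h)^n\to 0$, the Neumann series $\sum_n(g^{-1}h)^n$ converges, and $g-h=g\,(1-g^{-1}h)$ is invertible; hence $b\notin\Sigma_f$. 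Summing this series also identifies $R_f$ on $\diso{a}{R}$ with $\sum_{n\geq 0}(f-a)^{-(n+1)}(S-a)^n$, which does lie in $\cO_{\Pf}(E)(\diso{a}{R})$ since $\nor{(f-a)^{-(n+1)}}s^n\to 0$ for $s<R$. For a general point $a$, I would reduce to the rational case by extending scalars to a suitable $\Omega\in E(F)$ over which $a$ is represented by a rational point: the isometry of the completed tensor norm leaves $\nor{(f-a)^{-n}}$, hence $\nsp{(f-a)\-1}$ and $R$, unchanged, and the behaviour of the Berkovich spectrum under extension of the base field (from \cite{Azz20}) transports the conclusion back to $F$.

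Next I would prove maximality. If $\diso{a}{R'}\subseteq\AF\setminus\Sigma_f$ with $R'>R$ (again taking $a$ rational), then $R_f$ restricts to a section of $\cO_{\Pf}(E)$ over $\diso{a}{R'}$, hence to a single power series $\sum_n c_n(S-a)^n$ with $\nor{c_n}s^n\to 0$ for all $s<R'$; restricting to $\diso{a}{R}$ and using uniqueness of the expansion together with the previous step gives $c_n=(f-a)^{-(n+1)}$. Thus $\limsup_n\nor{(f-a)^{-(n+1)}}^{1/n}\leq 1/R'$, while $\nsp{(f-a)\-1}>0$ yields $\lim_n\nor{(f-a)^{-(n+1)}}^{1/n}=\lim_n\nor{(f-a)^{-(n+1)}}^{1/(n+1)}=\nsp{(f-a)\-1}=1/R$, so $1/R\leq 1/R'<1/R$, a contradiction. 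Hence the largest open disc centered at $a$ inside $\AF\setminus\Sigma_f$ has radius exactly $R$.

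The Neumann-series computations are routine, essentially copied from the complex case; the main obstacle is the first step for a point $a$ that is not of type (1). There one must check that the base extension $F\hookrightarrow\Omega$ turning $a$ into a rational point leaves both $\nsp{(f-a)\-1}$ and the radius of the relevant disc unchanged, and that non-invertibility of $f\ot 1-1\ot S(b)$ in $E\ct_F\h{b}$ can be detected after enlarging the residue field --- precisely the compatibility of $\Sigma_f$ with extension of the ground field. Everything else is elementary.
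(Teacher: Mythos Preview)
The paper does not supply its own proof of this lemma; it is quoted with a citation to \cite[Lemma~2.20]{Azz20} and used later (in the proof of Theorem~\ref{sec:continuity-results-1}) only at a rational point $T(x)\in\A{\Hx}$. So there is no in-paper argument to compare against, and I can only assess your proof on its own.

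Your argument is the natural one and is correct. Two small remarks. First, your justification that $R\in(0,+\infty)$ is circular as written (it invokes the inclusion you are about to prove); a cleaner direct argument is that $(f-a)^{-1}$ is invertible with inverse $f-a$, so from $1=\nor{(f-a)^{-n}(f-a)^n}\leq\nor{(f-a)^{-n}}\,\nor{f-a}^n$ one gets $\nsp{(f-a)^{-1}}\geq\nor{f-a}^{-1}>0$. Second, your reduction of the non-rational case by base change is fine in principle, but note that the only use of the lemma in this paper is already at a rational point after extension of scalars, so that reduction is not even needed here. The two ingredients you cite --- that $\Sigma_{f\otimes 1,\Omega}(E\ct_F\Omega)=\pi_{\Omega/F}^{-1}(\Sigma_{f,F}(E))$ and that the spectral seminorm $\nsp{g}=\lim_n\nor{g^n}^{1/n}$ is unchanged under an isometric embedding --- are exactly what is required; the former appears in the paper as \cite[Theorem~7.1.6]{Ber}.
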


Let us explain now in which sense we associate a spectrum in the
sense of Berkovich to a differential equation over a quasi-smooth analytic curve. Recall that given $(\Omega,d)$ with
$\Omega\in E(F)$ and $d:\Omega\to \Omega$ be a nonzero bounded $F$-linear
derivation, the spectrum of a differential module $(M,\nabla)$ over
$(\Omega,d)$ is the spectrum of $\nabla$ as a bounded $F$-linear
operator. This means that the spectrum depends highly on the
choice of $d$. In our paper \cite{Azz24}, assuming that $F$ is
algebraically closed, we find out that there exists a family
of derivations for which we were able to determine the spectrum of
any differential module $(M,\nabla)$. Moreover, in that case the
spectrum provides the data of spectral radii of convergence and the
formal exponent of the equation. We recall this result after
providing some additional notations.
 
Let $(\F,\nabla)$ be a differential equation over a quasi-smooth
analytic curve $X$. Let $x\in X$, then there exists an open
neighborhood $U$ of $x$ such that there exist an isomorphism of
$\cO_{X|_U}$-module between $\Omega_{X/F|_U}$ and $\cO_{X|_U}$, which
is bounded on affinoid domains. By choosing an isomorphism we define a
nonzero
$F$-linear 
derivation $d:\cO_{X|_U}\to \cO_{X|_U}$, which is bounded on
affinoid domains. For $x\in \Xh$ the derivation
$d$ extends to a nonzero bounded $F$-linear derivation on $\Hx$. By
composing $\nabla$ with the isomorphism, for all $V\subset U$ open
subset the pair $(\F(V),\nabla)$ is a differential module over
$(\cO_X(V),d)$, moreover by scalar extension
$(\F(x),\nabla_d):=(\F_x\ot_{\cO_{X,x}}\Hx,\nabla)$ is a differential
module over $(\Hx,d)$.   

\begin{rem}
 Any nonzero $F$-linear derivation $D$ on $\cO_{X|_U}$, bounded on the affinoid domains, is
 equal to $g.d$, where $g\in \cO_{X}(U)\setminus\{0\}$. Moreover, we can find a
 smaller open subset $V$ of $U$ such that the derivation $D$ is
 induced by an isomorphism $\Omega_{X/F|_V}\simeq\cO_{X|_V}$ that is
 bounded on the affinoid domains. Therefore, if $D$ is such derivation,
 well defined on a neighborhood of $x\in \Xh$, then we can consider
 $(\F(x),\nabla_D)$ the differential module over $(\Hx,D)$, in other
 words the choice of this derivation will not change the differential
 equation around $x$.  
\end{rem}

Suppose now that $X$ is an analytic domain of $\AF$, for 
simplicity we may assume that it is connected. Let $x=x_{c,r}\in X_{[2,3]}$. Let $f\in
\Hx$, such that $f^{\sharp}:\Hx\to \Hx$, $T-c\mapsto f$ is a well defined isomorphism in the category $E(F)$. We set $d_f:=f\D{f}$. 
\begin{rem}
 For $a\in F$, the spectral radius of convergence of $(\Hx,f\D{f}-a)$
 is equal to $\cR(\delta(a))$.
\end{rem}
 \begin{Theo}[{\cite[Theorem~1.1]{Azz24}}]\label{sec:spectr-theory-sense-1}
 Assume that $F$ is algebraically closed and $\crk=p>0$. Let
 $(M,\nabla)$ be a differential module $(\Hx, f\D{f})$. Let
 $\phi:\AF\to \AF$, $T-c\mapsto (T-c)^p$. We denote $\phi^l(x)$ by
 $x^{p^l}$. We use notations of \eqref{eq:93} and \eqref{eq:94}.
\begin{enumerate}

\item There exist $z_1,\cdots,z_{\nu}\in \AF\setminus F$ and
 $a_1,\cdots, a_\mu\in F$, such that
 \[\Sigma_{\nabla,F}(\Lk{M})=\{z_1,\cdots,z_\nu, a_1,\cdots,
  a_\mu\}+\ZZ_p.\]
 where $z_i$ has the same type as $x$, and $(\nu,\mu)$ is not equal $(0,0)$.
\item We can choose $z_i$ and $a_i$ such that the set $\{z_1,\cdots,z_\nu, a_1,\cdots,
  a_\mu\}$ has a minimal cardinality. Indeed it is enough to keep only
  $z_i$ and $a_i$, for which we have $\{z_i\}+\ZZ_p\cap
  \{z_j\}+\ZZ_p=\emptyset$ and $\{a_i\}+\ZZ_p\cap \{a_j\}+\ZZ_p=\emptyset$
  for $i\ne j$.
 
 \item We choose $\{z_1,\cdots,z_\nu, a_1,\cdots,
  a_\mu\}$ to be minimal. Then we have a unique (up to an isomorphism) decomposition
  \begin{equation}
  \label{eq:86}
   (M,\nabla)=\bigoplus_{i=1}^{\nu}(M_{z_i},\nabla_{z_i})\oplus \bigoplus_{j=1}^{\mu}(M_{a_j},\nabla_{a_j}),
  \end{equation}
  where $(M_{z_i},\nabla_{z_i})$ and $(M_{a_j},\nabla_{a_j})$ are
  differential modules over $(\Hx,f\D{f})$, such that $\Sigma_{\nabla_{z_i},F}(\Lk{M_{z_i}})=\{z_i\}+\ZZ_p$
  and $\Sigma_{\nabla_{a_j},F}(\Lk{M_{a_j}})=\{a_j\}+\ZZ_p$.
 \item For all $a\in F$, the differential module
  $(M_{z_i},\nabla_{z_i})$ (resp. $(M_{a_j},\nabla_{a_j})$) is pure.
  \item Let $c_i\in F$ and $r_i>0$ such that $z_i=x_{c_i,r_i}$. If
  $|p|^{l}\leq r_i< |p|^{l-1}$, with $l\in \NN\setminus\{0\}$, then $\car(\{z_i\}+\ZZ_p)=p^l$ and $\{z_i\}+\ZZ_p=\{x_{c_i,r_i},
  x_{c_i+1,r_i},\cdots, x_{c_i+p^l-1,r_i}\}$. If $r_i\geq1$, $\car(\{z_i\}+\ZZ_p)=1$ and
  $\{z_i\}+\ZZ_p=\{x_{c_i,r_i}\}$.
 \item If $r_i>1$, let $P_{z_i}(f\D{f})$ be a differential polynomial
  associated to $(M_{z_i},\nabla_{z_i})$. Then the image by
  $\pi_{\widehat{\Hx^{alg}}/F}$ of all roots of $P_{z_i}(S)$ (the
  commutative polynomial associated to $P_{z_i}(f\D{f})$) is equal to $z_i$. 
 \item If $|p|^l<r_i\leq|p|^{l-1}$, let $P_{z_i}(p^l(T-c)\dT)$ be a differential
  polynomial associated to $(\phi^l\circ f)_*(M,\nabla)$ (as a differential
  module over $(\h{x^{p^l}},p^l(T-c)\dT)$). Then the image by
  $\pi_{\widehat{\h{x^{p^l}}^{alg}}/F}$ of all roots of $P_{z_i}(S)$
  (the commutative polynomial associated to $P_{z_i}(p^l(T-c)\dT)$) is equal to $\{x_{c_i,r_i},
  x_{c_i+1,r_i},\cdots, x_{c_i+p^l-1,r_i}\}$. In the special case
  where $r_i=|p|^{l-1}$ we have $\{x_{c_i,r_i},
  x_{c_i+1,r_i},\cdots, x_{c_i+p^l-1,r_i}\}=\{x_{c_i,r_i},
  x_{c_i+1,r_i},\cdots, x_{c_i+p^{l-1}-1,r_i}\}$.

 \item If $r_i\geq 1$. For $a\in \disf{c_i}{r_i}\cap F$ we have
  \begin{equation}
  \label{eq:87}
   \RS_1(x,(M_{z_i},\nabla_{z_i}-a))=\frac{\omega}{r_i},
  \end{equation}
  and for all $a\in F\setminus \disf{c_i}{r_i}$
  \begin{equation}
 \label{eq:88}
   \RS_1(x,(M_{z_i},\nabla_{z_i}-a))=\cR(\delta({a-c_i}))=\frac{\omega}{|a-c_i|}.
  \end{equation}
 \item If $|p|^l\leq r_i<|p|^{l-1}$. For all $a\in \bigcup_{j=0}^{p^l-1} \disf{c_i+j}{r_i}\cap
  F$ we have
  \begin{equation}
  \label{eq:89}
   \RS_1(x,(M_{z_i},\nabla_{z_i}-a))=\left(\frac{|p|^l\omega}{r_i}\right)^{\fra{p^l}},
  \end{equation}
  and for all $a\in F\setminus \bigcup_{j=0}^{p^l-1} \disf{c_i+j}{r_i}$
  \begin{equation}
  \label{eq:90}
   \RS_1(x,(M_{z_i},\nabla_{z_i}-a))=\cR(\delta({a-c_i})).
  \end{equation}
 \item For all $a\in
  \{a_j\}+\ZZ_p$, $(M_{a_j},\nabla_{a_j})$ is solvable, and for all $a\in
  F\setminus \{a_j\}+\ZZ_p$, we have $\RS_1(x,(M_{a_j},\nabla_{a_j}-a))=\cR(\delta({a-a_j}))$. 
\end{enumerate}
 \end{Theo}

 \begin{rem}
  From this theorem, we observe that the spectrum depends on the
  choice of $f$. 
 \end{rem}

One of the aim of this paper is to generalize
 Theorem~\ref{sec:spectr-theory-sense-1} for differential equations
 over a quasi-smooth analytic curve $X$, and study some of its
 applications. If $\cS$ is a weak triangulation of $X$, then each point $x$
 of $X\setminus \cS$ has a neighborhood isomorphic to an analytic
 domain of $\AF$, therefore the theorem extends easily for theses
 points. The problem now is for points $x$ of type (2) of $\cS$, how can we choose a
 derivation with respect to the branches $b\in T_xX$. Furthermore, as we observed, 
 even missing branches can provide information about the equation. In the
 case of curves, how can we find these missing branches. Our strategy to deal with these issues is inspired from the works of J. Poineau and
 A. Pulita \cite{and,np2}. That is why our first step is to study the
 continuity and the variation of
 the spectrum over a segment of the form $(c,x]\subset \AF$, with $c\in
 F$. For that, the next section explain the meaning of the
 continuity of the spectrum.

\section{Continuity of the spectrum of an ultrametric operator}\label{sec:cont-spectr-an}
 Let $\cK(\AF)$ be the set of nonempty compact subsets of
   $\AF$. We endow $\cK(\AF)$ with the exponential topology,
   generated by the basis composed by all sets of the following form:
   \begin{equation}
    \label{eq:1}
    (U,\{U_i\}_{i\in I}):=\{\Sigma\in\cK(\AF);\; \Sigma\subset U,\;
    \Sigma\cap U_i\ne\emptyset\; \forall i\},
   \end{equation}
  where $U$ is an open of $\AF$ and $\{U_i\}_{i\in I}$ is a finite
  open cover of $U$. In this case, since $\AF$ is a Hausdorff space,
  then so is for $\cK(\AF)$. For this topology, the union of two compact
  sets is a continuous map. For further properties about this
  topology we can refer to \cite[Section~5]{Azz20}.
  \begin{nota}
     For a non-zero $F$-Banach algebra $E$, we set
     \begin{equation}
      \label{eq:10}
      \Fonction{\Sigma_{.,F}(E):\; E}{\cK(\AF)}{f}{\Sigma_{f,F}(E),}
     \end{equation}
     where $\Sigma_{f,F}(E)$ is the spectrum of $f$ as an element
     of $E$.
     
    \end{nota}
The aim of this section is the study the continuity of
$\Sigma_{.,F}(E)$. Since the spectral semi-norm $\nsp{.}: E\to \R+$ may not be continuous, the map $\Sigma_{,F}(E)$ may
    fail to be continuous. For that we provide the following example:

    \begin{exam}\label{sec:cont-spectr-an-1}
     This example is inspired by the example given in
     \cite[p. 34]{aup}. Assume that $\crk=p>0$. Let
     $E:=\Lk{F\{T\}}$. Let $\mathrm{Sh}\in E$ be the operator defined by 
     \[ \forall n\in \NN,\qq \mathrm{Sh}(T^n)=\alpha_n T^{n+1},\]
     where
     \[\alpha_n=
      \begin{cases}
       p^\ell& \text{ if } n=2^\ell(2m+1)-1 \text{ with } \ell\in
       \NN\setminus\{0\},\; m\in \NN\\
       1& \text{ otherwise}\\
       
      \end{cases}.\]
     For $\ell\in\NN\setminus\{0\}$, let $\mathrm{S}_\ell\in E$ be the operator
     defined by
     
     \[\forall n\in \NN,\qq \mathrm{S}_\ell(T^n)=
      \begin{cases}
       0 & \text{ if } n=2^\ell(2m+1)-1 \text{ with } m\in \NN\\
       \mathrm{Sh}(T^n) & \text{ otherwise}\\
       
      \end{cases}.\]
Then we have
    $(\mathrm{Sh-S}_\ell)(\sum_{i\in\NN}a_iT^i)=\sum_{m\in\NN}p^\ell
    a_{2^\ell(2m+1)-1}T^{2^\ell(2m+1)}$. Therefore,
    $\nor{\mathrm{Sh-S_\ell}}\leq |p|^\ell$, and we obtain $\mathrm{S}_\ell\overset{\ell\to \infty}{\to}
    \mathrm{Sh}$. On one hand we have $\mathrm{S}_\ell^{2^\ell}=0$. Hence,
    $\nsp{\mathrm{S}_\ell}=0$ and $\Sigma_{\mathrm{S}_\ell,F}(E)=\{0\}$. On the other hand, since
    $\mathrm{Sh}^{n'}(T^n)=\alpha_n\alpha_{n+1}\cdots
    \alpha_{n+n'-1}T^{n+n'}$, we have \[\nor{\mathrm{Sh}^{n'}}=\sup_{n\in\NN}|\alpha_n\alpha_{n+1}\cdots
    \alpha_{n+n'-1}|.\] By construction we have:
    \[\alpha_0\alpha_2\cdots
     \alpha_{2^\ell-2}=\prod_{j=1}^{\ell-1}p^{j2^{\ell-j-1}}=(p^{\sum_{j=1}^{\ell-1}j2^{-j-1}})^{2^\ell},\]
    thus, as $2>\frac{2^\ell}{2^\ell-1}$ we have
   \[|\alpha_0\alpha_2\cdots
     \alpha_{2^\ell-2}|^{\fra{2^\ell-1}}>
     (p^{-\sum_{j=1}^{\ell-1}j2^{-j-1}})^2.\]
    We set $\sigma=\sum_{j=1}^\infty j2^{-j-1}=4$. Then we obtain
    \[ 0<p^{-2\sigma}\leq \lim\limits_{\ell\to
      \infty}\nor{\mathrm{Sh}^{2^\ell-1}}^{\fra{2^\ell-1}}=\nsp{\mathrm{Sh}}.\]
    Consequently, the sequence $(\Sigma_{\mathrm{S}_\ell,F}(E))_{\ell\in \NN}$
    does not converge to $\Sigma_{\mathrm{Sh},F}(E)$ and $\Sigma_{.,F}(E)$
    is not continuous at $\mathrm{Sh}$.
     
   \end{exam}

\subsection{Some continuity results on the spectrum}

\subsubsection{The Cauchy integral} To make the reader comfortable, we explain notations involved in the formula of Cauchy integral (cf. \eqref{eq:16}), already
presented in \cite[Chapter
 8]{Ber}.
 \begin{conv}
  We assume that $F$ is algebraically closed.
 \end{conv}

 \begin{Defi}
  Let $X$ be an analytic domain of $\Pf$. Let $E$ be an $F$-Banach algebra. Let $\Sigma$ be a compact subset of $X$. The $F$-algebra of
  analytic functions defined in a neighborhood of $\Sigma$ with
  value in $E$ is defined as follows:
  \[\cO_X(E)(\Sigma):=\varinjlim_{\Sigma\subset U}\cO_X(E)(U)\]
  where $U$ is an open subset of $X$. We endow $\cO_X(E)(\Sigma)$ with
  the semi-norm \[\nor{f}_\Sigma:=\max_{x\in\Sigma}|f(x)|.\]
 We set $\cO_X(\Sigma):=\cO_X(F)(\Sigma)$.
 \end{Defi}

 \begin{nota}
  Let $V$ be an affinoid domain of $\Pf$. If $\infty\in V$, we set
  \[(\cA_V)_0:=\{f\in \cA_V|\; f(\infty)=0\}.\]
  Otherwise, we set $(\cA_V)_0:=\cA_V$.
 \end{nota}

 \begin{nota}
  Let $\Sigma$ be a compact (resp. open) subset of $\Pf$. If $\infty\in \Sigma$, we set
  \[(\cO_{\Pf}(E)(\Sigma))_0:=\{f\in \cO_{\Pf}(E)(\Sigma)|\; f(\infty)=0\}.\]
  Otherwise, we set $(\cO_{\Pf}(E)(\Sigma))_0:=\cO_{\Pf}(E)(\Sigma)$. 
 \end{nota}

 \begin{Theo}[Holomorphic functional calculus,{\cite[Corollary~7.3.4]{Ber}}]\label{sec:notat-defin}
  Let $E$ be a nonzero $F$-Banach algebra and let $f\in E$. The
  morphism of $F$-algebras
  $F[T]\to E$, which assigns $f$ to $T$, extends to a bounded morphism
  \[\cO_{\AF}(\Sigma_{f,F}(E))\to E.\]
  The image of an element $g\in\cO_{\AF}(\Sigma_{f,F}(E))$ will be
  denoted by $g(f)$.
 \end{Theo}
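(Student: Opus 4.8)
The plan is to transplant the classical Riesz-Dunford holomorphic functional calculus to the Berkovich setting, using the non-archimedean Cauchy integral recalled from \cite[Chapter~8]{Ber} and the resolvent $R_f\in\cO_{\Pf}(E)(\Pf\setminus\Sigma_f)$ introduced above (I write $\Sigma_f:=\Sigma_{f,F}(E)$). Given a germ $g\in\cO_{\AF}(\Sigma_f)$, represented by an analytic function on an open set $U\supset\Sigma_f$, I would first use compactness of $\Sigma_f$ to choose an affinoid domain $W$ of $\Pf$ with $\Sigma_f\subset W^{\circ}$ (the topological interior) and $W\subset U$, hence $\infty\notin W$. Its boundary $\partial W$ is then a finite set of points of type $2$ or $3$, disjoint from $\Sigma_f$, along which $g$ and $R_f$ are both analytic, so that $g\cdot R_f$ is an $E$-valued analytic function near $\partial W$. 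I would then \emph{define} the extension by
\[
 g(f):=\int_{\partial W}g(T)\,R_f\,\mathrm{d}T,
\]
the Cauchy integral with the orientation induced by $W$; it then remains to check that this is well defined, bounded, and a unital $F$-algebra morphism sending $T$ to $f$.

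For well-definedness, if $W'\subset W^{\circ}$ is a second affinoid of the same kind, then the $E$-valued form $g\,R_f\,\mathrm{d}T$ is analytic on the affinoid $W\setminus(W')^{\circ}$, which is disjoint from $\Sigma_f$ (because $\Sigma_f\subset(W')^{\circ}$) and contained in $U$. The Stokes/residue theorem of \cite[Chapter~8]{Ber} for Banach-algebra-valued closed $1$-forms then gives $\int_{\partial W}g\,R_f\,\mathrm{d}T=\int_{\partial W'}g\,R_f\,\mathrm{d}T$, since $\partial W$ and $\partial W'$ occur with opposite orientations in $\partial(W\setminus(W')^{\circ})$; an arbitrary pair of admissible domains is compared via a common refinement, and two representatives of the germ $g$ by shrinking $U$. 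Boundedness follows from the estimate of the Cauchy integral by the ``mass'' of $\partial W$ times $\max_{\partial W}\nor{g\,R_f}$: since $\partial W$ can be pushed into an arbitrarily small neighbourhood of $\Sigma_f$ while $\nor{R_f}$ stays controlled there, one obtains a bound $\nor{g(f)}\le C\,\nor{g}_{\Sigma_f}$.

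To see that $g\mapsto g(f)$ is a unital algebra morphism extending $T\mapsto f$, I would verify the identities on a dense subalgebra and pass to the limit. Rational functions with poles outside $\Sigma_f$ are dense in $\cO_{\AF}(\Sigma_f)$ (a Runge-type statement: the affinoid algebras $\cA_W$ are completions of such functions), and on them the morphism relations are forced, since $F[T]\to E$, $T\mapsto f$, localizes at the multiplicative set $\{T-a:a\notin\Sigma_f\}$, these elements mapping to the units $f-a$. One then checks that the integral-defined map agrees with this localized one: for $g=1$ and $g=T$, take $W=\disf{0}{\rho}$ with $\rho>\nsp{f}$, expand $R_f=\sum_{n\ge0}(1\otimes f^{n})(T\otimes1)^{-n-1}$ on $\partial W=\{x_{0,\rho}\}$, and integrate term by term, so that the non-archimedean residue yields $1(f)=1$ and $T(f)=f$; for $g=(T-a)^{-1}$ with $a\notin\Sigma_f$, the Cauchy integral picks out the value of $R_f$ at its pole $a$ and returns $(f-a)^{-1}$; multiplicativity comes, exactly as in the complex-analytic proof, from the resolvent identity $R_f(z)R_f(w)=(w-z)^{-1}\bigl(R_f(z)-R_f(w)\bigr)$ together with Fubini for the iterated integral. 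Continuity, via the bound above, then extends the morphism to all of $\cO_{\AF}(\Sigma_f)$.

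The substantive difficulty is not this formal skeleton, which is the archimedean one, but handling Berkovich's Chapter~8 integration of $E$-valued $1$-forms on analytic curves: fixing the orientation and ``mass'' conventions at the finitely many points of $\partial W$, proving the Stokes-type vanishing that yields independence of $W$, and controlling convergence so that term-by-term integration — and hence the computations $1(f)=1$, $T(f)=f$ and multiplicativity — is legitimate. In other words, the real content is that $\cO_{\Pf}(E)$ genuinely carries a residue calculus valued in the Banach algebra $E$; once that machinery is granted, the extension and all of its properties are a transcription of the classical argument, which is why one may simply invoke \cite[Corollary~7.3.4]{Ber}.
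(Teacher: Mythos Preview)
The paper does not prove this statement at all: it is quoted verbatim as \cite[Corollary~7.3.4]{Ber} and used as a black box, so there is no ``paper's own proof'' to compare your proposal against. Your sketch is a reasonable outline of the classical Riesz--Dunford construction transported to the Berkovich setting, and indeed you yourself conclude by saying one may simply invoke the cited reference; that is exactly what the paper does.
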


 \begin{Theo}[Shilov Idempotent Theorem, {\cite[Theorem~7.4.1]{Ber}}]\label{sec:notat-defin-2}
  Let $E$ be a nonzero commutative $F$-Banach algebra, and let $\cU$ be an
  open and closed subset of $\cM(E)$. Then there exists a unique
  idempotent $e\in E$ which is equal to $1$ on $\cU$ and $0$ outside $\cU$. 
  
 \end{Theo}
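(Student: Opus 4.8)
The plan is to transplant the classical Arens--Calder\'on argument into Berkovich's functional calculus. Set $\cV:=\cM(E)\setminus\cU$, so that $\cM(E)=\cU\sqcup\cV$ is a partition into two disjoint compact sets. For $x\in\cM(E)$ write $\chi_x\colon E\to\h{x}$ for the associated bounded character; requiring $e$ to be $1$ on $\cU$ and $0$ on $\cV$ then means $\chi_x(e)=1$ for $x\in\cU$ and $\chi_x(e)=0$ for $x\in\cV$. For a finite family $f_1,\dots,f_m\in E$ consider the continuous map
\[
\phi:=(\chi_\bullet(f_1),\dots,\chi_\bullet(f_m))\colon\cM(E)\longrightarrow\An{F}{m},
\]
sending $x$ to the point of $\An{F}{m}$ attached to the multiplicative seminorm $P\mapsto\abs{P(\chi_x(f_1),\dots,\chi_x(f_m))}$ on $F[T_1,\dots,T_m]$; since $E$ is commutative its image is the joint spectrum of $(f_1,\dots,f_m)$, and the character $\chi_x$ of an analytic function of the $f_i$ is that function evaluated at $\phi(x)$. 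As the collection of all elements of $E$ separates the points of $\cM(E)$, for every $(x,y)\in\cU\times\cV$ there is an $f\in E$ with $\abs{f(x)}\ne\abs{f(y)}$; the locus in $\cM(E)\times\cM(E)$ where a fixed $f$ achieves this is open, and these loci cover the compact set $\cU\times\cV$. Hence finitely many $f_1,\dots,f_m$ already give a $\phi$ with $\phi(\cU)\cap\phi(\cV)=\emptyset$.

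First I would build the idempotent. The images $\phi(\cU)$ and $\phi(\cV)$ are disjoint compact subsets of the locally compact Hausdorff space $\An{F}{m}$, hence admit disjoint open neighbourhoods $V_1\supseteq\phi(\cU)$ and $V_2\supseteq\phi(\cV)$. The function $g$ equal to $1$ on $V_1$ and to $0$ on $V_2$ is analytic on $V_1\sqcup V_2$, belongs to $\cO_{\An{F}{m}}(\phi(\cM(E)))$ because the joint spectrum of the tuple lies in $V_1\sqcup V_2$, and satisfies $g^2=g$. Applying the holomorphic functional calculus in several variables (the multivariate counterpart of Theorem~\ref{sec:notat-defin}, available in \cite{Ber}), which provides a bounded morphism of $F$-algebras $\cO_{\An{F}{m}}(\phi(\cM(E)))\to E$, I set $e:=g(f_1,\dots,f_m)\in E$. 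Then $e^2=(g^2)(f_1,\dots,f_m)=e$, and compatibility of the calculus with the characters gives $\chi_x(e)=g(\phi(x))$, which equals $1$ for $x\in\cU$ and $0$ for $x\in\cV$. Thus $e$ is the required idempotent.

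For uniqueness, let $e,e'$ be two such idempotents and put $h:=e-e'$. Since $E$ is commutative, $e$ and $e'$ commute, and expanding using $e^2=e$ and $e'^2=e'$ yields $h^3=h$. On the other hand $\chi_x(h)=0$ for every $x\in\cM(E)$, so $\abs{h(x)}=0$ for all $x$ and hence $\nsp{h}=0$; therefore $h$ is topologically nilpotent, $1-h^2$ is invertible, and $h(h^2-1)=h^3-h=0$ forces $h=0$.

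I expect the main obstacle to be the several-variable holomorphic functional calculus over $F$ itself, together with the identification of the joint spectrum of a commuting tuple with the image of the character map. This is exactly the step that forces one to work with a tuple rather than a single element: two disjoint compact subsets of $\AF$ cannot in general be realised as $\chi_\bullet(f)(\cU)$ and $\chi_\bullet(f)(\cV)$ for one $f$, whereas a tuple separating $\cU$ from $\cV$ inside $\An{F}{m}$ is produced by mere point-separation and compactness. Granting that calculus, the remaining ingredients---the compactness argument for the tuple, the locally constant analytic function $g$, and the identity $h^3=h$ for uniqueness---are routine.
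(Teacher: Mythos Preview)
The paper does not supply a proof of this theorem; it is quoted verbatim from Berkovich's book \cite[Theorem~7.4.1]{Ber} and used as a black box. Your argument is essentially the one Berkovich gives there: reduce to a finite tuple of elements by a compactness/separation argument on $\cM(E)$, apply the several-variable holomorphic functional calculus to the characteristic function of an open-and-closed neighbourhood of the image of $\cU$ in $\An{F}{m}$, and deduce uniqueness from the observation that the difference $h=e-e'$ of two candidate idempotents is quasi-nilpotent yet satisfies $h^3=h$. The only step you rightly flag as nontrivial---the multivariate holomorphic functional calculus and the identification of the joint spectrum with the image of the character map---is precisely what Berkovich develops in Chapter~7 of \cite{Ber}, so invoking it here is legitimate. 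There is nothing to compare against in the present paper beyond the citation itself.
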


 A closed disk $D$ of $\Pf$ is a set of the form $\disf{a}{r}$
 or $\Pf\setminus \diso{a}{r}$ with $a\in F$. Its radius $r(D)$ is
 equal to $r$. Any affinoid domain of $\Pf$ is a finite
 disjoint
 union of connected affinoid domains. Note that any connected affinoid domain $V\subset\Pf$ is a
 finite intersection of closed disks of $\Pf$. Moreover, we can
 represent $V$ uniquely as $V=\bigcap_{i=0}^\mu D_i$, where $D_i$ are closed disks in $\Pf$ with
 $D_i\not\subset D_j$ for any $i\ne j$.

 \begin{nota}
  Let $V$ be a connected affinoid domain of $\Pf$. Let $V=\bigcap_{i=0}^\mu
  D_i$ be the unique representation introduced above. We set
  \[\EE(V):=\{D_0,\cdots, D_\mu\}.\]
  Let $V$ be an affinoid domain of $\Pf$ and let $V_1,\cdots, V_\nu$
  be its connected components. We
  set \[\EE(V):=\bigcup_{i=1}^\nu \EE(V_i).\] 
 \end{nota}

 \begin{rem}
  In the case where $V$ is not connected, we can find $D$ and $D'$ in
  $\EE(V)$ such that $D\subset D'$. For example $(\disf{0}{2}\cap(\Pf\setminus\diso{0}{1}))\cup \disf{0}{\fra{2}}$.
 \end{rem}

 \begin{nota}
  We denote by $\Af$ the set of affinoid domain of $\Pf$ and
  by $\Af_c$ the set of connected affinoid domain of $\Pf$. The set
  of finite unions of closed disks which are not equal to $\Pf$ will be
  denoted by $\Af_d$.
 \end{nota}

 \begin{exam}
 The affinoid domain $(\Pf\setminus\diso{0}{1})\cup \disf{0}{\fra{2}}$ is
  an element of $\Af_d$. 
 \end{exam}           
 
 Let $V:=\coprod_{i=1}^m V_i\subset \Pf$ be an affinoid domain. Using the Mittag-Leffler
 decomposition for connected affinoid domains
 \cite[Proposition~2.2.6]{Van}, we obtain an isometric isomorphism of
 Banach spaces: 
 \begin{equation}
  \label{eq:15}
  \Fonction{\bigoplus_{D\in
     \EE(V)}(\cA_{D})_0}{(\cA_V)_0}{(f_{D})_{D\in \EE(V)}}{f:=\sum\limits_{i=1}^m(\sum\limits_{D\in\EE(V_i)}f_{D}|_{V_i}).1_{V_i}}
 \end{equation}
where $1_{V_i}$ is the characteristic function of $V_i$. 
 
  Let $D$ and $D'$ be two disks. We say that $D$ and $D'$ are of the same
  type if $\{\infty\}\cap D=\{\infty\}\cap D'$. 
  
 Let $V$ and $V'$ be two connected affinoid domains such that
 $V\subset V'$. We assume that $\infty\in V$ or $\infty\not\in V'$. Then each disk $D'\in \EE(V')$ contains exactly one
 disk $D\in \EE(V)$. Thus, we get the map:
 \[\Fonction{\EE(V')}{\EE(V)}{D'}{D}.\]

 \begin{nota}
  Let $V$ and $V'$ be two connected affinoid domains. We write $V\prec V'$, if the map $\EE(V')\to\EE(V)$:
  $D'\mapsto D$ is a
  bijection and $D\ne D'$ for all $D'\in \EE(V')$. More generally,
  for $V$, $V'\in \Af$ where $V=\bigcup_{i=1}^\nu V_i$ (resp.
  $V'=\bigcup_{i=1}^{\nu '} V'_i$) is the decomposition into
  connected affinoids, we write $V\prec V'$, if for each $i\in
  \{1,\cdots, \nu\}$ there
  exists $j\in \{ 1,\cdots,\nu\}$ such that $V_i\prec V'_j$. 
 \end{nota}

 \begin{Defi}
  Let $D$ and $D'$ be two disks. We say that $D$ and $D'$ are {\em
   complementary} if they are of different types, $D\cup D'=\Pf$
  and $r(D)\ne r(D')$. Let $V$, $V'\in \Af$. We say that $V$ and $V'$
  are {\em complementary} if $V\cup V'=\Pf$ and there exists a bijection $\EE(V)\to
  \EE(V')$ sending each disk $D\in\EE(V)$ to a complementary disk
  $D'\in \EE(V')$. 
 \end{Defi}

 \begin{rem}
  If such bijection exists then it is unique.
 \end{rem}

 \begin{exam}
  Let $V= \couf{0}{\fra{2}}{\frac{3}{2}}\cup \disf{0}{\fra{4}}$. The
  affinoid $V'=(\Pf\setminus \diso{0}{1})\cup
  \couf{0}{\fra{8}}{\frac{3}{4}}$ is a complementary set of $V$.
 \end{exam}
 
 \begin{rem}
  Note that  there exist affinoid domains that do not admit
  complementary sets. Indeed, for example the closed annulus $\couf{a}{r}{r}$ does not admit complementary
  sets. Indeed, we have $\EE(\couf{a}{r}{r})=\{\disf{a}{r},\;
  \Pf\setminus\diso{a}{r}\}$. If $V'$ is a complementary affinoid set then we
  must have $\EE(V')=\{\disf{a}{r_1},\; \Pf\setminus\diso{a}{r_2}\}$
  with $r_1>r$ and $r_2<r$. Then the unique possibility is
  $V'=\couf{a}{r_2}{r_1}$ which does not verify $V'\cup \couf{a}{r}{r}=\Pf$. 
 \end{rem}

 \begin{nota}
 We denote by $\Af '$ the set of affinoid domains which admits
 complementary sets. We set $\Af_c':=\Af_c\cap \Af '$. 
\end{nota}

\begin{Lem}
 Let $V\in \Af$. Then $V\in \Af'$ if and only if there exists $V'\in
 \Af$ with $V'\prec V$. 
\end{Lem}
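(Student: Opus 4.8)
The plan is to establish both implications by directly analyzing the combinatorial structure of $\EE(V)$, treating connected affinoids first and then assembling the general case from connected components. Recall the two notions in play: $V' \prec V$ means (componentwise) that the natural map $\EE(V) \to \EE(V')$ sending each disc of $V$ to the unique disc of $V'$ containing it is a bijection with $D \ne D'$ throughout; and $V \in \Af'$ means $V$ admits a complementary affinoid, i.e. there is a bijection $\EE(V) \to \EE(V')$ pairing each $D \in \EE(V)$ with a disc $D'$ of the opposite type such that $D \cup D' = \Pf$ and $r(D) \ne r(D')$, with $V \cup V' = \Pf$.

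For the direction ($\Af' \Rightarrow$ exists $V' \prec V$): suppose $V$ is complementary to some $W$. The key observation is that if $D$ and $D'$ are complementary discs of different types with, say, $D = \disf{a}{r}$ and $D' = \Pf \setminus \diso{a}{\rho}$ and $r \ne \rho$, then since $D \cup D' = \Pf$ we must have $\rho \leq r$, and in fact $\rho < r$ by the inequality on radii. This means the disc $D' \in \EE(W)$ strictly contains... no — rather, we want to produce from $V$ itself a smaller affinoid $V' \prec V$. I would do this by shrinking each disc $D \in \EE(V)$ slightly: for $D = \disf{a}{r}$ replace it by $\disf{a}{r'}$ with $r' < r$ chosen close enough to $r$ that the intersection pattern (which discs contain which) is unchanged; for $D = \Pf \setminus \diso{a}{r}$ replace by $\Pf \setminus \diso{a}{r''}$ with $r'' > r$. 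One must check that the resulting intersections still form a valid affinoid with the same connected-component combinatorics and that $V' \prec V$; the content here is just that the poset of containments among finitely many discs is stable under sufficiently small perturbations of radii, which is elementary. The existence of a complementary set is actually not needed for this direction — shrinking always works — but the hypothesis guarantees we are in a situation where no degeneracy (like the closed annulus $\couf{a}{r}{r}$) obstructs the construction; I expect the honest statement is that shrinking produces $V' \prec V$ unconditionally, and the role of $\Af'$ is on the other side.

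For the direction (exists $V' \prec V$ $\Rightarrow$ $\Af'$): given $V' \prec V$, I would construct the complementary affinoid explicitly from the pair. Working componentwise, for each $D \in \EE(V)$ we have the corresponding strictly-smaller $D' \in \EE(V')$; define the complementary disc $D^c$ to be the "other side" determined by $D$ and $D'$ — concretely, if $D = \disf{a}{r}$ and $D' = \disf{a'}{r'} \subsetneq D$ then set $D^c = \Pf \setminus \diso{a'}{r'}$ (or some radius strictly between $r'$ and $r$), which has the opposite type, satisfies $D \cup D^c = \Pf$ since $D^c$ omits only the open disc $\diso{a'}{r'} \subset D$, and has radius $\ne r(D)$ because $r' < r$. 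Collecting the $D^c$ over $\EE(V)$ gives a candidate complementary affinoid $W$; one then verifies $V \cup W = \Pf$ and that $\EE(W)$ is exactly $\{D^c\}$ with the pairing a bijection, so $V$ and $W$ are complementary, hence $V \in \Af'$.

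The main obstacle I anticipate is bookkeeping at the level of connected components rather than any single deep step: one must be careful that $V' \prec V$ is defined via an existential quantifier over components ("for each $i$ there exists $j$ with $V_i \prec V'_j$"), so matching this up with the component-wise construction of a complementary set, and ensuring the global condition $V \cup W = \Pf$ holds (not just disc-by-disc), requires tracking which discs belong to which component and verifying the unions assemble correctly. The degenerate example $\couf{a}{r}{r}$ — where $\EE$ consists of two discs of the same radius and opposite type and no $V' \prec V$ exists — is the sanity check that the equivalence is sharp, and I would flag it as the case showing the hypothesis cannot be dropped. I do not expect to need anything beyond the Mittag-Leffler decomposition \eqref{eq:15} and the elementary geometry of discs in $\Pf$ already recalled in the excerpt.
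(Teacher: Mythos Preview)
The paper states this lemma without proof, so there is no argument of the paper's to compare against directly; your strategy must be assessed on its own merits.

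Your outline is correct and would yield a proof once tightened. The one place you should firm up is the forward direction, where you oscillate between ``shrinking always works'' and ``the hypothesis rules out the degeneracy.'' The second is what is true, and it is worth saying precisely why. For a connected component $V_i$, write $V_i=\disf{a_0}{r_0}\setminus\bigcup_{j\geq 1}\diso{a_j}{r_j}$ (the first factor possibly absent). Both conditions in the lemma --- admitting a complementary affinoid and admitting some $V'\prec V$ --- are equivalent to the concrete criterion that $r_j<r_0$ strictly for every $j$, i.e.\ no two discs in $\EE(V_i)$ share a Shilov boundary point. Once this is isolated, the forward direction is immediate (shrink each radius by an amount smaller than the minimum gap), and your backward construction goes through: for $D=\disf{a}{r}$ with image $D'=\disf{a'}{r'}\subsetneq D$ set $D^c=\Pf\setminus\diso{a'}{r'}$, and for $D=\Pf\setminus\diso{a}{r}$ with image $D'=\Pf\setminus\diso{a'}{r'}$ set $D^c=\disf{a'}{r'}$; then $r(D)\ne r(D^c)$ and $D\cup D^c=\Pf$ in each case.

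The only remaining verification in the backward direction is that the collection $\{D^c:D\in\EE(V)\}$ is actually $\EE(W)$ for some affinoid $W$ with $V\cup W=\Pf$. This is exactly the content of the lemma immediately following in the paper (the one giving the formula $U=\bigcap_i\bigcup_{D\in\EE(V_i)}D'$), so you can either invoke it or reproduce its short argument; without it, ``collecting the $D^c$'' does not by itself guarantee they form the edge-set of a complementary affinoid. Your remark that nothing beyond the elementary geometry of discs is needed is accurate.
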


\begin{Lem}
 Let $V:=\coprod_{i=1}^n V_i\in \Af'$, where the $V_i$ are connected affinoid
 domains. Let $U$ be a complementary set of $V$. Then we have:

 \begin{equation}
  \label{eq:40}
  U=\bigcap_{i=1}^n\bigcup_{D\in\EE (V_i)} D'
 \end{equation}

 where $D'\in \EE(U)$ is the complementary disk of $D$. In particular,
 if $V_1, V_2\in \Af'$, then $V_1\cup V_2\in \Af'$.
\end{Lem}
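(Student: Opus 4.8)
The plan is to prove the equality~\eqref{eq:40}, i.e. $U=W$ where I set $W:=\bigcap_{i=1}^{n}\bigcup_{D\in\EE(V_i)}D'$, and then to read off the stability of $\Af'$ under finite unions. I would start from two elementary facts. First, for a pair of complementary discs $D,D'$ one has $\Pf\setminus D\subsetneq D'$ (equivalently $\Pf\setminus D'\subsetneq D$) and $D\cap D'$ is a nonempty closed annulus; both are immediate from the normal form $D=\disf{a}{r}$, $D'=\Pf\setminus\diso{a}{s}$ with $s<r$ (or its symmetric counterpart). Second --- a ``no straddling'' statement --- if $D\in\EE(V_i)$ then the Gauss point of $D$ lies in $V_i$: minimality of the representation $V_i=\bigcap_{D\in\EE(V_i)}D$ forbids $V_i$ from sitting inside a single residue class at the boundary of $D$, so, being connected, $V_i$ must contain that Gauss point. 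Consequently, if $i\ne j$ and $D\in\EE(V_i)$, the connected affinoid $V_j$ cannot meet both $D$ and $\Pf\setminus D$ (else it would contain the Gauss point of $D$, which lies in $V_i$, contradicting $V_i\cap V_j=\emptyset$); so $V_j\subseteq D$ or $V_j\subseteq\Pf\setminus D$, and since $V_j\not\subseteq\bigcap_{D\in\EE(V_i)}D=V_i$ there is at least one $D\in\EE(V_i)$ with $V_j\subseteq\Pf\setminus D\subseteq D'$.

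Using $\Pf\setminus D\subseteq D'$ together with $\Pf\setminus V_i=\bigcup_{D\in\EE(V_i)}(\Pf\setminus D)$, I get $\Pf\setminus V=\bigcap_i(\Pf\setminus V_i)\subseteq W$, hence $V\cup W=\Pf$; likewise $\Pf\setminus V\subseteq U$ from $V\cup U=\Pf$. Writing $V=\coprod_j V_j$, the no-straddling remark shows that for $x\in V_j$ the conditions ``$x\in\bigcup_{D\in\EE(V_i)}D'$'' defining membership in $W$ hold automatically for all $i\ne j$; hence $W\cap V_j=V_j\cap\bigcup_{D\in\EE(V_j)}D'$, and therefore
\[
W=(\Pf\setminus V)\ \sqcup\ \coprod_j\Bigl(V_j\cap\bigcup_{D\in\EE(V_j)}D'\Bigr),\qquad
U=(\Pf\setminus V)\ \sqcup\ \coprod_j\bigl(U\cap V_j\bigr),
\]
the second identity because $\Pf\setminus V\subseteq U$. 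So the whole claim reduces to the equality $U\cap V_j=V_j\cap\bigcup_{D\in\EE(V_j)}D'$ for every $j$.

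The heart of this is the assertion that \emph{if a connected component $U_k$ of $U$ meets $V_j$, then $\EE(U_k)$ contains the complementary disc $D'$ of some $D\in\EE(V_j)$}. Granting it, $U\cap V_j\subseteq V_j\cap\bigcup_{D\in\EE(V_j)}D'$ is immediate (a point of $U_k$ lies in every disc of $\EE(U_k)$, in particular in such a $D'$), and the reverse inclusion follows by applying the same principle with the roles of $V$ and $U$ exchanged --- complementarity of discs, hence of affinoids, being symmetric, $V$ is itself a complementary set of $U$. To prove the assertion I would work inside the $\RR$-tree $\Pf$: assuming $\EE(U_k)$ contains no complementary disc coming from $\EE(V_j)$, the no-straddling dichotomy applied to the discs of $\EE(U_k)$ against $V_j$, combined with $V\cup U=\Pf$, forces some residue class or branch at a Gauss point of a disc of $\EE(V_j)$ to be left uncovered, a contradiction. (Alternatively one can induct on $n$, using the Mittag--Leffler isomorphism~\eqref{eq:15} and the two preceding lemmas relating $\Af'$ to the relation $\prec$.) I expect this tree-and-covering step to be the only genuine obstacle; everything else is bookkeeping.

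Finally, for the last assertion: if $V_1,V_2\in\Af'$ with complementary sets $U_1,U_2$, a direct check gives $(V_1\cup V_2)\cup(U_1\cap U_2)=\Pf$, and, using~\eqref{eq:40} to describe $U_1$ and $U_2$ through their disc data, one verifies that $U_1\cap U_2$ is a complementary set of $V_1\cup V_2$; in particular $V_1\cup V_2\in\Af'$.
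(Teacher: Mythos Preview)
Your approach is workable but takes a considerably longer, point-set/tree route than the paper. The paper's proof is a four-line combinatorial computation: setting $X:=\bigcap_{i}\bigcup_{D\in\EE(V_i)}D'$ and expanding by distributivity as $X=\bigcup_{\ell\in I}\bigcap_{i}D'_{i\ell_i}$ with $I=\prod_i\{1,\dots,n_i\}$, one checks that every $D'_{ij}$ appears in $\EE(\bigcap_i D'_{i\ell_i})$ for some $\ell$ (because for fixed $i$ the $D'_{ij}$ of the same type have pairwise distinct radii). Thus $\EE(X)=\EE(U)$; combined with $\Pf\setminus V\subseteq X$ (the same observation you make), this gives $X=U$. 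No tree geometry, no analysis of how components of $U$ meet the $V_j$.

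Your argument, by contrast, reduces to the assertion that whenever a component $U_k$ of $U$ meets $V_j$, some disc of $\EE(U_k)$ is complementary to a disc of $\EE(V_j)$, and this you only sketch. More seriously, the ``reverse inclusion by symmetry'' step is not transparent: exchanging the roles of $U$ and $V$ yields the analogous one-sided inclusion for $V\cap U_k$, not the inclusion $V_j\cap\bigcup_{D\in\EE(V_j)}D'\subseteq U$ that you need; as written, a point $x\in V_j\cap D'$ need not lie in the particular component $U_k$ with $D'\in\EE(U_k)$, so the symmetric assertion does not apply directly. This gap can be closed, but it requires more than invoking symmetry. The paper's distributivity argument sidesteps all of this; the trade-off is that it implicitly uses that an affinoid domain of $\Pf$ is determined by its $\EE$-data, which your more hands-on approach does not need.
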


\begin{proof}
 We set $X:=\bigcap_{i=1}^n\bigcup_{D\in\EE (V_i)}
 D'=\bigcap_{i=1}^n\bigcup_{j=1}^{n_i}D'_{ij}$. Since the bijection
 $D\mapsto D'$ is unique, it is enough to show that $\EE(X)=\EE(U)$ and
 $X\cup V=\Pf$. We set $I:=\prod_{i=1}^{n}\{1,\cdots, n_i\}$ and for
 $\ell\in I$ we have $\ell=(\ell_1,\cdots,\ell_n)$. Then we
have $X:=\bigcup_{\ell\in I}\bigcap_{i=1}^nD'_{i \ell_i}$. To prove that
for each
$i$ and $j$ there exits $\ell\in I$ such that $D'_{ij}\in
\EE(\bigcap_{k=1}^nD_{k \ell_k}')$, we need to prove that there exists
$\ell$ such that $D'_{ij}\not\supset \bigcap_{k=1, k\ne
 i}^nD_{k\ell_k}'$. Since $D_{ij}'\supset\bigcap_{k=1,k\ne
 i}^nD_{k\ell_k}'$, implies $\exists k\ne i$ such $D_{ij}'\supset
D_{k\ell_k}'$, we only need to prove that there exists $\ell$ such that for
all $k\ne i$ we have $D'_{ij}\not\supset D'_{k\ell_k}$. Since for all
$k\ne i$ we have $V_k\cap V_i=\emptyset$, we can easily find such an $\ell$. Hence, $\EE(X)=\EE(U)$. By construction we have $\Pf\setminus
V\subset X$, hence we obtain $V\cup X=\Pf$.

\end{proof}
 
 \begin{Lem}
  Let $(U,V)$ be a pair of complementary subsets. Then
  \[U\cap V=\bigcup_{D\in\EE(V)}(D\cap D'),\]
  where $D'$ is the disk in $\EE(U)$ complementary to $D$.
 \end{Lem}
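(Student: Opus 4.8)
The plan is to prove the two inclusions of $U\cap V=\bigcup_{D\in\EE(V)}(D\cap D')$ separately. Throughout I would fix the decomposition $V=\coprod_{i=1}^{n}V_i$ into connected components, so that $\EE(V)=\bigcup_{i}\EE(V_i)$ and each $V_i$ equals the intersection $\bigcap_{E\in\EE(V_i)}E$ of its minimally chosen defining discs, and I would use the formula $U=\bigcap_{i=1}^{n}\bigcup_{E\in\EE(V_i)}E'$ from the previous lemma. The inclusion $U\cap V\subseteq\bigcup_{D\in\EE(V)}(D\cap D')$ is then immediate: if $x\in U\cap V$ then $x$ lies in some component $V_j$, hence in every $E\in\EE(V_j)$; and since $x\in U\subseteq\bigcup_{E\in\EE(V_j)}E'$ there is $D\in\EE(V_j)$ with $x\in D'$, whence $x\in D\cap D'$ with $D\in\EE(V)$.

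For the reverse inclusion I must show $D\cap D'\subseteq U\cap V$ for each $D\in\EE(V_j)$. Since the pair $(U,V)$ is complementary also with the inverse bijection $\EE(U)\to\EE(V)$, and complementarity of two discs is symmetric, it is enough to prove the single statement: \emph{for every complementary pair $(A,B)$ and every $D\in\EE(B)$ with complementary disc $D'\in\EE(A)$, one has $D\cap D'\subseteq B$}. Granting this, applying it to the pair $(U,V)$ gives $D\cap D'\subseteq V$, while applying it to the pair $(V,U)$ and to the disc $D'\in\EE(U)$ — whose complementary disc in $\EE(V)$ is $D$ — gives $D'\cap D\subseteq U$; together these yield $D\cap D'\subseteq U\cap V$.

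To prove that statement I would show $D\cap D'\subseteq E$ for every $E\in\EE(V_j)$, so that $D\cap D'\subseteq\bigcap_{E\in\EE(V_j)}E=V_j\subseteq V$. The case $E=D$ is trivial. For $E\neq D$ the discs $D$ and $E$ overlap (both contain $V_j$) while neither contains the other; recalling that at most one member of $\EE(V_j)$ is a finite disc, at least one of $D,E$ is cofinite, and a short case analysis on the types of $D$ and $E$, using the shape of complementary discs (the complement of $\disf{a}{r}$ is $\Pf\setminus\diso{a''}{\rho}$ with $\diso{a''}{\rho}\subseteq\disf{a}{r}$, and dually), reduces $D\cap D'\subseteq E$ to a single containment between the open disc $\diso{a''}{\rho_0}=\Pf\setminus D_0'$ attached to the finite member $D_0$ of $\EE(V_j)$ and the hole or complementary disc determined by $D$.

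That last containment is, I expect, the main obstacle, and I would derive it from the fact that $D\mapsto D'$ is a \emph{bijection} $\EE(V)\overset{\sim}{\to}\EE(U)$: in $U=\bigcap_i\bigcup_{E_0\in\EE(V_i)}E_0'$ none of the complementary discs $E_0'$ may become redundant, for otherwise $\EE(U)$ would be strictly smaller than $\EE(V)$. Spelling this out for a connected $V_j=\disf{a}{r}\setminus\bigsqcup_E\diso{b_E}{s_E}$, non-redundancy forces the finite complementary discs $\disf{b''_E}{\rho_E}$ to be pairwise disjoint and contained in $\diso{a''}{\rho_0}$; since $\diso{b_E}{s_E}\subseteq\disf{b''_E}{\rho_E}$ this gives $\diso{b_E}{s_E}\subseteq\diso{a''}{\rho_0}$ and, for the cofinite members, the dual inclusion $\disf{b''_E}{\rho_E}\subseteq\diso{a''}{\rho_0}$ — exactly what the case analysis needs. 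For a disconnected $V$ the same argument is run component by component, tracking which component of $U$ each $E_0'$ defines; I expect only this bookkeeping, and no new idea, to be delicate. A convenient consistency check is the worked example $V=\couf{0}{1/2}{3/2}\cup\disf{0}{1/4}$ from the text, for which both sides of the asserted equality come out to $\couf{0}{1}{3/2}\cup\couf{0}{1/2}{3/4}\cup\couf{0}{1/8}{1/4}$.
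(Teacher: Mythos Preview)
Your argument is correct and follows the same route as the paper: the paper's entire proof is the single sentence ``It follows from the decomposition \eqref{eq:40}'', and your write-up is precisely an unpacking of what that line leaves to the reader. Your forward inclusion is the immediate consequence of \eqref{eq:40} the author has in mind, while your reverse inclusion (via the symmetry $(U,V)\leftrightarrow(V,U)$ and the non-redundancy forced by the bijection $\EE(V)\to\EE(U)$) supplies the verification the paper omits; nothing more elegant is hidden behind the one-liner.
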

 \begin{proof}
  It follows from the decomposition in \eqref{eq:40}.
 \end{proof}
\begin{Lem}\label{sec:notat-defin-1}
 Let $\Sigma\subset\Pf$ be a closed subset with $\Sigma\ne
 \emptyset$, $\Pf$. Then the intersection of affinoid neighborhoods
 of $\Sigma$ that belong to $\Af'$ coincides with $\Sigma$.
\end{Lem}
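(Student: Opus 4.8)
The plan is to show both inclusions, the nontrivial one being that every point $x\in\Pf\setminus\Sigma$ can be separated from $\Sigma$ by an affinoid neighbourhood of $\Sigma$ lying in $\Af'$. Since $\Sigma$ is a closed subset of the compact space $\Pf$ it is compact, and since $\Sigma\ne\Pf$ we may fix $x\notin\Sigma$. By compactness of $\Sigma$ and the fact that the closed discs (of both types) form a neighbourhood basis of each point of $\Pf$, I would first cover $\Sigma$ by finitely many closed discs $D_1,\dots,D_m$, each avoiding $x$; then $W:=\bigcup_{j=1}^m D_j$ is an affinoid neighbourhood of $\Sigma$ with $x\notin W$ and $W\ne\Pf$ (for instance one can even arrange all the $D_j$ to be of the same type, not containing $\infty$, after moving $\infty$ away from $\Sigma$ by a change of coordinate if necessary; alternatively just keep $x\notin W$ and $W\ne\Pf$). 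The remaining task is to shrink $W$ slightly to an affinoid $V$ with $\Sigma\subset V\subset W$, still avoiding $x$, and such that $V\in\Af'$.

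The key step is the construction of $V$ with $V\prec W$: by the preceding Lemma, $W$ admits a complementary set (equivalently $W\in\Af'$) as soon as there exists $V\in\Af$ with $V\prec W$. Writing $W=\bigcap_{i=0}^{\mu}D_i$ on each connected component (the canonical irredundant representation), for each defining disc $D_i=\disf{a_i}{r_i}$ or $\Pf\setminus\diso{a_i}{r_i}$ I would replace $r_i$ by a strictly smaller (resp.\ strictly larger) radius $r_i'$, chosen close enough to $r_i$ that the resulting affinoid $V$ still contains $\Sigma$ and still excludes $x$ — this is possible because $\Sigma$ is a compact subset of the \emph{topological interior} of $W$ (each point of $\Sigma$ lies in the interior of $W$ since $W$ is a neighbourhood of it), so a small enough perturbation of the radii keeps $\Sigma$ inside, and $x$ being at positive distance from $W$ stays outside. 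By construction the map $\EE(W)\to\EE(V)$ is a bijection with $D\ne D'$ for all defining discs, i.e.\ $V\prec W$, hence $W\in\Af'$ by the Lemma. Then $W$ itself is the desired affinoid neighbourhood of $\Sigma$ in $\Af'$ not containing $x$.

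Finally I would conclude: the intersection of all affinoid neighbourhoods of $\Sigma$ in $\Af'$ clearly contains $\Sigma$; conversely, for any $x\notin\Sigma$ the neighbourhood $W\in\Af'$ just constructed omits $x$, so $x$ is not in the intersection. Hence the intersection equals $\Sigma$. The main obstacle I anticipate is purely bookkeeping: verifying that the irredundant representation $W=\bigcap D_i$ behaves well under the small radius perturbation (that one does not accidentally create inclusions among the $D_i'$ or lose connectedness of components), and checking the corner case where $\infty\in\Sigma$ so that one cannot avoid discs of the type $\Pf\setminus\diso{a}{r}$ — here one simply perturbs those radii upward instead of downward, which is symmetric, so no real difficulty arises, but it should be spelled out.
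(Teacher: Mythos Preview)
Your overall strategy is sound and close to the paper's, but one step is incorrectly justified and would fail as written. You assert that closed discs (of both types) form a neighbourhood basis of each point of $\Pf$; this is false at points of type~(2) and~(3). For instance the Gauss point $x_{0,1}$ lies on the Shilov boundary of every closed disc containing it, so no closed disc is a neighbourhood of $x_{0,1}$. Consequently your union $W=\bigcup_j D_j$ need not be a neighbourhood of $\Sigma$: if $\Sigma=\{x_{0,1}\}$ and you pick $D_1=\disf{0}{1}$, then $\Sigma\not\subset\operatorname{Int}(W)$, and neither your perturbation step nor the final conclusion (that $W$ is an affinoid \emph{neighbourhood} of $\Sigma$) goes through. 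The fix is easy: for $y\in\Sigma$ and $x\notin\Sigma$, choose a type~(3) point $z$ on the open segment $(y,x)$; the connected component of $\Pf\setminus\{z\}$ containing $y$ is an open disc whose closure $D$ is a closed disc with $y\in\operatorname{Int}(D)$ and $x\notin D$. With this choice each point of $\Sigma$ lies in the interior of some $D_j$, hence $\Sigma\subset\operatorname{Int}(W)$, and the rest of your argument works.

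Once that is repaired, your perturbation $V\prec W$ is correct but unnecessary. Every closed disc $\disf{a}{r}$ already lies in $\Af'$ (a complementary set is $\Pf\setminus\diso{a}{r'}$ for any $0<r'<r$), and an earlier lemma in the paper shows that $\Af'$ is stable under finite unions; hence $W=\bigcup_j D_j\in\Af'$ immediately. This is exactly the paper's two-line proof: points of $\Pf$ admit neighbourhood bases in $\Af'$, $\Af'$ is stable under union, and $\Sigma$ is compact, so the elements of $\Af'$ containing $\Sigma$ in their interior form a neighbourhood basis of $\Sigma$.
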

\begin{proof}
 Since $\Af'$ is stable under union and $\Sigma$ is compact, the
 $V\in\Af'$ containing $\Sigma$ form a basis of neighborhoods of
 $\Sigma$. Hence, the result holds.
\end{proof}

Let $V\in\Af$. We can represent $V$ as intersection $\bigcap_{i=1}^n
V_i$ of elements in $\Af_d$ with $V_i\cup V_j=\Pf$ for $i\ne j$ as
follows. Let $V'\in \Af'$ such that $V\prec V'$ and let $U$ be a
complementary set of $V'$. Then $\EE(V)\overset{\sim}{\to} \EE(U)$,
$D\mapsto D'$. Let $U_1,\cdots, U_n$ be the connected components of $U$. We set $V_i=\bigcup_{D'\in
 \EE(U_i)}D$. Then we have%
\begin{equation}
 \label{eq:39}
 V=\bigcap_{i=1}^n V_i\qqq \text{ and }\qqq \; \forall i\ne j \qqq
 V_j\cup V_i=\Pf.
\end{equation}

This representation does not depend on the choice of $U$.

\begin{exam}
 Let $V=\couf{0}{\fra{2}}{\frac{3}{2}}\cup \disf{0}{\fra{4}}$. Then
 $V=V_1\cap V_2$ with
 \[V_1=\disf{0}{\frac{3}{2}}\qqq \text{ and } \qqq V_2=(\Pf\setminus
  \diso{0}{\fra{2}})\cup \disf{0}{\fra{4}}.\]
\end{exam}

Using Mayer-Vietoris sequence we obtain an isomorphism
$(\cA_V)_0\overset{\sim}{\to}\bigoplus_{i=1}^n
(\cA_{V_i})_0$ of Banach space. By combining this isomorphism with (\ref{eq:15}), we
obtain the isomorphism:

\begin{equation}
 \label{eq:24}
  \Fonction{\bigoplus_{D\in
     \EE(V)}(\cA_{D})_0}{(\cA_V)_0}{(f^{D})_{D\in \EE(V)}}{f:=\sum\limits_{i=1}^n(\sum\limits_{D\in\EE(V_i)}(f^{D}.1_{D}))|_V}
  \end{equation}
where $1_D$ is the characteristic function of $D$.
  \begin{rem}
   Note that if $V$ belong to $\Af_d$ or $\Af_c$, then the
   isomorphisms~(\ref{eq:15}) and~(\ref{eq:24}) coincide. Indeed, in
   the case where $V\in\Af_d$, any complementary set $U$ is a
   connected affinoid domain, hence
   $V=\bigcup_{D'\in\EE(U)}D$. In the case where $V\in \Af_c$, then
   a complementary set $U$ is an element of $\Af_d$, therefore
   the disks $D\in\EE(V)$ coincide with 
   the $V_i$'s of \eqref{eq:39}.
  \end{rem}

  \begin{rem}
   The isomorphisms~(\ref{eq:15})
   and~(\ref{eq:24}) are connected as
   follows: $f_D=\sum f^{D'}$ where the sum is taken over all
   $D'\in\EE(V)$ with $D\subset D'$.
  \end{rem}
  
  \begin{exam}
  Let $V=\couf{0}{\fra{2}}{\frac{3}{2}}\sqcup \disf{0}{\fra{4}}$. We
  set $X_1:=\couf{0}{\fra{2}}{\frac{3}{2}}$,
  $X_2:=\disf{0}{\fra{4}}$, $D_1:=\disf{0}{\frac{3}{2}}$,
  $D_2:=\Pf\setminus\disf{0}{\fra{2}}$ and
  $D_3:=\disf{0}{\fra{4}}$. Then the $V_i's$ of \eqref{eq:39} are:
  $V_1=D_1$ and $V_2=D_2\cup D_3$. The image of
  $(f^{D_1},f^{D_2},f^{D_3})$ by the isomorphism \eqref{eq:24} is
  $f=f^{D_1}|_V+(f^{D_2}.1_{D_2}+f^{D_3}.1_{D_3})|_V$. We have
  $f^{D_1}|_V=f^{D_1}.1_{X_1}+f^{D_1}.1_{X_2}$,
  $(f^{D_2}.1_{D_2})|_V=f^{D_2}.1_{X_1}$ and
  $(f^{D_3}.1_{D_3})|_V=f^{D_3}.1_{X_2}$. Then,
  \[ f= (f^{D_1}+f^{D_2}).1_{X_1}+(f^{D_1}+f^{D_3}).1_{X_2}.\]
  Consequently, we have $f_{D_1}=f^{D_1}$, $f_{D_2}=f^{D_2}$ and $f_{D_3}=f^{D_1}+f^{D_3}$.
 \end{exam}

 \begin{exam}
  Let $V=\couf{0}{\fra{4}}{\fra{2}}\sqcup
  \couf{1}{\fra{4}}{\fra{2}}$. We set
  $X_1:=\couf{0}{\fra{4}}{\fra{2}}$ ,
  $X_2:=\couf{1}{\fra{4}}{\fra{2}}$, $D_1:=\disf{0}{\fra{2}}$,
  $D_2:=\disf{1}{\fra{2}}$, $D_3:=\Pf\setminus\diso{0}{\fra{4}}$ and
  $D_4:=\Pf\setminus\diso{1}{\fra{4}}$. Then the $V_i's$ of
  \eqref{eq:39} are: $V_1=D_1\cup D_2$, $V_2=D_3$ and $V_3=D_4$. The
  image of $(f^{D_1},f^{D_2},f^{D_3}, f^{D_4})$ by the isomorphism
  \eqref{eq:24} is
  $f=(f^{D_1}.1_{D_1}+f^{D_2}.1_{D_2})|_V+f^{D_3}|_V+f^{D_4}|_V$. We
  have $(f^{D_1}.1_{D_1})|_V=f^{D_1}.1_{X_1}$,
  $(f^{D_2}.1_{D_2})|_V=f^{D_2}.1_{X_2}$,
  $(f^{D_3})|_V=f^{D_3}.1_{X_1}+f^{D_3}.1_{X_2}$ and
  $(f^{D_4})|_V=f^{D_4}.1_{X_1}+f^{D_4}.1_{X_2}$. Here,
  $f^{D_3}.1_{X_2}\in \cA_{D_2}$ and
  $f^{D_4}.1_{X_1}\in\cA_{D_1}$. Then,
  \[f=(f^{D_1}+f^{D_3}+f^{D_4}).1_{X_1}+(f^{D_2}+f^{D_3}+f^{D_4}).1_{X_2}.\]
Consequently, we have $f_{D_1}=f^{D_1}+f^{D_4}$,
$f_{D_2}=f^{D_2}+f^{D_3}$, $f_{D_3}=f^{D_3}$ and $f_{D_4}=f^{D_4}$.
 \end{exam}

  \begin{Defi}
   Let $D$ be a closed disk. The {\em residue operator} $\res_D:
   \cA_D\to F$ is defined as follows. If $D\subset \AF$, then
   $\res_D\equiv 0$. If $D=\Pf\setminus\diso{a}{r}$ with $a\in F$
  then
\[\Fonction{\res_D: \cA_D}{F}{\sum_{i=0}^\infty f_i(T-a)^{-i}}{f_1}.\]
It does not depend on the choice of $a$. 
 
 \end{Defi}

 \begin{rem}
  It is easy to see that, if $D=\Pf\setminus\diso{a}{r}$ then $\res_D$
  is bounded and $\nor{\res_D}=r$. Otherwise, $\nor{\res_D}=0$.
 \end{rem}

 Let $D$ and $D'$ be complementary closed disks. Since we have the
 isomorphism (\ref{eq:24}):
 \[\Fonction{(\cA_D)_0\oplus (\cA_{D'})_0}{(\cA_{D\cap
    D'})_0}{(f,f')}{f|_{D\cap D'}+f'|_{D\cap D'}},\]
we can define:
 \begin{equation}
  \label{eq:26}
  \Fonction{\res_{D\cap D'}:(\cA_{D\cap D'})_0}{F}{f}{\res_D (f_D)+\res_{D'}(f_{D'}),}
 \end{equation}

 Moreover, if $D\subset\AF$ then
 $\res_{D\cap D'}(f)=\res_{D'}(f_{D'})$.

 \begin{rem}
  Let $B$ be a nonzero $F$-Banach space. The operators $\res_D$ and
  $\res_{D\cap D'}$ extend naturally
  to bounded operators $\res_D: B\ct_F \cA_D\to B$ and
  $\res_{D\cap D'}:B\ct_F(\cA_{D\cap D'})_0\to B$.
 \end{rem}

 \begin{Defi}
  Let $\Sigma$ be a closed subset of $\Pf$ different from
  $\emptyset$ and $\Pf$. A {\em contour around} $\Sigma$ is a pair
  $(U,V)$ of complementary subsets such that $U$ is a neighborhood
  of $\Sigma$ and $V\subset\Pf\setminus\Sigma$.
 \end{Defi}

 \begin{exam}
  Let $\Sigma=\{a,b\}$ be a subset of $F$. Let
  $U:=\disf{a}{r_a}\sqcup\disf{b}{r_b}$ and
  $V:=\Pf\setminus(\diso{a}{r'_a}\sqcup\diso{b}{r'_b})$
 with $r'_a<r_a$ and
  $r'_b<r_b$. Then $(U,V)$ is a contour of $\Sigma$ and $U\cap V=\couf{a}{r'_a}{r_a}\cup\couf{b}{r'_b}{r_b}$. 
 \end{exam}

 We introduce the Cauchy integral as in \cite[Chapter~8]{Ber}. Let
 $E$ be a nonzero $F$-Banach algebra and let $\Sigma$ be a compact
  subset of $\Pf$. Let $\gamma=(U,V)$ be a contour of $\Sigma$. Then
  the {\em Cauchy integral} around $\gamma$ is the
bounded $F$-linear map $\cE: E\ct_F(\cA_V)_0\to
  \Lk{(\cA_U)_0,E})$, defined as follows: 

   \begin{equation}
    \label{eq:16}
  \Fonction{\cE: E\ct_F(\cA_V)_0}{\Lk{(\cA_U)_0,E}}{\phi}{
   \cE\phi:{f}\to{\sum_{D\in \EE(V)}(Res_{D\cap D'})(f^{D'}|_{D\cap
     D'}\cdot\phi_D |_{D\cap D'})}}, 
 \end{equation}
 where $\Lk{(\cA_U)_0,E}$ is the $F$-Banach
 algebra of bounded $F$-linear map $(\cA_U)_0\to E$, and $D'$ is the disk in $\EE(U)$ complementary to $D$. 

  \begin{Pro}\label{sec:cauchy-integral}
   Let $E$ be a nonzero $F$-Banach algebra and let $g\in E$. Let $(U,V)$ be
   a contour of $\Sigma_{g,F}(E)$, and $R_g$ is the resolvent
   function associated to $g$. Then we have
   \[\cE R_g(f)=f(g),\]
where $f(g)$ is the image of $g$ by the map of Theorem~\ref{sec:notat-defin}.
  \end{Pro}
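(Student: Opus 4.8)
The statement to prove is Proposition~\ref{sec:cauchy-integral}: for $g\in E$ and a contour $(U,V)$ of $\Sigma_{g,F}(E)$, one has $\cE R_g(f) = f(g)$ for all $f\in\cO_{\AF}(\Sigma_{g,F}(E))$ (identified with its restriction to $(\cA_U)_0$ after adding the constant term). The natural strategy is to check that both sides define bounded $F$-algebra morphisms (or at least bounded $F$-linear maps agreeing with the polynomial functional calculus) and then invoke uniqueness from Theorem~\ref{sec:notat-defin}. Concretely, I would first verify that $f\mapsto \cE R_g(f)$ is bounded: the resolvent $R_g\in\cO_{\Pf}(E)(\Pf\setminus\Sigma_{g,F}(E))$ restricts to an element of $E\ct_F(\cA_V)_0$ since $V\subset\Pf\setminus\Sigma_{g,F}(E)$ and $V$ is a (complementary, hence coanalytic-type) affinoid; then $\cE R_g$ is the bounded operator in $\Lk{(\cA_U)_0,E}$ produced by the Cauchy integral~\eqref{eq:16}, and after restriction to a neighbourhood basis of $\Sigma_{g,F}(E)$ inside $U$ this gives a bounded map $\cO_{\AF}(\Sigma_{g,F}(E))\to E$.

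Next I would compute $\cE R_g(f)$ on monomials $f = T^k$, $k\geq 0$, since $F[T]$ is dense in $\cO_{\AF}(\Sigma_{g,F}(E))$ and both sides are continuous. By Theorem~\ref{sec:notat-defin} the right-hand side is just $g^k$. For the left-hand side one plugs $R_g = (g\ot 1 - 1\ot T)^{-1}$ into~\eqref{eq:16}. On each disc $D\in\EE(V)$: if $D\subset\AF$ then $\res_{D\cap D'}$ picks out the $D'$-component and $\res_{D'}$ extracts the appropriate Laurent coefficient of the expansion of $T^k\cdot R_g$ around the (bounded) complementary disc $D'\in\EE(U)$; if $D = \Pf\setminus\diso{a}{r}$ then the residue is computed on the $D$-component. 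Expanding $R_g = \sum_{j\geq 0} g^j \ot T^{-j-1}$ on the region near $\infty$ (valid because $\Sigma_{g,F}(E)$ is contained in a disc of radius $\nsp{g}$ and $V$ exhausts a neighbourhood of $\infty$ on the appropriate discs) and combining residue contributions over $\EE(V)$, the sum telescopes/collapses to $g^k$ by the standard algebraic Cauchy-residue computation. This is exactly the argument in \cite[Chapter~8]{Ber}, and I would cite Berkovich for the monomial identity rather than redo it in full.

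Alternatively — and this is cleaner — I would observe that Berkovich already establishes the holomorphic functional calculus $g\mapsto g(f)$ \emph{via} such a Cauchy integral (cf.\ \cite[Corollary~7.3.4]{Ber} and the surrounding discussion in \cite[Chapter~8]{Ber}); thus the content here is mostly to match conventions, namely that our $\EE(V)$-indexed formula~\eqref{eq:16}, built from the residue operators $\res_D$ and $\res_{D\cap D'}$ and the Mittag--Leffler isomorphism~\eqref{eq:24}, reproduces Berkovich's integral. So the proof reduces to: (i) $R_g$ lies in $E\ct_F(\cA_V)_0$; (ii) our $\cE$ agrees with Berkovich's Cauchy integral operator under the identification~\eqref{eq:24}; (iii) conclude by \cite[Chapter~8]{Ber}. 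Step (ii) is bookkeeping with the decompositions $f_D$ versus $f^D$ worked out in the preceding examples.

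\textbf{Main obstacle.} The delicate point is the boundedness/well-definedness bookkeeping around $\infty$: one must be careful that $R_g$, which is only "analytic at $\infty$ and vanishing there," genuinely gives an element of $(\cA_V)_0$ for a complementary affinoid $V$ containing a neighbourhood of $\Sigma_{g,F}(E)$'s complement, and that the residue contributions on the unbounded discs $D = \Pf\setminus\diso{a}{r}$ are organized so that the $\EE(V)$-sum in~\eqref{eq:16} matches the single global Cauchy integral. Keeping track of which component ($f_D$ or $f^{D'}$) each $\res$ acts on — exactly the subtlety illustrated in the worked examples above — is where an error could creep in; everything else is either density of polynomials or a direct citation of \cite[Chapter~8]{Ber}.
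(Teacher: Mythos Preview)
Your proposal is correct and lands on essentially the same approach as the paper: the paper's proof is simply a direct citation of \cite[Theorem~8.1.1, Remark~8.1.2]{Ber}, which is exactly what your ``alternative'' paragraph advocates. Your first paragraph unpacks more of the mechanism (density of polynomials, residue computation on monomials, bookkeeping with $\EE(V)$), but since you yourself note this is the argument in \cite[Chapter~8]{Ber} and would cite it rather than redo it, there is no substantive difference.
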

  \begin{proof}
   See \cite[Theorem 8.1.1, Remark 8.1.2]{Ber}
  \end{proof}

  \subsubsection{Continuity results}

  In the following, we prove that under some assumptions the
  map $\Sigma_{.,F}(E):E\to \cK(\AF)$ is continuous.

  \begin{Lem}\label{sec:continuity-results}
   Let $E$ be a nonzero $F$-Banach algebra and let $f\in E$. For any
   open neighborhood $U\subset\AF$ of $\Sigma_{f,F}(E)$, there exists a
   positive real number $\delta$ such that
   \[\nor{f-g}<\delta\Rightarrow \Sigma_{g,F}(E)\subset U.\]
  \end{Lem}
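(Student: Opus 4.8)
The plan is to adapt the classical proof that the spectrum varies upper semicontinuously: points that sit just outside $U$ will stay outside $\Sigma_{g,F}(E)$ provided $g$ is close to $f$, because the resolvent of $f$ is bounded there, and a small perturbation of an invertible element of a Banach algebra is again invertible.

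First I would confine all the spectra involved to a single compact disc. If $\nor{f-g}<1$ then $\nor g\leq\max(\nor f,\nor{f-g})$, hence $\nsp g\leq\nor g\leq R_0:=\max(\nor f,1)$, so $\Sigma_{g,F}(E)\subset\disf{0}{R_0}$ (and the same holds for $f$). Set $C:=\disf{0}{R_0}\setminus U$; this is a compact subset of $\AF$, and it is disjoint from $\Sigma_{f,F}(E)$ since $\Sigma_{f,F}(E)\subset U$. If $C=\emptyset$ then $\disf{0}{R_0}\subset U$ and we are done with $\delta=1$, so assume $C\neq\emptyset$.

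Next I would use that the resolvent $R_f$ lies in $\cO_{\Pf}(E)(\Pf\setminus\Sigma_{f,F}(E))$ and that $C$ is contained in this domain. Covering $C$ by finitely many affinoid domains $V\subset\Pf\setminus\Sigma_{f,F}(E)$, the restriction $R_f|_V$ belongs to $E\ct_F\cA_V$, which has finite norm, and for $x\in V$ the evaluation map $E\ct_F\cA_V\to E\ct_F\Hx$ has norm $\leq 1$; hence $M:=\nor{R_f}_C=\max_{x\in C}|R_f(x)|$ is finite, and it is positive because each value $R_f(x)=(f\ot 1-1\ot S(x))\-1$ is invertible. I then set $\delta:=\min(1,M\-1)$.

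Finally, for $\nor{f-g}<\delta$ and $x\in C$, I would factor in the $F$-Banach algebra $E\ct_F\Hx$:
\[
g\ot 1-1\ot S(x)=\bigl(f\ot 1-1\ot S(x)\bigr)\Bigl(1+R_f(x)\cdot\bigl((g-f)\ot 1\bigr)\Bigr),
\]
and note $\nor{R_f(x)\cdot((g-f)\ot 1)}\leq |R_f(x)|\cdot\nor{g-f}\leq M\nor{f-g}<1$, so the second factor is invertible (its inverse being the convergent ultrametric Neumann series $\sum_{n\geq 0}(-1)^n(R_f(x)((g-f)\ot 1))^n$); therefore $g\ot 1-1\ot S(x)$ is invertible, i.e. $x\notin\Sigma_{g,F}(E)$. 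Thus $\Sigma_{g,F}(E)\cap C=\emptyset$, which together with $\Sigma_{g,F}(E)\subset\disf{0}{R_0}$ gives $\Sigma_{g,F}(E)\subset\disf{0}{R_0}\setminus C=\disf{0}{R_0}\cap U\subset U$. The only genuinely non-formal point, and the one to watch, is the boundedness of $R_f$ on the compact set $C$ together with the good behaviour of norms under the base change $E\to E\ct_F\Hx$ (submultiplicativity of the tensor seminorm and the norm-$\leq 1$ evaluation maps); the rest is the usual perturbation argument, which transfers verbatim to the non-archimedean setting since $\nor u<1$ already guarantees convergence of $\sum(-u)^n$.
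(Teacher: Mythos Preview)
Your proof is correct and follows essentially the same strategy as the paper's: confine all spectra to a fixed closed disc, cover the compact set $C=\disf{0}{R_0}\setminus U$ by finitely many affinoids contained in the resolvent set, and use the Neumann-series perturbation argument there. The only differences are cosmetic: you bound $\nsp g$ via $\nor g\leq\max(\nor f,1)$ using the ultrametric triangle inequality, whereas the paper invokes upper semicontinuity of the spectral seminorm to get $\nsp g<\nsp f+\epsilon$; and you carry out the invertibility argument pointwise in $E\ct_F\Hx$ with a uniform bound $M=\sup_{x\in C}\nor{R_f(x)}$, whereas the paper works directly in the affinoid algebras $E\ct_F\cA_{V_{x_i}}$ and takes $\eta_1=\min_i\nor{(f\ot1-1\ot T)^{-1}}_{E\ct_F\cA_{V_{x_i}}}^{-1}$. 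Both routes yield the same $\delta$ up to the choice of cover.
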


  \begin{proof}
   We set $\Sigma_f:=\Sigma_{f,F}(E)$ and
   $\Sigma_g:=\Sigma_{g,F}(E)$. Let $U$ be an open
   neighborhood of $\Sigma_f$. Let $\epsilon>0$, we have:
   \[\AF\setminus U=[(\AF\setminus
   U)\cap\disf{0}{\nsp{f}+\epsilon}]\bigcup [\AF\setminus(U\cup\disf{0}{\nsp{f}+\epsilon})].\]
Since the spectral semi-norm is upper semi-continuous, there exists
$\eta_0>0$ such that
\[\nor{f-g}<\eta_0\Rightarrow \nsp{g}<\nsp{f}+\epsilon.\]
Therefore, we obtain
\[\Sigma_g\subset\disf{0}{\nsp{g}}\subset\disf{0}{\nsp{f}+\epsilon}\]
(cf. \cite[Theorem~7.1.2]{Ber}). Hence, we have
\[\nor{f-g}<\eta_0\Rightarrow\AF\setminus(U\cup\disf{0}{\nsp{f}+\epsilon})\subset\AF\setminus\Sigma_g.\]
We prove now that there exists $\eta_1>0$ such that
\[\nor{f-g}<\eta_1\Rightarrow (\AF\setminus
 U)\cap\disf{0}{\nsp{f}+\epsilon}\subset \AF\setminus \Sigma_g.\]

Let $x\in \AF\setminus U$, and let $V_x\subset\AF\setminus \Sigma_f$
be an affinoid neighborhood of $x$. Then $f\ot 1- 1\ot T$ is
invertible in $E\ct_F\cA_{V_x}$. We set $\eta_x=\nor{(f\ot 1- 1\ot
 T)\-1}\-1$. Since for all $g\in E$ we have $\nor{f-g}=\nor{f\ot 1-g\ot 1}$, if $\nor{f-g}<\eta_x$ then $g\ot 1-1\ot T$
is invertible in $B\ct_F \cA_{V_x}$ (cf. \cite[\S 2.4
Proposition~3]{bousp}). Therefore, we obtain

\[\nor{f-g}<\eta_x\Rightarrow V_x\subset \AF\setminus \Sigma_g.\]
As $(\AF\setminus U)\cap\disf{0}{\nsp{f}+\epsilon}$ is compact, there
exists a finite subset $\{x_1,\cdots, x_m\}\subset \AF\setminus U$
such that $(\AF\setminus U)\cap\disf{0}{\nsp{f}+\epsilon}\subset
\bigcup_{i=1}^mV_{x_i}$. We set $\eta_1:=\min_{1\leq i\leq
 m}\eta_{x_i}$.
Hence, we obtain
\[\nor{f-g}<\eta_1\Rightarrow (\AF\setminus U)\cap\disf{0}{\nsp{f}+\epsilon}\subset
 \bigcup_{i=1}^mV_{x_i}\subset \AF\setminus \Sigma_g.\]
We set $\delta:=\min(\eta_0,\eta_1)$. Consequently, we obtain:

\[\nor{f-g}<\delta\Rightarrow \AF\setminus U\subset\AF\setminus \Sigma_g.\]\end{proof}

\begin{Theo}\label{sec:continuity-results-1}
   Let $E$ be a nonzero \emph{commutative} $F$-Banach algebra. The spectrum
   map \[\Sigma_{.,F}:E\to \cK(\AF)\] is continuous.
  \end{Theo}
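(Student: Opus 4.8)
The plan is to check continuity of $\Sigma_{.,F}$ at an arbitrary $f\in E$. Since the sets \eqref{eq:1} generate the exponential topology, it is enough to fix one, say $(U,\{U_i\}_{i\in I})$, that contains $\Sigma_{f,F}(E)$ — so $I$ is finite, $\Sigma_{f,F}(E)\subset U$, and $\Sigma_{f,F}(E)\cap U_i\ne\emptyset$ for every $i$ — and to exhibit $\delta>0$ such that $\nor{f-g}<\delta$ forces both $\Sigma_{g,F}(E)\subset U$ and $\Sigma_{g,F}(E)\cap U_i\ne\emptyset$ for all $i$. The first condition is exactly Lemma~\ref{sec:continuity-results}, which supplies some $\delta_0>0$ and uses nothing about commutativity. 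The substantive point is the second one — the spectrum must not shrink away from a $U_i$ it meets — and commutativity is indispensable here: the Example above shows the conclusion fails for a non-commutative $E$.

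For this second condition I would use the Gelfand description of the spectrum of a nonzero commutative $F$-Banach algebra: $\cM(E)$ is nonempty and compact, and $\Sigma_{h,F}(E)$ is the image of the continuous map $\rho_h\colon\cM(E)\to\AF$ sending $x$ to the point of $\AF$ characterised (for the coordinate $S$, using that $F$ is algebraically closed here) by $|S-a|_{\rho_h(x)}=|h-a|_x$ for all $a\in F$; see \cite[Theorem~7.1.2]{Ber} (alternatively this is readable off from the Shilov idempotent theorem, Theorem~\ref{sec:notat-defin-2}). Granting it, fix $i\in I$ and pick $x_i\in\cM(E)$ with $\rho_f(x_i)\in U_i$, which exists since $\Sigma_{f,F}(E)=\rho_f(\cM(E))$ meets $U_i$. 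As $F$ is algebraically closed, the topology of $\AF$ is the coarsest for which $\eta\mapsto|S-a|_\eta$ is continuous for each $a\in F$, so there are $a_1,\dots,a_k\in F$ and $\epsilon>0$ with
\[
\bigl\{\eta\in\AF:\ \bigl|\,|S-a_j|_\eta-|S-a_j|_{\rho_f(x_i)}\,\bigr|<\epsilon \ \text{ for } j=1,\dots,k\bigr\}\subset U_i.
\]

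Now for any $g\in E$ one has $|S-a_j|_{\rho_g(x_i)}=|g-a_j|_{x_i}$, hence $\bigl|\,|S-a_j|_{\rho_g(x_i)}-|S-a_j|_{\rho_f(x_i)}\,\bigr|=\bigl|\,|g-a_j|_{x_i}-|f-a_j|_{x_i}\,\bigr|\le|g-f|_{x_i}\le\nor{f-g}$, and this last bound is uniform in $x_i$ and in $j$ — this uniformity is the mechanism that makes the argument work. So $\nor{f-g}<\epsilon$ puts $\rho_g(x_i)$ in the displayed neighbourhood, hence in $U_i$; and $\rho_g(x_i)\in\Sigma_{g,F}(E)$, so $\Sigma_{g,F}(E)\cap U_i\ne\emptyset$. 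Taking $\delta_i:=\epsilon$ and $\delta:=\min\bigl(\{\delta_0\}\cup\{\delta_i:i\in I\}\bigr)$, a minimum over a finite set and hence positive, we get $\nor{f-g}<\delta\Rightarrow\Sigma_{g,F}(E)\in(U,\{U_i\}_{i\in I})$; as $(U,\{U_i\}_{i\in I})$ was an arbitrary generating open containing $\Sigma_{f,F}(E)$, this proves continuity at $f$.

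The step I expect to be the main obstacle is the commutative identification $\Sigma_{h,F}(E)=\rho_h(\cM(E))$ with $\rho_h$ continuous, i.e.\ making precise that the Berkovich spectrum of $h$ is the continuous image of $\cM(E)$; everything after that is bookkeeping with the exponential topology and with Lemma~\ref{sec:continuity-results}. Note that the perturbation estimate involves only the degree-one functions $S-a$, which is exactly why $\nor{f-g}$ controls it uniformly over $\cM(E)$ and why this route collapses in the non-commutative setting of the Example. Should one prefer to avoid quoting the Gelfand picture, the lower-semicontinuity half can instead be obtained by applying Theorem~\ref{sec:notat-defin-2} to the open–closed subset of $\cM(E)$ lying over a small affinoid $V\subset U_i$ meeting $\Sigma_{f,F}(E)$, representing the resulting nonzero idempotent by a Cauchy integral along a fixed contour as in Proposition~\ref{sec:cauchy-integral}, and observing that this idempotent would be forced to vanish were $\Sigma_{g,F}(E)$ to miss $V$ — impossible once $\nor{f-g}$ is small, by continuity of the Cauchy integral in its argument.
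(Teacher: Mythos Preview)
Your proof is correct and takes a genuinely different route from the paper's. The paper argues by contradiction: assuming some $x\in\Sigma_f$ has a neighbourhood missed by all $\Sigma_{f_n}$, it passes to $E\ct_F\Hx$, uses Lemma~\ref{sec:spectr-theory-sense} to bound $\nsp{(f_n\ot 1-1\ot T(x))^{-1}}$, and then exploits that in a \emph{commutative} Banach algebra the spectral seminorm is submultiplicative to conclude $\nsp{1-uu_n^{-1}}\to 0$, forcing $u=f\ot 1-1\ot T(x)$ to be invertible---a contradiction. Commutativity enters there via submultiplicativity of $\nsp{\,\cdot\,}$.

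You instead use the Gelfand picture directly: $\Sigma_{h,F}(E)$ is the image of the continuous map $\rho_h:\cM(E)\to\AF$, and the one-line estimate $\bigl|\,|g-a|_{x_i}-|f-a|_{x_i}\,\bigr|\le\nor{f-g}$, uniform over $\cM(E)$, shows $\rho_g(x_i)$ stays inside any prescribed basic neighbourhood of $\rho_f(x_i)$. This is more geometric and avoids the base change to $\Hx$ and the spectral-norm computation; the price is that you must invoke (or prove) the identification $\Sigma_{h,F}(E)=\rho_h(\cM(E))$, which you correctly flag as the only nontrivial input. Your alternative sketch via the Shilov idempotent and the Cauchy integral is in fact the mechanism the paper uses later, in Lemma~\ref{sec:continuity-results-2}, to obtain the analogous lower-semicontinuity statement without commutativity when the spectrum is totally disconnected. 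Both arguments rely on the standing convention in this section that $F$ is algebraically closed; you use it for the description of the topology of $\AF$ by the functions $|S-a|$, while the paper's proof uses it implicitly through the base-change formula for the spectrum.
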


  \begin{proof}
  We prove the statement by contradiction. Suppose that
  there exists an element $f\in E$ and a sequence $(f_n)_{n\in
   \NN}$ that converges to $f$, such that
  $(\Sigma_{f_n,F}(E))_{n\in \NN}$ does not converges to
  $\Sigma_{f,F}(E)$. We set $\Sigma_f:=\Sigma_{f,F}(E)$ and
  $\Sigma_{f_n}:=\Sigma_{f_n,F}(E)$. Note that, by
  Lemma~\ref{sec:continuity-results}, for any open neighborhood
  $U\subset\AF$ of $\Sigma_f$, there exists $N\in \NN$ such that
  we have $\Sigma_{f_n}\subset U$ for all $n>N$. Therefore, there
  exists $x\in \Sigma_f$ and an open neighborhood $U_x$ of $x$,
  such that $\Sigma_{f_n}\cap U_x=\emptyset$. We know
  that $\pik{\Hx}\-1(\Sigma_{f_n})=\Sigma_{f_n\ot 1, \Hx}(E\ct_F
  \Hx)$(cf. \cite[Theorem~7.1.6]{Ber}). Then we have
  \[\Sigma_{f_n\ot 1,\Hx}(E\ct_F \Hx)\cap
   \pik{\Hx}\-1(U_x)=\emptyset\]
  Since $\pik{\Hx}\-1(U_x)$ is a neighborhood of $T(x)$, there exists $\epsilon>0$ such that
  $\disco{\Hx}{T(x)}{\epsilon}\subset \pik{\Hx}\-1(U_x)$. Hence, we
  have $
  \disco{\Hx}{T(x)}{\epsilon}\subset\A{\Hx}\setminus \Sigma_{f_n\ot 1,\Hx}(E\ct_F \Hx)$. Therefore, we obtain\[\nsp{(f_n\ot
   1- 1\ot T(x))\-1}<\fra{\epsilon}\] (cf. Lemma
  \ref{sec:spectr-theory-sense}). Since $E\ct_F \Hx$ is
  commutative, the spectral semi-norm is sub-multiplicative. We set $u:=f\ot 1-1\ot
  T(x)$ and $u_n:=f_n\ot 1- 1\ot T(x)$. Therefore, we obtain
  \[\nsp{1-uu_n\-1}=\nsp{u_n\-1(u_n-u)}\leq
   \nsp{u_n\-1}\nsp{u_n-u}\leq\fra{\epsilon}\nor{u_n-u}.\]
  Since $f_n$ converges to $f$, $u_n$ converges to $u$. This
  implies that $uu_n\-1$ is invertible and $u$ is right
  invertible. By analogous arguments, $u_n\-1u$ is invertible and
  $u$ is left invertible. Hence, $u$ is invertible which
  contradicts the hypothesis.
  \end{proof}

  \begin{Lem}\label{sec:continuity-results-2}
   Let $E$ be a nonzero $F$-Banach algebra and $f\in E$. Let $\cS$ be a
   closed and open subset of $\Sigma_{f,F}(E)$. For any open
   subset $\cU\subset\AF$
   containing $\cS$, there exists a positive real number
   $\delta$ such that
   \[\nor{f-g}<\delta\Rightarrow \Sigma_{g,F}(E)\cap \cU\ne
   \emptyset.\]
  
  \end{Lem}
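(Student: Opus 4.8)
The plan is to manufacture from the clopen piece $\cS$ a \emph{non-zero} idempotent $e\in E$ via the holomorphic functional calculus, and then to show that this idempotent survives small perturbations of $f$ in a way that forces the perturbed spectrum to keep meeting any neighbourhood of $\cS$. We may assume $\cS\neq\emptyset$ (the statement fails for $\cU=\emptyset$ otherwise, and in the applications $\cS$ is always a non-empty clopen component). Since $\cS$ and $\Sigma_{f,F}(E)\setminus\cS$ are disjoint compact subsets of $\AF$, I would first choose affinoid neighbourhoods $V_1\supset\cS$ and $V_2\supset\Sigma_{f,F}(E)\setminus\cS$ with $V_1\cap V_2=\emptyset$, $V_1\subset\cU$, and $V_1,V_2\subset\AF$. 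Let $\chi\in\cA_{V_1\sqcup V_2}$ be the idempotent function equal to $1$ on $V_1$ and $0$ on $V_2$; it defines an element of $\cO_{\AF}(\Sigma_{f,F}(E))$, so by Theorem~\ref{sec:notat-defin} the element $e:=\chi(f)\in E$ is idempotent.

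The first key point is that $e\neq 0$. One clean way: replace $E$ by the closed commutative subalgebra $E_0$ generated by $f$ together with the values of its resolvent $R_f$; then $\Sigma_{f,F}(E_0)=\Sigma_{f,F}(E)$, the space $\cM(E_0)$ is canonically identified with $\Sigma_{f,F}(E)$, and $\chi$ restricts to the characteristic function of the clopen subset $\cS\subset\cM(E_0)$. By the Shilov idempotent theorem (Theorem~\ref{sec:notat-defin-2}) there is an idempotent of $E_0$ whose Gelfand transform is $\chi$; it coincides with $e$, and since $\cS\neq\emptyset$ its Gelfand transform is not identically zero, hence $e\neq 0$. (Alternatively one invokes the spectral mapping theorem for the holomorphic functional calculus \cite{Ber}: $\Sigma_{e,F}(E)=\chi(\Sigma_{f,F}(E))\ni 1$.) In particular $\nor{e}=\nor{e^2}\leq\nor{e}^2$ forces $\nor{e}\geq 1$.

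Next I would establish continuity of the functional calculus under perturbation of $f$, restricted to the \emph{fixed} function $\chi$. Fix a contour $\gamma=(\cW,V')$ of $\Sigma_{f,F}(E)$ with $\cW\subset V_1\sqcup V_2$ an affinoid neighbourhood of $\Sigma_{f,F}(E)$ contained in $\AF$ (possible by Lemma~\ref{sec:notat-defin-1}). By Lemma~\ref{sec:continuity-results} applied to the interior of $\cW$ there is $\delta_0>0$ with $\nor{f-g}<\delta_0\Rightarrow\Sigma_{g,F}(E)\subset\cW$; shrinking $\delta_0$ we may also assume $\Sigma_{g,F}(E)\cap V'=\emptyset$, so $\gamma$ is a contour of $\Sigma_{g,F}(E)$ too and $e_g:=\chi(g)$ is a well-defined idempotent of $E$. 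By Proposition~\ref{sec:cauchy-integral}, $e=\cE R_f(\chi)$ and $e_g=\cE R_g(\chi)$ for the \emph{same} Cauchy integral operator $\cE$. As $g\to f$ we have $g\ot 1-1\ot T\to f\ot 1-1\ot T$ in $E\ct_F(\cA_{V'})_0$, with both elements invertible, so $R_g\to R_f$ there by continuity of inversion in a Banach algebra; since $\cE$ is bounded this gives $\cE R_g\to\cE R_f$ in $\Lk{\cA_{\cW},E}$, whence $e_g=(\cE R_g)(\chi)\to(\cE R_f)(\chi)=e$. Hence there is $\delta\in(0,\delta_0]$ such that $\nor{f-g}<\delta$ implies $\nor{e_g-e}<1\leq\nor{e}$, and therefore $e_g\neq 0$.

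Finally I conclude by contradiction: if $\nor{f-g}<\delta$ and $\Sigma_{g,F}(E)\cap\cU=\emptyset$, then since $V_1\subset\cU$ we get $\Sigma_{g,F}(E)\cap V_1=\emptyset$, so $\Sigma_{g,F}(E)\subset V_2$ and $\chi$ vanishes on a neighbourhood of $\Sigma_{g,F}(E)$; the functional calculus then gives $e_g=\chi(g)=0$, contradicting $e_g\neq 0$. Hence $\Sigma_{g,F}(E)\cap\cU\neq\emptyset$ for all $g$ with $\nor{f-g}<\delta$. The main obstacle is the non-vanishing $e\neq 0$: one genuinely needs that a non-empty clopen part of the spectrum is \emph{detected} by the holomorphic functional calculus, which is what forces the reduction to a commutative algebra plus Shilov's theorem (or the spectral mapping theorem); once $e\neq 0$ is in hand, the perturbation step is a routine consequence of the Cauchy integral representation of Proposition~\ref{sec:cauchy-integral} and continuity of inversion.
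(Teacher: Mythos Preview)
Your proof is correct and follows essentially the same strategy as the paper's: both construct the idempotent $\chi(f)\neq 0$ via the holomorphic functional calculus and Shilov's theorem, use the Cauchy integral representation (Proposition~\ref{sec:cauchy-integral}) together with continuity of $R_g$ in $g$ to show $\chi(g)\to\chi(f)$, and derive a contradiction from $\chi(g)=0$ when $\Sigma_{g,F}(E)$ misses $V_1$. The only cosmetic difference is that the paper phrases the perturbation step as a contradiction with a sequence $(f_n)$ rather than producing an explicit $\delta$, and invokes \cite[Propositions~7.1.4, 7.2.4]{Ber} directly for the non-vanishing of the idempotent where you pass to a commutative subalgebra.
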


   \begin{proof}
     We set $\Sigma_f:=\Sigma_{f,F}(E)$. We suppose that there
     exists an open neighborhood $\cU$ of $\cS$ and a sequence
     $(f_n)_{n\in\NN}$ that converges to $f$ with
     $\Sigma_{f_n,F}(E)\cap \cU=\emptyset$ for all $n\in\NN$. Let $(U,V)$ be a
     contour of $\Sigma_f$ (which exists by Lemma~\ref{sec:notat-defin-1}) such that $U=U_1\cup U_2$ where $U_1$
     (resp. $U_2$) is an affinoid neighborhood of $\cS$
     (resp. $\Sigma_f\setminus\cS$) and $U_1\cap
     U_2=\emptyset$. We can assume that $\Sigma_{f_n,F}(E)\subset
     U$ (cf. Lemma~\ref{sec:continuity-results}) and $V\subset
     \AF\setminus \Sigma_{f_n,F}(E)$ for all $n\in \NN$, hence
     $(U,V)$ is a contour of $\Sigma_{f_n,F}(E)$ for all $n\in \NN$. We can assume that $U_1\subset \cU$. Let
     $1_{U_1}\in \cA_U$ be the characteristic function of
     $U_1$. By Proposition~\ref{sec:cauchy-integral}, we have
     $\cE R_f(1_{U_1})=1_{U_1}(f)$, which is an idempotent element
     of $E$ (cf.\cite[Propositions~7.1.4, 7.2.4]{Ber} and
     Theorem~\ref{sec:notat-defin-2}), since $U_1$ is non-empty $1_{U_1}(f)$ is not equal to
     $0$. Since $\Sigma_{f_n}\cap U_1=\emptyset$, by
     Theorem~\ref{sec:notat-defin} we have
     $1_{U_1}(f_n)=0$. Hence, by
     Proposition~\ref{sec:cauchy-integral} we obtain $\cE R_{f_n}(1_{U_1})=1_{U_1}(f_n)=0$. 
     On the one hand we have $(R_{f_n})_{n\in\NN}$ converges to
     $R_f$ in $E\ct_F \cA_V$. On other hand, the map $\cE: E\ct_F (\cA_V)_0\to
     \Lk{(\cA_U)_0,E}$ is continuous. Hence we obtain a
     contradiction since $1_{U_1}(f)\ne 0$. 
    \end{proof}

    \begin{Theo}\label{sec:continuity-results-3}
    Let $E$ be a nonzero $F$-Banach algebra and $f\in E$. If 
    $\Sigma_{f,F}(E)$ is totally disconnected, then the map
    $\Sigma_{.,F}: E \to \cK(\AF)$ is continuous at $f$. 
   \end{Theo}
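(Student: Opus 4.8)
The plan is to reduce continuity at $f$ to a statement about the subbasic open sets $(U,\{U_i\}_{i\in I})$ of the exponential topology, and then to feed the two preceding lemmas into it: Lemma~\ref{sec:continuity-results} will give the inclusion $\Sigma_{g,F}(E)\subset U$ for $g$ close to $f$, while Lemma~\ref{sec:continuity-results-2} will give $\Sigma_{g,F}(E)\cap U_i\neq\emptyset$. The hypothesis that $\Sigma_{f,F}(E)$ is totally disconnected will be used exactly once, to produce the clopen subsets that Lemma~\ref{sec:continuity-results-2} needs.

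First I would note that the members $(U,\{U_i\}_{i\in I})$ of the generating family that contain $\Sigma_{f,F}(E)$ already form a neighbourhood basis of $\Sigma_{f,F}(E)$ in $\cK(\AF)$: a finite intersection of such members is again one of them, since $(U,\{U_i\}_{i\in I})\cap(U',\{U'_j\}_{j\in J})=(U\cap U',\{U_i\cap U'\}_{i\in I}\cup\{U'_j\cap U\}_{j\in J})$ once one observes that a compact set contained in $U\cap U'$ meets $U_i$ if and only if it meets $U_i\cap U'$, and that $\{U_i\cap U'\}_{i\in I}$ covers $U\cap U'$ because $\{U_i\}_{i\in I}$ covers $U$. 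Hence it is enough to show: for each such $(U,\{U_i\}_{i\in I})$ there is $\delta>0$ with $\nor{f-g}<\delta\Rightarrow\Sigma_{g,F}(E)\in(U,\{U_i\}_{i\in I})$.

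So I would fix $(U,\{U_i\}_{i\in I})$ with $\Sigma_{f,F}(E)\subset U$, with $\{U_i\}_{i\in I}$ a finite open cover of $U$, and with $\Sigma_{f,F}(E)\cap U_i\neq\emptyset$ for all $i$. Applying Lemma~\ref{sec:continuity-results} to the open neighbourhood $U$ produces $\delta_0>0$ such that $\nor{f-g}<\delta_0$ forces $\Sigma_{g,F}(E)\subset U$. For each $i\in I$ I would pick $x_i\in\Sigma_{f,F}(E)\cap U_i$; since $\Sigma_{f,F}(E)$ is a compact subset of the Hausdorff space $\AF$ and is totally disconnected, it is zero-dimensional, so there is a subset $\cS_i\subset\Sigma_{f,F}(E)$, clopen in $\Sigma_{f,F}(E)$, with $x_i\in\cS_i\subset U_i\cap\Sigma_{f,F}(E)$. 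Then Lemma~\ref{sec:continuity-results-2}, applied to the clopen set $\cS_i$ and the open set $U_i$, yields $\delta_i>0$ such that $\nor{f-g}<\delta_i$ forces $\Sigma_{g,F}(E)\cap U_i\neq\emptyset$. Taking $\delta:=\min(\delta_0,\min_{i\in I}\delta_i)$, which is positive because $I$ is finite, gives for $\nor{f-g}<\delta$ both $\Sigma_{g,F}(E)\subset U$ and $\Sigma_{g,F}(E)\cap U_i\neq\emptyset$ for every $i\in I$, that is $\Sigma_{g,F}(E)\in(U,\{U_i\}_{i\in I})$, as wanted.

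The essential content is already packaged in Lemmas~\ref{sec:continuity-results} and~\ref{sec:continuity-results-2}, so no serious obstacle remains; the only place where the hypothesis is genuinely needed is the extraction of the clopen pieces $\cS_i$, and the only step that asks for a bit of bookkeeping is the reduction to the generating sets, i.e.\ checking that finite intersections of the $(U,\{U_i\})$ stay in that class and that they form a neighbourhood basis of $\Sigma_{f,F}(E)$.
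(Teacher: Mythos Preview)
Your proof is correct and follows essentially the same approach as the paper's: both feed Lemma~\ref{sec:continuity-results} and Lemma~\ref{sec:continuity-results-2} into a subbasic neighbourhood $(U,\{U_i\}_{i\in I})$, using total disconnectedness only to produce the clopen pieces $\cS_i\subset U_i$. The only cosmetic differences are that the paper argues with sequences rather than $\delta$'s, and that you include the bookkeeping verifying that the generating sets form a neighbourhood basis, which the paper leaves implicit.
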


   \begin{proof}
    Let $(f_n)_{n\in \NN}\subset E$ be a sequence that converges
    to $f$. Let $(U,\{U_i\}_{i\in I})\subset K(\AF)$ be an open neighborhood of
    $\Sigma_{f,F}(E)$. Let $x\in \Sigma_{f,F}(E)$, since $\Sigma_{f,F}(E)$ is totally
    disconnected, the closed-open subsets of $\Sigma_{f,F}(E)$
    form a fundamental system of neighborhoods of $x$. Hence, for each
    $i$ there exists a closed-open subset
    $\cS\subset\Sigma_{f,F}(E)$ such that $\cS\subset U_i$. By Lemma~\ref{sec:continuity-results},
    there exists $N_0\in \NN$ such that for all $n>N_0$ we have
    $\Sigma_{f_n,F}(E)\subset U$. By
    Lemma~\ref{sec:continuity-results-2}, for all $i\in
    I$ there exists $N_i\in\NN$ such that for all $n>N_i$, $\Sigma_{f_n,F}(E)\cap U_i\ne\emptyset$. Therefore, for
    all $n>\max(N_0,\max_{i\in I}(N_i))$ we have
    $\Sigma_{f_n,F}(E)\in (U,\{U_i\}_{i\in I})$.
   \end{proof}

   \begin{cor}\label{sec:continuity-results-4}
     Let $\Omega\in E(F)$. The map $\Sigma_{.,F}(\cM_n(\Omega)):
     \cM_n(\Omega)\to \cK(\AF)$ is continuous.
    \end{cor}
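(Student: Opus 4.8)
The plan is to reduce this to Theorem~\ref{sec:continuity-results-3}. Since that result imposes no commutativity on the Banach algebra, it is enough to show that $\Sigma_{M,F}(\cM_n(\Omega))$ is totally disconnected for every $M\in\cM_n(\Omega)$; in fact I will show it is a finite set with at most $n$ points.

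First I would unwind the definition. For $x\in\AF$ one has $\cM_n(\Omega)\ct_F\Hx=\cM_n(\Omega\ct_F\Hx)$ (the module being finite free over $\Omega$, the completed tensor product agrees with the ordinary one over $\Omega\ct_F\Hx$), and over the commutative Banach ring $\Omega\ct_F\Hx$ a square matrix is invertible precisely when its determinant is a unit. Hence $x\in\Sigma_{M,F}(\cM_n(\Omega))$ if and only if $\chi_M(S(x))$ is a non-unit of $A:=\Omega\ct_F\Hx$, where $\chi_M(T)=\det(TI-M)\in\Omega[T]$ is the characteristic polynomial of $M$ and, as usual, $S(x)$ abbreviates $1\ot S(x)$.

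The main step — and the point I expect to be the real obstacle — is to turn ``$\chi_M(S(x))$ is a non-unit of $A$'' into a constraint pinning $x$ down to finitely many possibilities. The tensor norm on $A$ need not be multiplicative, so one cannot simply evaluate at a residue field; instead I would argue through characters. The units of $A$ form an open set, so a non-unit lies in a closed maximal ideal, and the corresponding quotient is a complete valued field $L\in E(F)$ (a non-archimedean Banach division algebra), giving a bounded character $\psi\colon A\to L$ with $\psi(\chi_M(S(x)))=0$. The restrictions $\psi|_\Omega$ and $\psi|_{\Hx}$ are isometric embeddings, because a bounded character of a valued field is isometric ($|\psi(a)|\le\nor{a}$ applied to both $a$ and $a^{-1}$). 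Pushing the coefficients of $\chi_M$ through $\psi|_\Omega$, the equality $\psi(\chi_M(S(x)))=0$ says that $\psi(S(x))\in L$ is a root of a monic polynomial of degree $n$ with coefficients in $\psi(\Omega)\cong\Omega$; writing $\chi_M=\prod_{i=1}^n(T-\lambda_i)$ with $\lambda_i\in\wac$, any such root is the image of one of the $\lambda_i$ under an isometric $\Omega$-embedding $\wac\hookrightarrow\widehat{L^{alg}}$. Since $\psi|_{\Hx}$ is isometric and $S(x)$ generates $\Hx$ as a complete valued extension of $F$, the point $x$ is thereby forced to be the image, under the canonical projection $\A{\wac}\to\AF$, of the rigid point of $\A{\wac}$ given by $S\mapsto\lambda_i$ for some $i$. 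Therefore $\Sigma_{M,F}(\cM_n(\Omega))$ is contained in this set of at most $n$ points, in particular it is finite.

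Being finite, $\Sigma_{M,F}(\cM_n(\Omega))$ is totally disconnected, so Theorem~\ref{sec:continuity-results-3} yields continuity of $\Sigma_{.,F}(\cM_n(\Omega))$ at every $M$, hence everywhere. Everything except the passage through $\psi$ is routine bookkeeping; as a sanity check, for a scalar matrix $M=\lambda I$ the argument recovers $\Sigma_{M,F}(\cM_n(\Omega))=\Sigma_{\lambda,F}(\Omega)$, a single point.
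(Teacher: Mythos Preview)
Your approach is correct and matches the paper's implied argument: the corollary is immediate from Theorem~\ref{sec:continuity-results-3} once one knows that $\Sigma_{M,F}(\cM_n(\Omega))$ is finite for every $M$, and you supply precisely that computation. One small technical point at the step you yourself flag: the quotient $A/\mfm$ is a Banach field whose quotient norm need not be multiplicative a priori, so it is cleaner to bypass the maximal ideal and directly pick a point $y\in\cM(A)$ with $|\chi_M(S(x))|_y=0$ (such a $y$ exists because $\cM(A/\mfm)\ne\emptyset$ by \cite[Theorem~1.2.1]{Ber}, and any point there pulls back to $\cM(A)$), then take $L=\h{y}$; the rest of your argument goes through verbatim.
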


\section{Main results of the paper}\label{sec:variation-spectrum-f}
\begin{conv}
 In this section we assume that $F$ is algebraically closed and $\crk=p>0$.
\end{conv}

\subsection{Variation of the spectrum the case of an analytic domain
 of $\AF$}\label{sec:vari-spectr-case-6}
Let $X$ be a connected affinoid domain of $\AF$ and let $(\F,\nabla)$
be a differential equation over $X$ of rank $n$. We consider on $X$
the triangulation $\cS=\partial (X)$ (Shilov boundary of $X$). Let
$x\in X_{[2,3]}$, $b\in T_x\Pf$, let $D_b$ be the open disk containing
$b$ and $c\in D_b\cap F$. Let $f$ be a coordinate function on $X$, in the
sense where the induced map $X\to \AF$ by $F[T]\to \cO(X)$,
$T-c\mapsto f$ is a closed immersion and $f(X)=X$. We assume that for all $y\in
(c,x]\cap X$ we have $f(y)=y$. We set $d_f=f\D{f}$, to simplify
notation we set $\nabla_f:=\nabla_{d_f}$. In this section we prove that the following
map is continuous
\begin{equation}
 \label{eq:12}
 \Fonction{\Sigma_{\F,c,f}: (c,x]\cap X}{\cK(\AF)}{y}{\Sigma_{\nabla_{f},F}(\Lk{\F(y)})}.
\end{equation}

In order to do that, we need to prove the following intermediate
results.
\begin{rem}\label{sec:vari-spectr-case}
Let $a\in F$, we set $\cR_a(x)$
 to be the spectral radius of convergence of $(\Hx,d_f-a)$. Then
 $\cR_a$ is constant over $(c,x]\cap X$ (cf. \cite[Lemma~5.35]{Azz24}). Note
 that, for all $a\in F$ we have $\cR_a(x)=\cR_{-a}(x)=\cR(\delta(a))$.  
\end{rem}

 \begin{Lem}\label{sec:vari-spectr-case-1}
 Suppose that for $y_0\in (c,x]\cap X$, we have
 $\Spe(y_0)=\{z\}+\ZZ_p$. Let $c_1,\cdots, c_m$ satisfying the
 following conditions:
 \begin{itemize}
 \item for all $i$, we have $|z-c_i|=r_F(z)$, i.e $z=x_{c_i,r_F(z)}$;
 \item for all $i\ne j$, we have $\delta(c_i-c_j)=|c_i-c_j|=r_F(z)$; 
 \item there exists an interval neighborhood $I$ of $y_0$ in $(c,y_0]\cap X$
  (resp. $[y_0,x]$ if $y_0\ne x$), for which we have
  \begin{itemize}
  \item for all $i$ we have
   $\RS_n(.,(\F(.),\nabla_f-c_i))>\RS_n(y_0,(\F(y_0),\nabla_f-c_i))$
   on $I\setminus\{y_0\}$;
  \item for all $i$ and $k$, $\RS_k(.,(\F(.),\nabla_f-c_i))$ is
   log-affine on $I$.
  \end{itemize}
 \end{itemize}
 We set $k_{c_i}=\min_{1\leq j\leq n}\{k;\;
 \RS_k(.,(\F(.),\nabla_f-c_i))>\RS_n(y_0,(\F(y_0),\nabla_f-c_i))
 \text{ on } I\}$.

 Then for each $y\in I$ we have the following decomposition
 \begin{equation}
  \label{eq:80}
  (\F(y),\nabla_f)=\left(\bigoplus_{i=1}^m(M_{c_i}(y),\nabla_{c_i}(y))\right)\oplus (M_m(y),\nabla_m(y)),
 \end{equation}
 with $\RS_k(y,
 (M_{c_i}(y),\nabla_{c_i}(y)-c_i))=\RS_{k_{c_i}+k-1}(y,(\F(y),\nabla_f-c_i))$
 and $\dim_{\Hy}M_{c_i}=n-k_{c_i}+1$.
\end{Lem}

\begin{proof}
 We point out that since $\Spe(y_0)=\{z\}+\ZZ_p$, then
 $(\F(y_0),\nabla_f-a)$ is pure for all $a\in F$. Hence, for all $i$
 we have
 $\RS_1(y_0,(\F(y_0),\nabla_f-c_i))=\RS_n(y_0,(\F(y_0),\nabla_f-c_i))=\cR(r_F(z))$
  (notation in \eqref{eq:94})
  (cf. Theorem~\ref{sec:spectr-theory-sense-1}).\\~

 We can obtain decomposition \eqref{eq:80} by iterating the following
 process for each $0<l\leq m$. We set
 $(M_0(y),\nabla_0(y))=(\F(y),\nabla_f)$, by decomposing with respect
 to the spectral radii of convergence of $(M_0(y),\nabla_0(y)-c_1)$
 (cf. Theorem~\ref{sec:robb-decomp-spectr})
 we obtain
 \begin{equation}
  \label{eq:67}
   (\F(y),\nabla_f)=(M_{c_1}(y),\nabla_{c_1}(y))\oplus (M_1(y),\nabla_1(y))
  \end{equation}
  with 
 $\RS_k(y,(M_{c_1}(y),\nabla_{c_1}(y)-c_1))=\RS_{k_{c_1}+k-1}(y,(\F(y),\nabla_f-c_1))$. Suppose
 now that for $2\leq l\leq m$ we have
 \begin{equation}
  \label{eq:72}
  (\F(y),\nabla_f)=\left(\bigoplus_{i=1}^{l-1}(M_{c_i}(y),\nabla_{c_i}(y))\right)\oplus (M_{l-1}(y),\nabla_{l-1}(y)),
 \end{equation}
 such that $\RS_k(y,
  (M_{c_i}(y),\nabla_{c_i}(y)-c_i))=\RS_{k_{c_i}+k-1}(y_0,(\F(y_0),\nabla_f-c_i))$
  and $\dim_{\Hy}M_{c_i}(y)=n-k_{c_i}+1$. We set
  $n_{l-1}=\dim_{\Hx}M_{l-1}$. Then, since for $i\ne j$
  \begin{equation}
  \label{eq:73}
 \RS_k(y,
  (M_{c_i}(y),\nabla_{c_i}(y)-c_j))=\min(\RS_k(y,
  (M_{c_i}(y),\nabla_{c_i}(y)-c_i)),\cR_{c_i-c_j}(y))=\cR_{c_i-c_j}(y)=\cR(r_F(z)),
  \end{equation}
  we have 
  \begin{equation}
 \label{eq:77}
   \RS_{n_{l-1}}(y,(M_{l-1}(y),\nabla_{l-1}(y)-c_l))=\RS_n(y,(\F(y),\nabla_f-c_l))
\end{equation}
 and
 \begin{equation}
 \label{eq:78}
  \RS_{n_{l-1}-(n-k_{c_l})}(y,(M_{l-1}(y),\nabla_{l-1}(y)-c_l))=\RS_{k_{c_l}}(y,(\F(y),\nabla_f-c_l)).
\end{equation}
We set
  $(M_{c_l}(y),\nabla_{c_l}(y))$ to be the differential module satisfying 
  \begin{equation}
   \label{eq:79}
   (M_{l-1}(y),\nabla_{l-1}(y))=(M_{c_l}(y),\nabla_{c_l}(y))\oplus (M_l(y),\nabla_l(y))
  \end{equation}
  with
  $\RS_k(y,(M_{c_l}(y),\nabla_{c_l}(y)-c_l))=\RS_{k_{c_l}+k-1}(y,(\F(y),\nabla_f-c_l))$,
  where \eqref{eq:79} is the decomposition with
  respect to the spectral radii of $(M_{l-1}(y),\nabla_{l-1}(y)-c_l)$
  (cf. Theorem~\ref{sec:robb-decomp-spectr}). 
 \end{proof}
 
\begin{Lem}\label{sec:vari-spectr-case-2}
Suppose that for $y_0\in (c,x]\cap X$, we have $\Spe
  (y_0)=\{z\}+\ZZ_p$. Then there exists an interval neighborhood $I$
  of $y_0$ in $(c,y_0]\cap X$ (resp. $[y_0,x]$ if $y_0\ne x$), such that
  \begin{itemize}
   \item if $z=a\in F$, then for all $y\in I$ we have
    \[\Spe(y)=\{x_{a,\psi_1(y)},\cdots,
     x_{a,\psi_\mu(y)}\}+\ZZ_p\]
    where $\{x_{a,\psi_1(y)},\cdots, x_{a,\psi_\mu(y)}\}$ has minimal
    cardinal, with
  $\psi_1,\cdots, \psi_\mu:I\to \R+$ are continuous maps
    on $I$, piecewise log-affine on $I\setminus\{y_0\}$, and
    $\psi_i(y_0)=0$ (i.e. $x_{a,\psi_i(y_0)}=a$).
   
  \item if $z\in \AF\setminus F$, then for all $y\in I$ we have
    \[\Spe(y)=\{x_{c_1,\phi_1(y)},\cdots,x_{c_\mu,\phi_\nu(y)}\}+\ZZ_p,\]
    and $\{x_{c_1,\phi_1(y)},\cdots,x_{c_\nu,\phi_\nu(y)}\}$ has
    minimal cardinality, with $c_1,\cdots,c_\nu\in F$ ($c_i$ are
    not necessarily different),
    $\phi_1,\cdots,\phi_{\nu}:I\to \R+$ piecewise log-affine maps on $I$, and $z=x_{c_i,\phi_i(y_0)}$.
   
  \end{itemize}
 \end{Lem}

 \begin{proof}
  We start by dealing with the easy case where $z=a\in F$. We choose
  $I$ to be
  a neighborhood interval of $y_0$ in $(c,y_0]\cap X$ (resp. $[y_0,x]$ if $y_0\ne x$), such that $\RS_i(.,(\F(.),\nabla_f-a))$ is log-affine for
  all $i$ (cf. Remark~\ref{sec:radii-conv-line-1}). Then $\RS_i(.,(\F(.),\nabla_f-a))$ may
  be constant and identically equal to $1$ or strictly decreasing on
  $I$. Hence, for all $y\in I$ and $e\in F$, we have
  \[\RS_i(y,(\F(y),\nabla_f-e)=\min(\RS_i(y,(\F(y),\nabla_f-a)),\cR_{e-a}(y)).\]
\\~\\
  We set
  \begin{equation}
 \label{eq:81}
   \psi_i(y)=
   \begin{cases}
    0& \text{ if } \RS_i(y,(\F(y),\nabla_f-a))=1\\
    \frac{|p|^k\omega}{\RS_i(y,(\F(y),\nabla_f-a))^{p^k}}& \text{
                               if }
                               \omega^{\fra{p^{k-1}}}\leq
                               \RS_i(y,(\F(y),\nabla_f-a))\leq
                               \omega^{\fra{p^k}},
                               \text{
                               with }
                               k\in\NN\setminus\{0\},\\
    \frac{\omega}{\RS_i(y,(\F(y),\nabla_f-a))} & \text{ if }
                          \RS_i(y,\F(y),\nabla_f-a)\leq \omega.
   \end{cases}
  \end{equation}
The maps $\psi_1,\cdots,\psi_n$ are clearly continuous on $I$ and
piecewise log-affine on $I\setminus\{y_0\}$. By Theorem~\ref{sec:spectr-theory-sense-1}, we should have $\Spe(y)=\bigcup_{i=1}^n\{
x_{a,\psi_i(y)}\}+\ZZ_p$. By choosing $\psi_{i_1},\cdots,\psi_{i_\nu}$
to be distinct and $\{\psi_{i_1},\cdots,\psi_{i_\nu}\}=\cup_{i=1}^n\{ \psi_i\}$, $\{x_{a,\psi_{i_1}(y)},\cdots,
x_{a,\psi_{i_\nu}(y)}\}$ has minimal cardinality.

Suppose now that $z\in \AF\setminus F$. Let $c_1,\cdots,c_m$ such that
\[\RS_n(.,(\F(.),\nabla_f-c_i))>\RS_n(y_0,(\F(y_0),\nabla_f-c_i)),\]
$z=x_{c_i,r_F(z)}$ and $\delta(c_i-c_j)=|c_i-c_j|=r_F(z)$ for $i\ne j$. Then
by Theorem~\ref{sec:radii-conv-line}, we can choose an interval $I$ neighborhood of $y$ in
$(c,x]\cap X$ (resp. $[y_0,x]$ if $y_0=x$), such that for all $i$ and
$k$, $\RS_k(.,(\F(.),\nabla_f-c_i))$ is log-affine on $I$. Hence, by
Lemma~\ref{sec:vari-spectr-case-1}, $m$ cannot
exceed $n$. We choose $m$ to be maximal. We set $k_{c_i}=\min_{1\leq j\leq n}\{j;\;
 \RS_j(.,(\F(.),\nabla_f-c_i))>\RS_n(y_0,(\F(y_0),\nabla_f-c_i))
 \text{ on } I\}$. Then by Lemma~\ref{sec:vari-spectr-case-1} we have the decomposition 
\begin{equation}\label{eq:82}
  (\F(y),\nabla_f)=\left(\bigoplus_{i=1}^m(M_{c_i}(y),\nabla_{c_i}(y))\right)\oplus (M_m(y),\nabla_m(y)),
 \end{equation}
with
$\RS_k(y,(M_{c_i}(y),\nabla_{c_i}(y)-c_i))=\RS_{k_{c_i}+k-1}(y,(\F(y),\nabla_f-c_i))$
and $\dim_{\Hy} M_{c_i}=n-k_{c_i}+1$. We set
$n_m:=\dim_{\Hy}M_m(y)$. Note that for $e\in F$, with
$\delta(e-c_{i_0})>r_F(z)$ for some $i_0$. Since
$\RS_k(.,(\F(.),\nabla_f-c_{i_0}))$ are log-affine on $I$ and
$\RS_k(.,(\F,\nabla_f-e))$ is continue, for all $y\in I$ we have
\begin{equation}
 \label{eq:91}
\RS_k (y,(\F(y),\nabla_f-e))=\min(\RS_k
(y,(\F(y),\nabla_f-c_{i_0})),\cR_{c_{i_0}-e}(y))\leq \cR(r_F(z)). 
\end{equation}
Therefore,
since $m$ is maximal for all $e\in F$ we have
$\RS_{n_m}(y,(M_m(y),\nabla_m-e))\leq \cR(r_F(z))$.\\
Hence, for each $c_i$, for all $e\in
F$ such that $|e-c_i|<r_F(z)$ (which implies in particular $\delta(e-c_i)<r_F(z)$), and all $y\in I$ we have
\[\RS_k(y,(M_{c_i}(y),\nabla_{c_i}(y)-e))=\RS_{k_{c_i}+k-1}(y,(\F(y),\nabla_{f}-e)).\]
This implies that $\RS_k(.,(M_{c_i}(.),\nabla_{c_i}(.)-e))$ is
continuous on $I$, and log-affine if $e=c_i$. Therefore, for all $y\in I$ and for all $e\in F$ we have
\begin{equation}
 \label{eq:83}
\RS_k(y,(M_{c_i}(y),\nabla_{c_i}(y)-e))=\min(\RS_k(y,(M_{c_i}(y),\nabla_{c_i}(y)-c_i)),\cR_{c_i-e}(y)). 
\end{equation}

Then by setting 
\begin{equation}
\label{eq:84}
 \phi_{i,k}(y)= \begin{cases}

    \frac{|p|^l\omega}{\RS_k(y,(M_{c_i}(y),\nabla_{c_i}(y)-c_i))^{p^l}}& \text{
                               if }
                               \omega^{\fra{p^{l-1}}}\leq
                               \RS_k(y,(M_{c_i}(y),\nabla_{c_i}(y)-c_i))\leq
                               \omega^{\fra{p^l}},
                               \text{
                               with }
                               l\in\NN\setminus\{0\},\\
    \frac{\omega}{\RS_k(y,(M_{c_i}(y),\nabla_{c_i}(y)-c_i))} & \text{ if }
                          \RS_k(y,(M_{c_i}(y),\nabla_{c_i}(y)-c_i))\leq \omega,
   \end{cases}  
\end{equation}
 and Theorem~\ref{sec:spectr-theory-sense-1}, we obtain
 $\Sigma_{\nabla_{c_i}(y),F}(\Lk{M_{c_i}(y)})=\bigcup_{k=1}^{n-k_{c_i}+1}\{
 x_{c_i,\phi_{i,k}(y)}\}+\ZZ_p$, with $\phi_{i,k}$ piecewise log-affine on
 $I$, and $\bigcup_{k=1}^{n-k_{c_i}+1}\{
 x_{c_i,\phi_{i,k}(y)}\}$ has minimal cardinality. Then there exists a subset $S\subset\{1,\cdots, n-k_{c_i}+1\}$,
 such that for all $y\in I$ we have $\bigcup_{k=1}^{n-k_{c_i}+1}\{
 x_{c_i,\phi_{i,k}(y)}\}=\bigcup_{k\in S}\{
 x_{c_i,\phi_{i,k}(y)}\}$, and for all $y\in I\setminus\{y_0\}$ $\car (S)=\car(\bigcup_{k=1}^{n-k_{c_i}+1}\{
 x_{c_i,\phi_{i,k}(y)}\})$.\\

We set $c_0=c_{i_0}$ for some $i_0$. Then for all $e\in F$ and $y\in I$, we have
 \[\RS_k(y,(M_m(y),\nabla_m(y)-e))=\RS_k(y,(\F(y),\nabla_f-e))\]
 for $k\in\{1,\cdots, n_m\}$. Hence, for all $e\in F$ and all $k$, the
 map $\RS_k(.,(M_m(.),\nabla_m(.)-e))$ is continuous on $I$, and
 log-affine on $I$ if $|e-c_0|\leq r_F(z)$. Therefore, for all $y\in
 I$ and for all $e\in F$ we have
 \[\RS_k(M_m(y),\nabla_m(y)-e)=\min(\RS_k(M_m(y),\nabla_m(y)-c_0),\cR_{e-c_0}(y)). \]
Then by setting 
\begin{equation}
\label{eq:85}
 \phi_{0,k}(y)= \begin{cases}

    \frac{|p|^l\omega}{\RS_k(y,(M_m(y),\nabla_{m}(y)-c_0))^{p^l}}& \text{
                               if }
                               \omega^{\fra{p^{l-1}}}\leq
                               \RS_k(y,(M_{m}(y),\nabla_{m}(y)-c_0))\leq
                               \omega^{\fra{p^l}},
                               \text{
                               with }
                               l\in\NN\setminus\{0\},\\
    \frac{\omega}{\RS_k(y,(M_{m}(y),\nabla_{m}(y)-c_0))} & \text{ if }
                          \RS_k(y,(M_{m}(y),\nabla_{m}(y)-c_0))\leq \omega,
   \end{cases}  
\end{equation}
 and Theorem~\ref{sec:spectr-theory-sense-1}, we obtain
 $\Sigma_{\nabla_{m}(y),F}(\Lk{M_{m}(y)})=\bigcup_{k=1}^{n_m}\{
 x_{c_0,\phi_{0,k}(y)}\}+\ZZ_p$, with $\phi_{0,k}$ log-affine on
 $I$, and $\bigcup_{k=1}^{n_m}\{
 x_{c_0,\phi_{0,k}(y)}\}$ has minimal cardinality. Then there exists a subset $S\subset\{1,\cdots, n_m\}$,
 such that for all $y\in I$ we have $\bigcup_{k=1}^{n_m}\{
 x_{c_0,\phi_{0,k}(y)}\}=\bigcup_{k\in S}\{
 x_{c_0,\phi_{0,k}(y)}\}$, and for all $y\in I\setminus \{y_0\}$ we have $\car (S)=\car(\bigcup_{k=1}^{n_m}\{
 x_{c_0,\phi_{0,k}(y)}\})$. By \cite[Corollary~5.47]{Azz24} we obtain
 the result.
 \end{proof}

\begin{Pro}\label{sec:vari-spectr-case-4}
Let $y_0\in (c,x]\cap X$. Let $z_1,\cdots, z_\nu\in \AF\setminus F$,
$a_1,\cdots,a_\mu\in F$, with $(\nu,\mu) \ne (0,0)$, such that
$\Sigma_{\F,c,f}(y_0)=\{z_1,\cdots,z_\nu,a_1,\cdots,a_\mu\}+\ZZ_p$,
and $\{z_1,\cdots,z_\nu,a_1,\cdots, a_\mu\}$ has minimal
cardinality. Then in
$(c,y_0]\cap X$
(resp. $[y_0,x]$ if $y_0\ne x$) there exists an interval neighborhood $I$ of $y_0$ such that
\begin{enumerate}
 
\item for each $z_i$ there exist $c_{i,1},\cdots, c_{i,{n_i}}\in F$
 ($c_{i,j}$ are not necessarily distinct)
 and $\phi_{i,1},\cdots, \phi_{i,{n_i}}:I\to \R+$ piecewise log-affine on $I$
 such that $z_i=x_{c_{i,j}, \phi_{i,j}(y_0)}$,
\item for each $a_i$ there exist
 $\psi_{i,1},\cdots,\psi_{i,m_i}:I\to\R+$ continuous on $I$ and piecewise log-affine on $I\setminus\{y_0\}$ such that
 $\psi_{i,j}(y_0)=0$ (i.e. $a_i=x_{a_i,\psi_{i,j}(y_0)}$),
\item for all $y\in I$ we have
 \[\Sigma_{\F,c,f}(y)=\{x_{c_{1,1},\phi_{1,1}(y)},\cdots,
  x_{c_{\nu,n_\nu},\phi_{\nu,n_\nu}(y)},
  x_{a_1,\psi_{1,1}(y)},\cdots,x_{a_\mu,\psi_{\mu,m_\mu}(y)}\}+\ZZ_p,\]
 \item the set $\{x_{c_{1,1},\phi_{1,1}(y)},\cdots,
  x_{c_{\nu,n_\nu},\phi_{\nu,n_\nu}(y)},
  x_{a_1,\psi_{1,1}(y)},\cdots,x_{a_\mu,\psi_{\mu,m_\mu}(y)}\}$
  has minimal cardinality.
 \end{enumerate}
\end{Pro}

\begin{proof}
By Theorem~\ref{sec:spectr-theory-sense-1}, we have
$\Spe(y_0)=\{z_1(y_0),\cdots, z_{\mu(y_0)}(y_0)\}+\ZZ_p$ with
$\{z_i(y_0)\}+\ZZ_p\cap \{z_j(y_0)\}+\ZZ_p=\emptyset$ if $i\ne j$. We may assume that
$r_F(z_1(y_0))\leq\cdots\leq r_F(z_{\mu(y_0)} (y_0))$. Since $z_i(y_0)$ are
points not of type (4), we can find $b_i\in F$ such that
$|z_i(y_0)-b_i|=r_F(z_i(y_0))$. Then we have
$|z_i(y_0)-b_i|<|z_j(y_0)-b_i|$ for $i<j$. Let $(M_{z_i(y_0)}, \nabla_{z_i(y_0)})$ as in
decomposition in Theorem~\ref{sec:spectr-theory-sense-1}, we set $m(z_i(y_0)):=\dim_{\h{y}}
M_{z_i(y_0)}$.

We prove the statement by induction on
$\mu(y_0)$. For $\mu(y_0)=1$, by Lemma~\ref{sec:vari-spectr-case-2}
the statement holds on $y_0$. Suppose now that the statement holds for
$\mu(y_0)<\mu$. We set $i_1=n-m(z_1(y_0))+1$. Then the index $i_1$ separates the
radii of $(\F,\nabla_f-b_1)$ on $y_0$. Let $U$ be an open neighborhood
of $y_0$ such that, the index $i_1$ separates the
radii of $(\F,\nabla_f-b_1)$ on $U$. By
Theorem~\ref{sec:robb-decomp-spectr-1}, we have
\begin{equation}
 \label{eq:48}
 0\to (\F_{|_U\; \geq i_1},\nabla_f-b_1)\to (\F_{|_U},\nabla_f-b_1)\to
 (\F_{|_U\; < i_1},\nabla_f-b_1)\to 0.
\end{equation}

Let $(\F_1,\nabla_1)$ and $(\F_2,\nabla_2)$ such that,
$(\F_1,\nabla_1-b_1)= (\F_{|_U\; \geq i_1},\nabla_f-b_1)$ and
$(\F_2,\nabla_2-b_1)=(\F_{|_U\; < i_1},\nabla-b_1)$. By construction
we have $\spe{\F_1}(y_0)=\{z_1(y_0)\}+\ZZ_p$ and
$\spe{\F_2}(y_0)=\{z_2(y_0),\cdots, z_\mu(y_0)\}+\ZZ_p$. Since, for
all $y\in (c,x]\cap U$ we have
$\Spe(y)=\spe{\F_1}(y)\cup\spe{\F_2}(y)$ (cf. \cite[Corollary~5.47]{Azz24}). By the induction hypothesis
we obtain the result. 
\end{proof}

\begin{rem}
We may ask whether we can choose the same $c_{i,j}$ in both sides out of
$y_0$ in $(c,x]\cap X$. In general it is not possible; consider, for example, the equation of rank one
 $\dT+\frac{a}{T(c-T)}$ defined over $\disf{0}{1}\setminus\{0,c\}$,
with $y_0=x_{0,|c|}$, $|c|<1$ and $|a|>1$. Indeed, in the branch $(0,x_{0,1}]$,
letting $\epsilon>0$ be as small as necessary, 
for $|c|-\epsilon< r\leq|c|$ we have
$\Sigma_{\F,0,T}(x_{0,r})=\{x_{{a}{c\-1},{|ac^{-2}|r}}\}+\ZZ_p=\{x_{{a}{c\-1},{|ac^{-2}|r}}\}$, and for
$|c|\leq r<|c|+\epsilon$, we have
$\Sigma_{\F,0,T}(x_{0,r})=\{x_{0,|a|r\-1}\}+\ZZ_p=\{x_{0,|a|r\-1}\}$. Note that in this
example even $\RS_1$ is not log-concave.
\end{rem}

\begin{Lem}\label{sec:vari-spectr-case-3}
 Let $I$ be an interval, $a\in F$ and $\phi:I\to \R+$ be a continuous function. Then the
 following map is continuous
 \begin{equation}
  \label{eq:14}
  \Fonction{\Psi :I}{\cK(\AF)}{r}{\{x_{a,\phi(r)}\}+\ZZ_p.}
 \end{equation}
\end{Lem}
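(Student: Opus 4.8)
The plan is to realise $\Psi$ as the direct image of the fixed compact set $\ZZ_p$ under a jointly continuous map, and then to conclude by a tube‑lemma argument together with the explicit description of the exponential topology recalled at the beginning of the section. First I would note that, since $F$ has mixed characteristic, $\ZZ_p$ is a compact subset of $F$, and that for $z\in F$ one has $x_{c,\rho}+z=x_{c+z,\rho}$; hence $\Psi(r)=\{x_{a+z,\phi(r)};\; z\in\ZZ_p\}$. The key point is that the map
\[
\Fonction{G:\ZZ_p\times I}{\AF}{(z,r)}{x_{a+z,\phi(r)}}
\]
is continuous, which I would deduce by composing $(z,r)\mapsto(a+z,\phi(r))$, continuous because $\phi$ is, with the map $(c,\rho)\mapsto x_{c,\rho}$ from $F\times\R+$ to $\AF$; the latter is continuous because a net of points of $\AF$ converges exactly when the associated multiplicative seminorms converge pointwise on $F[T]$, and for a fixed $P\in F[T]$ the quantity $\max_n|P^{(n)}(c)/n!|\,\rho^n$ is a finite maximum of functions that are continuous in $(c,\rho)$. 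In particular each $\Psi(r)=G(\ZZ_p\times\{r\})$ is a continuous image of $\ZZ_p$, hence compact.

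To prove continuity of $\Psi$ at a given $r_0\in I$, I would take a basic open neighbourhood $(U,\{U_i\}_{i\in L})$ of $\Psi(r_0)$ in $\cK(\AF)$ — so $U\subset\AF$ is open, $\{U_i\}_{i\in L}$ is a finite open cover of $U$, $\Psi(r_0)\subset U$, and $\Psi(r_0)\cap U_i\neq\emptyset$ for every $i$ — and produce a neighbourhood $J$ of $r_0$ with $\Psi(r)\in(U,\{U_i\}_{i\in L})$ for all $r\in J$. For the containment part I would apply the tube lemma to the open set $G\-1(U)\subset\ZZ_p\times I$, which contains the compact slice $\ZZ_p\times\{r_0\}$: this yields an open interval $J_0\ni r_0$ with $\ZZ_p\times J_0\subset G\-1(U)$, i.e. $\Psi(r)\subset U$ for all $r\in J_0$. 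For each of the finitely many indices $i\in L$ I would pick $z_i\in\ZZ_p$ with $G(z_i,r_0)\in U_i$ and use the openness of $G\-1(U_i)$ to find an open interval $J_i\ni r_0$ with $\{z_i\}\times J_i\subset G\-1(U_i)$, so that $G(z_i,r)\in\Psi(r)\cap U_i$ for all $r\in J_i$. Then $J:=J_0\cap\bigcap_{i\in L}J_i$ does the job, the intersection being over a finite set.

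The only step requiring genuine care, and thus the main obstacle, is the joint continuity of $(c,\rho)\mapsto x_{c,\rho}$ at $\rho=0$, where $x_{c,0}$ is the rigid point $c$: there one uses the ultrametric inequality to see that in $\max_n|P^{(n)}(c)/n!|\,\rho^n$ the terms with $n\geq 1$ become negligible as $\rho\to 0$, so that this maximum tends to $|P(c)|$. Everything else is a formal manipulation of the exponential topology; alternatively, one can shortcut the whole second paragraph by invoking the general principle that a continuous map between topological spaces induces a continuous map between the associated spaces of non‑empty compact subsets with the exponential topology (cf. \cite[Section~5]{Azz20}), applied to $G$ and to the continuous map $r\mapsto\ZZ_p\times\{r\}$ valued in $\cK(\ZZ_p\times I)$.
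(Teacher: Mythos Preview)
Your proof is correct and takes a genuinely different route from the paper's. The paper argues by a case distinction on the value of $\phi(r_0)$: when $\phi(r_0)=0$ it identifies $\Psi(r_0)=a+\ZZ_p$ and uses as neighbourhood basis the sets $(U_n,\{\diso{a+i}{|p|^n}\}_{0\le i\le p^{n+1}-1})$; when $\phi(r_0)>0$ it further splits according to whether $\phi(r_0)>1$ or $|p|^{n+1}<\phi(r_0)\le|p|^n$, in each case describing $\Psi(r_0)$ explicitly as a finite set of points $x_{a+i,\phi(r_0)}$ and using neighbourhood bases built from open annuli. Your approach bypasses this case analysis entirely by factoring $\Psi$ through the jointly continuous map $G:\ZZ_p\times I\to\AF$ and applying the tube lemma (or, as you note, the functoriality of $\cK(-)$ together with the continuity of $r\mapsto\ZZ_p\times\{r\}$). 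Your argument is more uniform and would apply verbatim with $\ZZ_p$ replaced by any compact subset of $F$; the paper's hands-on argument, on the other hand, makes visible the concrete structure of $\Psi(r)$ as a finite set when $\phi(r)>0$, which is in the spirit of the surrounding computations. The one point you flag as delicate, continuity of $(c,\rho)\mapsto x_{c,\rho}$ at $\rho=0$, is handled correctly.
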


\begin{proof}
Let $r_0\in I$. We distinguish two cases, one when $\phi(r_0)=0$ and 
the other one when $\phi(r_0)>0$.
\begin{itemize}
\item {\bf Case} $\phi(r_0)=0$: In this case the sets of the form
 $(U_n,\{\diso{a+i}{|p|^n}\}_{0\leq i \leq p^{n+1}-1})$, with $n\in \NN$
 and $U_n=\bigcup_{i=0}^{p^{n+1}-1}\diso{a+i}{|p|^n}$, form a
 neighborhood basis of $\Psi(r_0)=a+\ZZ_p$. Since $\phi$ is
 continuous, for all $n\in \NN$, there exists $I_n$ a
 neighborhood of $r_0$ such that, for all $r\in I_n$ we have
 $\phi(r)<|p|^n$. Hence, for all $r\in I_n$ and $0\leq i\leq p^{n+1}-1$ we have
 $x_{a+i,\phi(r)}\in \Psi(r)\cap \diso{a+i}{|p|^n}$. Since
 $a+\ZZ_p\subset U_\epsilon$, we have $\Psi(r)\subset
 U_n$. Therefore, we have $\Psi(r)\in (U_n,
 \{\diso{a+i}{|p|^n}\}_{0\leq i\leq p^{n+1}-1})$.
\item {\bf Case} {$\phi(r_0)>0$}: In this case, if $\phi(r_0)>1$ then $\Psi(r_0)=\{x_{a,\phi(r_0)}\}$. The sets of the form ${\couo{c}{\epsilon_1}{\epsilon_2},\{\couo{c}{\epsilon_1}{\epsilon_2}\})}$, 
 with $1<\epsilon_1<\phi(r_0)<\epsilon_2$ and $|c-a|\leq \phi(r_0)$,
 form a neighborhood basis of $\Psi(r_0)$. Since $\phi$ is
 continuous, for all $1<\epsilon_1<\phi(r_0)<\epsilon_2$, there exits
 $I_{\epsilon_1,\epsilon_2}$ a neighborhood of $r_0$ such that, for
 all $r\in I_{\epsilon_1,\epsilon_2}$ we have 
 $\epsilon_1<\phi(r)<\epsilon_2$. Hence, for all $r\in
 I_{\epsilon_1,\epsilon_2}$ we have $\Psi(r)=\{x_{a,\phi(r)}\}$ and 
 $\Psi(r)\subset\couo{c}{\epsilon_1}{\epsilon_2}$. Suppose now that
 $|p|^{n+1}<\phi(r_0)\leq |p|^{n}$. We have 
 $\Psi(r_0)=\{x_{a,\phi(r_0)},\cdots, x_{a+p^{n+1}-1,\phi(r_0)}\}$. The sets of
 the form
 $(U_{\epsilon_1,\epsilon_2},\{\couo{c_i+i}{\epsilon_1}{\epsilon_2}\}_{0\leq
  i\leq p^{n+1}-1}) $, with
 $U_{\epsilon_1,\epsilon_2}=\bigcup_{i=0}^{p^{n+1}-1}\couo{c_i+i}{\epsilon_1}{\epsilon_2}$,
 $|p|^{n+1}< \epsilon_1<\phi(r_0) <\epsilon_2$
  and $|c_i-a|\leq \phi(r_0)$, form a
 neighborhood basis of $\Psi(r_0)$. Since $\phi$ is
 continuous, for all $|p|^{n+1}<\epsilon_1<\phi(r_0)<\epsilon_2$, there exits
 $I_{\epsilon_1,\epsilon_2}$ a neighborhood of $r_0$ such that, for
 all $r\in I_{\epsilon_1,\epsilon_2}$ we have 
 $\max(\max_i|c_i-a|,\epsilon_1)<\phi(r)<\epsilon_2$. This implies that for all $r\in
 I_{\epsilon_1,\epsilon_2}$, $\Psi(r)=\{x_{a,\phi(r)},\cdots,
 x_{a+p^{n+1}-1,\phi(r)}\}$. Hence, for all $1\leq i\leq p^{n+1}-1$
 we have $\Psi(r)\cap
 \couo{c_i+i}{\epsilon_1}{\epsilon_2}=\{x_{a+i,\phi(r)}\}$ and
 $\Psi(r)\in (U_{\epsilon_1,\epsilon_2},\{\couo{c_i+i}{\epsilon_1}{\epsilon_2}\}_{0\leq
  i\leq p^{n+1}-1}) $. 

\end{itemize}
\end{proof}

\begin{Theo}
 The following map is continuous
 \begin{equation*}
  \Fonction{\Sigma_{\F,c,f}: (c,x]\cap X}{\cK(\AF)}{y}{\Sigma_{\nabla_{d_f},F}(\Lk{\F(y)})}. 
 \end{equation*}
\end{Theo}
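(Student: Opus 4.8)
The plan is to obtain this as an essentially formal consequence of Proposition~\ref{sec:vari-spectr-case-4} and Lemma~\ref{sec:vari-spectr-case-3}. Since continuity is a local property, it is enough to prove that $\Spe$ is continuous at each $y_0\in (c,x]\cap X$. I would fix such a $y_0$ and invoke Proposition~\ref{sec:vari-spectr-case-4}: it yields one-sided intervals $I^-=(\alpha,y_0]$ and $I^+=[y_0,\beta)$ contained in $(c,x]\cap X$, base points $a_1^{\pm},\dots,a_n^{\pm}\in F$, and continuous functions $\phi_i^{\pm}\colon I^{\pm}\to\R+$ such that
\[
  \Spe(y)=\bigcup_{i=1}^n\{x_{a_i^{\pm},\phi_i^{\pm}(y)}\}+\ZZ_p \qquad\text{for every }y\in I^{\pm}.
\]
(If only one of the two intervals is available at $y_0$, for instance when $y_0$ is the endpoint $x$, only that one is used, and the argument below is unchanged.)

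The next step is to note that on each of $I^-$ and $I^+$ the map $\Spe$ is a finite union of continuous maps. Indeed, for each $i$, Lemma~\ref{sec:vari-spectr-case-3} says that $y\mapsto\{x_{a_i^{\pm},\phi_i^{\pm}(y)}\}+\ZZ_p$ is a continuous map $I^{\pm}\to\cK(\AF)$; and since the union map $\cK(\AF)\times\cK(\AF)\to\cK(\AF)$ is continuous for the exponential topology (recalled at the beginning of Section~\ref{sec:cont-spectr-an}), a finite union of continuous $\cK(\AF)$-valued maps is again continuous. Hence the restrictions $\Spe|_{I^-}$ and $\Spe|_{I^+}$ are continuous.

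Finally I would paste the two sides together. The sets $I^-$ and $I^+$ are closed in their union $I^-\cup I^+$, which is a neighbourhood of $y_0$ in $(c,x]\cap X$; the restrictions of $\Spe$ to each are continuous and agree at the single common point $y_0$, where both equal $\Spe(y_0)$. By the gluing lemma for finitely many closed sets, $\Spe|_{I^-\cup I^+}$ is continuous, so $\Spe$ is continuous at $y_0$. As $y_0\in (c,x]\cap X$ was arbitrary, $\Spe$ is continuous, which is the assertion.

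The substantive content is entirely contained in the two ingredients already established: Proposition~\ref{sec:vari-spectr-case-4}, which controls the motion of the centers $a_i^{\pm}$ and radii $\phi_i^{\pm}(y)$ of the spectral discs on each side of $y_0$, and Lemma~\ref{sec:vari-spectr-case-3}, which handles the passage to $+\ZZ_p$ when a radius tends to $0$. The only point requiring any care here, and it is minor, is verifying that the one-sided local descriptions match up to give continuity at $y_0$ itself; this is immediate once one records that $y_0$ lies in both $I^-$ and $I^+$ and that the value there is $\Spe(y_0)$ on both sides.
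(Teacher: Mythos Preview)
Your proof is correct and follows exactly the approach the paper takes: it invokes Proposition~\ref{sec:vari-spectr-case-4} and Lemma~\ref{sec:vari-spectr-case-3} and assembles the result. You have simply spelled out the details the paper leaves implicit (finite unions of continuous $\cK(\AF)$-valued maps are continuous, and the gluing of the one-sided descriptions at $y_0$), which is entirely appropriate.
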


\begin{proof}
 By Proposition~\ref{sec:vari-spectr-case-4} and
 Lemma~\ref{sec:vari-spectr-case-3} we obtain the result.
\end{proof}

\begin{rem}
 Since in each branch we do not choose the same derivation, we proved
 the continuity only on each skeleton of an annulus. It is worth
 specifying that in general the continuity of the spectrum does not hold
 a neighborhood of point, even if we fix the same derivation in the
 whole neighborhood. For example this is not true even for the trivial
 differential equation, see \cite[Theorem~3.20]{Azz24}.
\end{rem}

\begin{cor}\label{sec:vari-spectr-case-5}
 Let $(\F,\nabla)$ be a differential equation over an open disk
 $D$. Suppose that all the radii of convergence are constant
 (respectively identically equal to $R_1,\cdots, R_n$). Then for all
 $x\in D_{[2,3]}$, for all $c\in F$ such that $x=x_{c,r}$, and for all $f$ a coordinate function defined in
an affinoid neighborhood $V$ of $x$ in $D$ (in the
meaning where the induced map $V\to \AF$ by $F[T]\to \cO(V)$,
$T-c\mapsto f$ is a closed immersion with $f(V)=V$), such that for all $y\in
(c,x]\cap V$ we have $f(y)=y$, we have
 \begin{equation}
  \label{eq:51}
   \Fonction{\Sigma_{\F_{|_V},c,f}: (c,x]\cap V}{\cK(\AF)}{y}{\bigcup_{i=1}^n\{x_{0,\phi_i(y)}\}+\ZZ_p,}
  \end{equation}
  with
  \begin{equation}
   \label{eq:52}
   \phi_i(y)=
   \begin{cases}
    0& \text{ if } r_F(y) \leq R_i,\\
    \frac{|p|^l\omega\cdot r_F(y)^{p^l}}{R_i^{p^l}} & \text{ if }
                         \omega^{\fra{p^{l-1}}}\cdot
                         r_F(y)\leq
                         R_i \leq
                         \omega^{\fra{p^{l}}}\cdot
                         r_F(y),
                         \text{ with }
                         l\in\NN\setminus\{0\},\\
    \frac{\omega\cdot r_F(y)}{R_i} & R_i\leq \omega\cdot r_F(y).\\
   \end{cases}
  \end{equation}
\end{cor}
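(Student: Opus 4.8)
After rescaling the coordinate we may assume $D=\diso{0}{1}$, and for $x=x_{0,r}\in D$ we take $b$ to be the branch out of $x$ pointing towards the centre $0$; then $c=0$, $f=T$, $d_f=T\D{T}$, and $(c,x]$ is the set of points $x_{0,s}$ with $0<s\le r$, for which $r_F(x_{0,s})=s$. The plan is to run the proof of Lemma~\ref{sec:vari-spectr-case-2}, and hence of Proposition~\ref{sec:vari-spectr-case-4}, uniformly over $(c,x]$ in the simplest possible configuration.

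There are two ingredients. The first is the shape of the spectral radii: by hypothesis the radii of convergence are constant, equal to $R_1\le\cdots\le R_n$, hence in particular log-affine on every subinterval (Theorem~\ref{sec:radii-conv-line}), so that none of the interval-shrinking in Lemmas~\ref{sec:vari-spectr-case-1} and \ref{sec:vari-spectr-case-2} is needed, and \eqref{eq:6} gives, for every $0<s\le r$,
\[
  \RS_i\bigl(x_{0,s},(\F(x_{0,s}),\nabla_{d_f})\bigr)=\frac{\min(R_i,s)}{s},
\]
which equals $1$ exactly when $s\le R_i$ and equals $R_i/s<1$ otherwise. The second ingredient — the only non-formal point — is that at each $y=x_{0,s}$ the spectrum of $(\F(y),\nabla_{d_f})$ is centred at $0$, so that one is everywhere in the case $m=0$, $a_0=0$ of the proof of Lemma~\ref{sec:vari-spectr-case-2}. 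This is where we use that $\F$ is locally free over the whole disc and $\nabla$ has no singularity inside $D$: writing $\nabla_{d_f}=T\D{T}+TA$ with $A$ the holomorphic matrix of $\nabla$ in the coordinate $T$, the term $TA$ vanishes at $0$, the residue of $\nabla_{d_f}$ at $0$ is trivial, and all Robba exponents lie in $\ZZ_p$; concretely, Theorem~\ref{sec:spectr-theory-sense-1} applied to $D$ (the disc case of \cite{Azz21,Azz22}) gives $\Sigma_{\nabla_{d_f},F}(\Lk{\F(y)})=\bigcup_i\{x_{0,\rho_i(y)}\}+\ZZ_p$ with no nonzero translate occurring. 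Equivalently, for $a\in F$ the twisted module $(\F(y),\nabla_{d_f}-a)$ has its spectral radii built from $\min\bigl(\RS_i(\cdot,\nabla_{d_f}),\cR_a(\cdot)\bigr)$ with $\cR_a$ constant along $b$ (Remark~\ref{sec:vari-spectr-case}), so, the centres being at $0$, no nonzero $a$ can meet the conditions of Lemma~\ref{sec:vari-spectr-case-1} and $m=0$.

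Granting this, the statement is a direct substitution. Inserting $\RS_i(x_{0,s},(\F(x_{0,s}),\nabla_{d_f}))=\min(R_i,s)/s$ into the definition \eqref{eq:38} of $\phi_i$ and multiplying through by $s=r_F(y)$ (using $\omega<1$), the three alternatives $\RS_i=1$; $\omega^{\fra{p^{l-1}}}\le\RS_i\le\omega^{\fra{p^{l}}}$; $\RS_i\le\omega$ turn into $r_F(y)\le R_i$; $\omega^{\fra{p^{l-1}}}r_F(y)\le R_i\le\omega^{\fra{p^{l}}}r_F(y)$; $R_i\le\omega r_F(y)$, with corresponding values $0$, $|p|^l\omega\,r_F(y)^{p^l}/R_i^{p^l}$, $\omega\,r_F(y)/R_i$, which is exactly \eqref{eq:52}. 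Since $a_0=0$ this yields $\Sigma_{\F,b,0}(x_{0,s})=\bigcup_{i=1}^{n}\{x_{0,\phi_i(x_{0,s})}\}+\ZZ_p$ for all $0<s\le r$, proving the corollary. The main obstacle is the middle step — controlling the exponent of a differential equation over a genuine open disc; the rest is bookkeeping from the preceding lemmas and the elementary comparison of \eqref{eq:38} with \eqref{eq:52}.
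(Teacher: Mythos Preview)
Your proof is correct and follows essentially the approach implicit in the paper: invoke Theorem~\ref{sec:spectr-theory-sense-1} (specifically its disc case from \cite{Azz21,Azz22}) to pin the centres of the spectrum at $0$, then feed the constant radii $\RS_i(x_{0,s},\nabla_{d_f})=\min(R_i,s)/s$ into formula~\eqref{eq:38} from the $m=0$ case of Lemma~\ref{sec:vari-spectr-case-2} with $a_0=0$, obtaining~\eqref{eq:52} by the substitution you carry out. One small point: the ``equivalently'' sentence claiming $\RS_i(\cdot,\nabla_{d_f}-a)$ is built from $\min(\RS_i(\cdot,\nabla_{d_f}),\cR_a(\cdot))$ is a \emph{consequence} of the spectrum being centred at $0$ rather than an independent justification for it, so it does not strengthen the argument; the actual work is done by your appeal to the disc case together with the observation that $\nabla_{d_f}=T\tfrac{d}{dT}+TA$ has trivial residue at the origin, which is the right heuristic.
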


\subsection{Approximating the spectrum, by
 approximating the connection}
In this part we prove that we can approximate the spectrum of a given
connection, by approximating this connection. 
\begin{Lem}\label{sec:link-betw-spectr-3}
  Let $\Omega\in E(F)$, let $d:\Omega\to \Omega$ be a bounded $F$-linear derivation, such that $d\ne 0$. Let $\{(M,\nabla_l)\}_{l\in\NN}$ (resp. $(M,\nabla)$) be
  a family of differential modules (resp. differential module) over
  $(\Omega,d)$, such that $\nabla_l\to\nabla$ in $\Lk{M}$. Then there exists a cyclic vector $m$ of $(M,\nabla)$
  for which we can construct a sequence of vectors $(m_l)_{l\in \NN}$
  such that, $m_l$ is a cyclic vector of $(M,\nabla_l)$ and
  $m_l\overset{l\to \infty}{\to} m$. 
 \end{Lem}

 \begin{proof}
  Let $e_1,\dots, e_n$ be a basis of $M$ as an $\Omega$-vector space.
  Recall that for $f\in \Omega$, $m$ is a cyclic vector of $(M,\nabla)$ as a
  differential module over $(\Omega,d)$, if and only if $m$ is a cyclic vector of $(M,f\nabla)$ as a differential module
  over $(\Omega,f.d)$. Let $f\in \Omega$ such that $df\ne 0$. We
  construct $m$ (resp. $m_l$) as follows. For $i,j\geq 0$, we set
  \begin{equation}
   \label{eq:98}
   c(i,j)=\sum_{k=0}^j(-1)^k\binom{j}{k}(\fra{df}\nabla)^k(e_{i+j+1-k}),
   \quad c_l(i,j)=\sum_{k=0}^j(-1)^k\binom{j}{k}(\fra{df}\nabla_l)^k(e_{i+j+1-k}),
  \end{equation}
  and
  \begin{equation}
   \label{eq:99}
   c_i(t)=\sum_{j=0}^{n-1}\frac{t^j}{j!}c(i,j), \quad c_{i,l}(t)=\sum_{j=0}^{n-1}\frac{t^j}{j!}c_l(i,j).
  \end{equation}
  
  Let $P(t)=\det(c_0(t),\cdots,c_{n-1}(t))$ and
  $P_l(t)=\det(c_{0,l}(t),\cdots,c_{n-1,l}(t))$ in the basis
  $e_1,\cdots,e_n$ of $M\ot_{\Omega}\Omega [t]$. Let
  $\{z_1,\cdots,z_{n(n-1)}\}$
  (resp. $\{z_{1,l},\cdots,z_{n(n-1),l}\}$) be the multiset of roots
  of $P(t)$ (resp. $P_l(t)$).

  Since $\nabla_l\to
  \nabla$, for $P(t)=\sum_{s=0}^{n(n-1)}\alpha_s t^s$ and
  $P(t)=\sum_{s=0}^{n(n-1)}\alpha_{s,l} t^s$, we have $\alpha_{s,l}\to
  \alpha_s$ for all
  $s\in\{0,\cdots,n(n-1)\}$. Hence, we have (cf. Corollary~\ref{sec:continuity-results-4}) 
  \begin{equation}
   \label{eq:100}
  \forall \epsilon>0;\, \exists \ell\in\NN;\, \forall l\geq \ell\text{
   we have } \{z_{1,l},\cdots,z_{n(n-1),l}\}\subset\bigcup_{s=0}^{n(n-1)}\disco{\wac}{z_s}{\epsilon}. 
 \end{equation}

 Since $F$ is of characteristic zero, we can choose $a\in F$ such that
 $P(f-a)\ne 0$. Let $\epsilon_0=\min_s|z_s-(f-a)|$. By \eqref{eq:100},
 for $\epsilon_0$
 there exists $l_0\in \NN$ such that $\forall l\geq l_0$ we have
 $\{z_{1,l},\cdots,z_{n(n-1),l}\}\subset\bigcup_{s=0}^{n(n-1)}\disco{\wac}{z_s}{\epsilon_0}$. Therefore, 
 $f-a\not\in\{z_{1,l},\cdots,z_{n(n-1),l}\}$ for all $\forall l\geq l_0$. Hence we have
 $P_l(f-a)\ne 0$ for all $l\geq l_0$.

 Since
 \begin{equation}
  \label{eq:18}
  \nabla^i(c_0(f-a))=c_i(f-a), \quad \nabla_l^i(c_{0,l}(f-a))=c_{i,l}(f-a)
 \end{equation}
(see proofs of \cite[Theorems~5.7.2, 5.7.3]{Ked}), $c_0(f-a)$ is a
cyclic vector of $(M,\fra{df}\nabla)$, and $c_{0,l}(f-a)$ is a cyclic
vector of $(M,\fra{df}\nabla_l)$ for all $l\geq l_0$. We set
$m=c_0(f-a)$, $m_l=c_{0,l}(f-a)$ for $l\geq l_0$, and for $l< l_0$ we
set $m_l$ to be a cyclic vector of $(M,\fra{df}\nabla_l)$. 
  The vector $m$ (resp. $m_l$) is a cyclic vector of $(M,\fra{df}\nabla)$
  (resp. $(M,\fra{df}\nabla_l)$), hence a cyclic
  vector of $(M,\nabla)$ (resp. $(M,\nabla_l)$). Since $\nabla_l\to
  \nabla$, by construction we have $m_l\to m$.
 \end{proof}

Let $X$ be an affinoid domain of $\AF$ and $x=x_{c,r}\in X_{[2,3]}$. For $f\in
\Hx$, such that $f^{\sharp}:\Hx\to \Hx$, $T-c\mapsto f$ is a well defined isomorphism in the category $E(F)$, we set $d_f:=f\D{f}$.

 \begin{Theo}\label{sec:link-betw-spectr-2}
Let $(M,\nabla)$ be a
 differential module over $(\Hx,d_f)$, and $\{(M,\nabla_l)\}_{l\in
  \NN}$ be a family of differential modules over $(\Hx,d_f)$, such that $\nabla_l\to
 \nabla$ in $\Lk{M}$. Let
 $z_1,\cdots,z_\nu$ be points of type (2) or (3) and $a_1,\cdots,
 a_\mu\in F$ such that,
 \begin{equation}
  \label{eq:34}
  \Sigma_{\nabla,F}(\Lk{M})=\{z_1,\cdots,z_\nu,a_1,\cdots, a_\mu\}+\ZZ_p,
 \end{equation}
 with $\{z_1,\cdots,z_\nu,a_1,\cdots, a_\mu\}$ has minimal
 cardinal, and $(\nu,\mu)$ is not equal to $(0,0)$. Then there exists $l_0\in \NN$
 such that for all $l\geq l_0$ we have
 \begin{equation}
  \label{eq:42}
  (M,\nabla_l)=\bigoplus_{i=1}^\nu(M_{l,z_i},\nabla_{l,z_i})\oplus \bigoplus_{k=1}^\mu(M_{l,a_k},\nabla_{l,a_k}),
 \end{equation}
 where
 \begin{enumerate}
 \item $\dim_{\Hx} M_{l,z_i}=\dim_{\Hx} M_{z_i}$ and $\dim_{\Hx} M_{l,a_k}=\dim_{\Hx}
  M_{a_k}$;
 \item $\Sigma_{\nabla_{l,z_i},F}(\Lk{M_{l,z_i}})=\{z_i\}+\ZZ_p$;
 \item $\forall \epsilon>0$, $\exists l_\epsilon\geq l_0$; $\forall
  l\geq l_\epsilon\implies
  \Sigma_{\nabla_{l,a_k},F}(\Lk{M_{l,a_k}})\subset\bigcup_{j\in \NN}\diso{a_k+j}{\epsilon}$. 
 \end{enumerate}
 \end{Theo}
 
 \begin{proof}
 Let $\phi: \AF\to \AF$, $T-c\mapsto (T-c)^p$. We set
 $(M_k,\nabla_k)=(\phi^k\circ f)_*(M,\nabla)$ and
 $(M_k,\nabla_{k,l})=(\phi^k\circ f)
 _*(M,\nabla_l)$. Let $\mathscr{B}_k=\{e^k_1,\cdots, e^k_{p^kn}\}$ be
 a basis as $\Hx$-vector space of $M_k$. Let $m_k$ (resp. $m_{k,l}$)
 be a cyclic vector as in Lemma~\ref{sec:link-betw-spectr-3} of
 $(M_k,\nabla_k)$ (resp. $(M_k,\nabla_{k,l})$). Let $U_k$
 (resp. $U_{k,l}$) be the matrix change from $\mathscr{B}_k$ to
 $\{m_k,\nabla_k(m_k),\cdots, \nabla_k^{p^kn-1}(m_k)\}$
 (resp. $\{m_{k,l},\nabla_{k,l}(m_{k,l}),\cdots,
 \nabla_{k,l}^{p^kn-1}(m_{k,l})\}$). Hence, by Lemma~\ref{sec:link-betw-spectr-3} we have
 $U_{k,l}\overset{l\to\infty}{\to} U_k$
 (resp. $U_{k,l}^{-1}\overset{l\to \infty}{\to} U_k^{-1}$). Since
 $\nabla_l\to \nabla$, then $\nabla_{k,l}\to \nabla_k$ (they are the
 same as bounded $F$-linear operators). Therefore, we have
 $U^{-1}_{k,l}\nabla_{k,l}U_{k,l}\to U_k^{-1}\nabla_k U_k$. This means
 that if $P_k (p^k(T-c)^p\dT)$ (resp. $P_{k,l}(p^k(T-c)\dT)$) is the differential polynomial
 associated to $m_k$ (resp. $m_{k,l}$), then the coefficients of
 $P_{k,l}(S)$ converge to the coefficients of $P_k(S)$. Let $n_i=\dim_{\Hx}
 M_{a_i}$. Let $k_0\in \NN$ such that,
 \begin{itemize}
 \item $\RS_{n-n_i}(x,(M,\nabla-a_i))<\omega^{\frac{1}{p^{k_0}}}$ for
  all $i\in \{1,\cdots,\mu\}$;
 \item $r_F(z_i)> |p|^{k_0}$ for all $i\in\{1,\cdots,\nu\}$;
 \item $\delta(a_i)>|p|^{k_0}$ for all $i\in\{1,\cdots,\mu\}$ with $a_i\ne 0$;
 \item $\delta(a_i-a_j)>|p|^{k_0}$ for all $i,j\in\{1,\cdots,\mu\}$ with
  $i\ne j$. 
 \end{itemize}
For $z\in \AF$, we set
$\cD(z)=\pi^{-1}_{\widehat{\Hx^{alg}}/F}(z)\setminus
\{z_{\widehat{\Hx^{alg}}}\}$
(cf. Definition~\ref{sec:shape-quasi-smooth-3}). The polynomial $P_{k_0}(S)$ admits $(\dim_{\Hx}
 M_{z_i})p^{k_0}$ roots counted with multiplicity (as a commutative
 polynomial) in $\bigcup_{j=0}^{p^{k_0}-1}\cD(z_i+j)$
 (cf. \cite[Theorem~4.21]{Azz24}). Since $P_{k_0,l}(S)\to P_{k_0}(S)$, there
 exists $l_1\in\NN$ such that, for all $l\geq l_1$ the polynomial $P_{k_0,l}(S)$ admits $(\dim_{\Hx}
 M_{z_i})p^{k_0}$ roots counted with multiplicity (as a commutative
 polynomial) in $\bigcup_{j=0}^{p^{k_0}-1}\cD(z_i+j)$. Hence, for all
 $l\geq l_1$ we have $\{z_1,\cdots,z_\nu\}+\ZZ_p\subset
 \Sigma_{\nabla_l,F}(\Lk{M})$. Moreover, if $(M_{l,z_i},\nabla_{l,z_i})$ is the
 sub-differential module corresponding to the element of the spectrum
 $z_i$, then $\dim_{\Hx}M_{l,z_{i}}=\dim_{\Hx}M_{z_i}$. On the other hand,
 $P_k(S-a_i)$ admits $n_ip^{k_0}$ roots with absolute value equal or less then
 $|p|^{k_0}$. Hence, there exists $l_{k_0}\geq l_1$ such that for all
 $l\geq l_{k_0}$ the polynomial $P_{k,l}(S-a_i)$ admits $n_ip^{k_0}$ roots
 with absolute value equal or less then
 $|p|^{k_0}$, for all $i\in \{1,\cdots,\mu\}$. Hence, for all $l\geq l_{k_0}$ we have
 $\RS_{n-n_i+1}(x,(M,\nabla_l-a_i))\geq\omega^{\fra{p^{k_0}}}$, and $\RS_{n-n_i}(x,(M,\nabla_l-a_i))<\omega^{\fra{p^{k_0}}}$. Therefore,
 for all $l\geq l_{k_0}$ and for all $i\in\{1,\cdots,\mu\}$ we have the decomposition
 \begin{equation}
  \label{eq:47}
  (M,\nabla_l)=(M_{l,i},\nabla_{l,i})\oplus (M_{l,a_i},\nabla_{l,a_i}),
 \end{equation}
 with $\dim_{\Hx}M_{l,a_i}=n_i$,
 $\RS_1(x,(M_{l,a_i},\nabla_{l,a_i}-a_i))\geq\omega^{\fra{p^{k_0}}}$
 and $\RS_{n-n_i}(x,(M_{l,i},\nabla_{l,i}-a_i))<\omega^{\fra{p^{k_0}}}$. We
 need to prove now that there exists $l_0\geq l_{k_0}$ such that, for
 all $i\ne j$ and $l\geq l_0$ we have $(M_{l,a_i},\nabla_{l,a_i})\cap
 (M_{l,a_j},\nabla_{l,a_j})=\{0\}$ . Let
 $(U,\{U_{z_1},\cdots,U_{z_\nu},U_{a_1},\cdots,U_{a_\mu}\})$ be an open
 neighborhood of $\Sigma_{\nabla,F}(\Lk{M})$ (cf. \eqref{eq:1}) such that,
 \begin{itemize}
 \item $U_{z_i}$ (resp. $U_{a_i}$) is an open neighborhood of
  $\{z_i\}+\ZZ_p$ (resp.$\{a_i\}+\ZZ_p$);
 \item $U_{z_i}\cap U_{z_j}=\emptyset$ for all
  $i,j\in\{1,\cdots,\nu\}$ with $i\ne j$;
  \item $U_{z_i}\cap U_{a_j}=\emptyset$ for all
  $i\in\{1,\cdots,\nu\}$ and $j\in\{1,\cdots,\mu\}$;
 \item $U_{a_i}=\bigcup_{j\in\NN}\diso{a_i+j}{|p|^{k_0}}$ for all $i\in\{1,\cdots,\mu\}$.
 \end{itemize}
 By the previous hypothesis we have $U_{a_i}\cap U_{a_j}=\emptyset$
 for all $i,j\in\{1,\cdots,\mu\}$ with $i\ne j$. Since by
 Theorem~\ref{sec:continuity-results-3} we have $\Sigma_{\nabla_l,F}(\Lk{M})\to\Sigma_{\nabla,F}(\Lk{M})$, there exists $l_0\geq
 l_{k_0}$ such that, for all $l\geq l_0$ we have
 $\Sigma_{\nabla_l,F}(\Lk{M})\in
 (U,\{U_{z_1},\cdots,U_{z_\nu},U_{a_1},\cdots,U_{a_\mu}\})$. Hence, by
 the previous hypothesis we
 should have $\Sigma_{\nabla_{l,a_i},F}(\Lk{M_{l,a_i}})\subset U_{a_i}$, which means
 $\Sigma_{\nabla_{l,a_i},F}(\Lk{M_{l,a_i}})\cap\Sigma_{\nabla_{l,a_j},F}(\Lk{M_{l,a_j}})=\emptyset$
 for $i\ne j$. Therefore, for all $l\geq l_0$ we have $(M_{l,a_i},\nabla_{l,a_i})\cap
 (M_{l,a_j},\nabla_{l,a_j})=\{0\}$. It is obvious that $(M_{l,z_i},\nabla_{l,z_i})\cap
 (M_{l,a_j},\nabla_{l,a_j})=\{0\}$ for all $i\in\{1,\cdots,\nu\}$ and $j\in\{1,\cdots,\mu\}$, and $(M_{l,z_i},\nabla_{l,z_i})\cap
 (M_{l,z_j},\nabla_{l,z_j})=\{0\}$ for all
  $i,j\in\{1,\cdots,\nu\}$ with $i\ne j$. Since $\sum_{i=1}^{\nu}
 \dim_{\Hx}M_{l,z_i}+\sum_{i=1}^{\mu}\dim_{\Hx}M_{l,a_i}=n$, we have
 \begin{equation}
  \label{eq:49}
  (M,\nabla_l)=\bigoplus_{i=1}^\nu(M_{l,z_i},\nabla_{l,z_i})\oplus \bigoplus_{k=1}^\mu(M_{l,a_k},\nabla_{l,a_k}).
 \end{equation}
 The fact that $\forall \epsilon>0$, $\exists l_\epsilon\geq l_0$
 such that $\forall
  l\geq l_\epsilon$ implies $\Sigma_{\nabla_{l,a_k},F}(\Lk{M_{l,a_k}})\subset\bigcup_{j\in
   \NN}\diso{a_i+j}{\epsilon}$, can be deduced directly from the
  fact that $\Sigma_{\nabla_l,F}(\Lk{M})\to \Sigma_{\nabla,F}(\Lk{M})$.
\end{proof}

\subsection{The link between the spectrum and the spectral radii of convergence, the case of a
 quasi-smooth curve} We consider now a general quasi-smooth connected
curve $X$ and a weak triangulation $\cS$ on $X$. The goal of this section is to provide a link between the
spectrum and the radii of convergence, as for the case of an affinoid
domain of $\AF$.

We know that if $x$ is a point of type (2) and lying
 in the interior of $X$, then there exists a one to one correspondence between rational points of the residual curve $\mathscr{C}_x$ and the
 elements $b\in T_xX$ (cf. \cite[Théorème~4.2.10]{Duc}). Since the
 spectrum is intrinsic to the differential equation restricted
 to the generic disk of the point $x$, we should be able to choose a
 derivation for each element of $\mathscr{C}_x$, for which the spectrum
 satisfy the same property as for the case of affinoid domains of
 $\AF$. Indeed, let $x\in X_{[2,3]}$, if $x$ is a point of type (2),
 we can fined a quasi-smooth algebraic curve $\mathscr{C}$, such that there exists an
 affinoid neighborhood of $x$ in $X$ isomorphic to an affinoid
 domain of $\mathscr{C}$, and $x$ lies in the interior of
 $\mathscr{C}$ (see the proof of \cite[Theorem~3.12]{np2}). Let $b\in
 T_x\mathscr{C}$. We can
 choose an affinoid neighborhood $Y_b$ of $x$, in $\mathscr{C}$, such that there exists an étale morphism $\Psi_b:Y_b\to W_b$ as in
 Theorem~\ref{sec:spectr-radii-diff} with $W_b\subset\AF$,
 $\gcd({\rm deg}(\Psi_b),p)=1$ and $\Psi_b\-1(\Psi_b(b))=\{b\}$. If $x$ is a point
 of type (3), since $x$ admits an
 affinoid neighborhood isomorphic to an affinoid domain of $\AF$, we
 just choose $Y\subset\AF$ such that $x$ is in the interior of
 $Y$. In this case, for each $b\in T_xY$, we set $\Psi_b:Y\to Y$ to be the identity.

We define $d_b$ as
 follows: 
Let $C_b$ (resp. $C_{\Psi(b)}$) be an open annulus and a section of
$b$ (resp. $\Psi_b(b)$). Let $c_b\in F$ such that
$C_{\Psi(b)}=\couo{c_b}{r_1}{r_2}$. Let $f_b=\Psi^{\#}_b(T-c_b)$, then
we consider
\begin{equation}
 \label{eq:17}
 d_b={\rm deg}(\Psi_b)\cdot f_b\cdot\Psi_b^*(\dT).
\end{equation}
By construction $d_b$ is a bounded derivation well defined on
$\cO_X(Y_b)$, which extends without any problem to $\h{y}$ for all $y\in Y_b$. 
In the following theorem, we establish the link
 between the spectrum and the spectral radii when we choose $d_b$ as derivation.   
\begin{Theo}\label{sec:link-betw-spectr}
Let $(M,\nabla)$ be a differential module $(\Hx, d_b)$. Let
$\phi_b:\AF\to \AF$, $T-c_b\mapsto (T-c_b)^p$, and we set $y^{p^l}$ to
be $\phi^l_b(y)$. We set $N=\deg(\Psi_b)$. We use notations of
\eqref{eq:93} and \eqref{eq:94}.
\begin{enumerate}

\item There exist $z_1,\cdots,z_{\nu}\in \AF\setminus F$ and
 $a_1,\cdots, a_\mu\in F$, such that
 \begin{equation}
  \Sigma_{\nabla,F}(\Lk{M})=\{z_1,\cdots,z_\nu, a_1,\cdots,
  a_\mu\}+\ZZ_p,
 \end{equation}
 where $z_i$ has the same type as $x$, and $(\nu,\mu)$ is not
 equal to $(0,0)$. 
\item We can choose $z_i$ and $a_j$ so that the set $\{z_1,\cdots,z_\nu, a_1,\cdots,
  a_\mu\}$ has minimal cardinality. Indeed it is enough to keep only
  $z_i$ and $a_j$ for which we have $\{z_i\}+\ZZ_p\cap
  \{z_{i'}\}+\ZZ_p=\emptyset$ and $\{a_j\}+\ZZ_p\cap \{a_{j'}\}+\ZZ_p=\emptyset$
  for $i\ne i'$ and $j\ne j'$.
 
 \item We choose $\{z_1,\cdots,z_\nu, a_1,\cdots,
  a_\mu\}$ to be minimal. Then we have a unique (up to an isomorphism) decomposition:
  \begin{equation}\label{eq:58}
   (M,\nabla)=\bigoplus_{i=1}^{\nu}(M_{z_i},\nabla_{z_i})\oplus \bigoplus_{j=1}^{\mu}(M_{a_j},\nabla_{a_j}),
  \end{equation}
  where $(M_{z_i},\nabla_{z_i})$ and $(M_{a_j},\nabla_{a_j})$ are
  differential modules over $(\Hx,d_b)$, such that $\Sigma_{\nabla_{z_i},F}(\Lk{M_{z_i}})=\{z_i\}+\ZZ_p$
  and $\Sigma_{\nabla_{a_j},F}(\Lk{M_{a_j}})=\{a_j\}+\ZZ_p$.
 \item For all $a\in F$, the differential module
  $(M_{z_i},\nabla_{z_i}-a)$ (resp. $(M_{a_j},\nabla_{a_j}-a)$) is
  pure, i.e all its spectral radii coincide with each other.
  \item Let $c_i\in F$ and $r_i>0$ be such that $z_i=x_{c_i,r_i}$. If
  $|p|^{l}\leq r_i< |p|^{l-1}$ with $l\in \NN\setminus\{0\}$, then $\car(\{z_i\}+\ZZ_p)=p^l$ and $\{z_i\}+\ZZ_p=\{x_{c_i,r_i},
  x_{c_i+1,r_i},\cdots, x_{c_i+p^l-1,r_i}\}$. If
  $r_i\geq1$, we have $\car(\{z_i\}+\ZZ_p)=1$ and
  $\{z_i\}+\ZZ_p=\{x_{c_i,r_i}\}$.
 \item If $r_i>1$, let $P_{z_i}(N(T-c_b)\dT)$ be a differential polynomial
  associated to $(\Psi_b)_*(M_{z_i},\nabla_{z_i})$ (as a differential
  module over $(\h{\Psi_b(x)},N(T-c_b)\dT)$). Then the image by
  $\pi_{\widehat{\h{\Psi_b(x)}^{alg}}/F}$ of all roots of $P_{z_i}(S)$ (as a
  commutative polynomial) is equal to $z_i$. 
 \item If $|p|^l<r_i\leq|p|^{l-1}$, let $P_{z_i}(Np^l(T-c_b)\dT)$ be a differential
  polynomial associated to $(\phi_b^l\circ\Psi_b)_*(M_{z_i},\nabla_{z_i})$ (as a differential
  module over $(\h{\Psi_b(x)^{p^l}},Np^l(T-c_b)\dT)$). Then the image by
  $\pi_{\widehat{\h{\Psi_b(x)^{p^l}}^{alg}}/F}$ of all roots of $P_{z_i}(S)$ (as a
  commutative polynomial) is equal to \[\{x_{c_i,r_i},
   x_{c_i+1,r_i},\cdots, x_{c_i+p^l-1,r_i}\}.\]
  In the special case
  where $r_i=|p|^{l-1}$ we have \[\{x_{c_i,r_i},
  x_{c_i+1,r_i},\cdots, x_{c_i+p^l-1,r_i}\}=\{x_{c_i,r_i},
  x_{c_i+1,r_i},\cdots, x_{c_i+p^{l-1}-1,r_i}\}.\]

 \item If $r_i\geq 1$, for all $a\in \disf{c_i}{r_i}\cap F$ we have
  \begin{equation}
   \RS_1(x,(M_{z_i},\nabla_{z_i}-a))=\frac{\omega}{r_i}
  \end{equation}
  and for all $a\in F\setminus \disf{c_i}{r_i}$
  \begin{equation}
   \RS_1(x,(M_{z_i},\nabla_{z_i}-a))=\cR(\delta({a-c_i}))=\frac{\omega}{|a-c_i|}.
  \end{equation}
 \item If $|p|^l\leq r_i<|p|^{l-1}$, for all $a\in
  \bigcup_{j=0}^{p^l-1} \disf{c_i+j}{r_i}\cap F$ we have
  \begin{equation}
   \RS_1(x,(M_{z_i},\nabla_{z_i}-a))=\left(\frac{|p|^l\omega}{r_i}\right)^{\fra{p^l}}
  \end{equation}
  and for all $a\in F\setminus \bigcup_{j=0}^{p^l-1} \disf{c_i+j}{r_i}$
  \begin{equation}
   \RS_1(x,(M_{z_i},\nabla_{z_i}-a))=\cR(\delta({a-c_i})).
  \end{equation}
 \item For all $a\in
  \{a_j\}+\ZZ_p$, $(M_{a_j},\nabla_{a_j}-a)$ is solvable (all its
  spectral radii are maximal), and for all $a\in
  F\setminus \{a_j\}+\ZZ_p$, we have $\RS_1(x,(M_{a_j},\nabla_{a_j}-a))=\cR(\delta({a-a_j}))$. 
\end{enumerate}
\end{Theo}

\begin{cor}\label{sec:link-betw-spectr-6}
 Let $(M,\nabla_M)$ and $(N,\nabla_N)$ be differential modules over
 $(\Hx,d_b)$. Let
 $(M,\nabla_M)=\bigoplus_{i=1}^\nu(M_{z_i},\nabla_{M,z_i})$
 (resp. $(N,\nabla_N)=\bigoplus_{j=1}^\mu(N_\omega,\nabla_{N,\omega_j})$)
 be a decomposition with respect to the spectrum as in
 \eqref{eq:58}. Then for $\{z_i\}+\ZZ_p\cap
 \{\omega_j\}+\ZZ_p=\emptyset$, any morphism $\phi: M_{z_i}\to N_{\omega_j}$ of 
 differential modules over $(\Hx,d_b)$ is equal to $0$.
\end{cor}

We start by proving the link between the spectrum and the spectral radii
of convergence, but for the special case where $b\in T_xX$
and for differential module of the form $(\F(x),\nabla_{d_b})$, where
$(\F,\nabla)$ is a differential equation over $X$.

 \begin{Pro}\label{sec:link-betw-spectr-1}
 Let $x\in X_{[2,3]}$. Let $(\F,\nabla)$ be a differential equation over
 $X$. Then there exist $z_1,\cdots,
 z_\mu\in\AF$, with $(\{z_i\}+\ZZ_p)\cap
 (\{z_j\}+\ZZ_p)=\emptyset$ for $i\ne j$, such that:
 
 \begin{equation}\label{eq:56}
   \Sigma_{\nabla_{d_b},F}(\Lk{\F(x)})= \{z_1,\cdots,
   z_\mu\}+\ZZ_p.
  \end{equation}
 For
  each $i\in \{1,\cdots, n\}$ there exists
  $z_j\in\{z_1,\cdots,z_\mu\}$ such that
  \begin{equation}\label{eq:57}
 \delta(T(z_j)-a)=
   \begin{cases}
    0& \text{ if } \RS_i(x,(\F(x),\nabla_{d_b}-a))=1,\\
    \frac{p^l\omega}{\RS_i(x,(\F(x),\nabla_{d_b}-a))^{p^l}}& \text{ if }
                             \omega^{\fra{p^{l-1}}}\leq
                             \RS_i(x,(\F(x),\nabla_{d_b}-a))
                             \leq
                                \omega^{\fra{p^{l}}},\\
    \frac{\omega}{\RS_i(x,(\F(x),\nabla_{d_b}-a))}&
                            \RS_i(x,(\F(x),\nabla_{d_b}-a))\leq \omega.
   \end{cases}
  \end{equation}
 \end{Pro}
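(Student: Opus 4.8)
The plan is to transport everything to the affine line via the finite étale morphism $\Psi_b:Y_b\to W_b$ used in the construction \eqref{eq:17} of $d_b$, and then to quote Theorem~\ref{sec:spectr-theory-sense-1}. Write $N:={\rm deg}(\Psi_b)$ (prime to $p$, and $N=1$ when $x$ is of type (3)), and let $\cG:=\Psi_{b*}(\F,\nabla)$ be the direct image of $(\F,\nabla)$ along $\Psi_b$; it is a differential equation over the analytic domain $W_b\subset\AF$, of rank $nN$. Since $\Psi_b$ is finite étale and $\Psi_b^{-1}(\Psi_b(x))=\{x\}$, the completed residue field $\h{\Psi_b(x)}$ embeds in $\Hx$ as a finite separable extension of degree $N$, one has $\Hx=\cO_{X,x}\ct_{\cO_{W_b,\Psi_b(x)}}\h{\Psi_b(x)}$, and the stalk of $\cG$ at $\Psi_b(x)$ is $\F_x$ regarded as an $\cO_{W_b,\Psi_b(x)}$-module; passing to completed residue fields yields a canonical identification $\cG(\Psi_b(x))=\F(x)$ of $\h{\Psi_b(x)}$-differential modules. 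Moreover, by \eqref{eq:17}, $d_b=N\cdot\Psi_b^*(d_{T-c_b})$ with $d_{T-c_b}:=(T-c_b)\dT$ of the form $f\D f$, so under this identification the connection attached to $\cG$ and the derivation $d_{T-c_b}$ at $\Psi_b(x)$ is exactly $\tfrac1N\nabla_{d_b}$.

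First I would obtain \eqref{eq:56}. The $F$-Banach algebra $\Lk{\F(x)}$ of bounded $F$-linear operators and the element $\tfrac1N\nabla_{d_b}$ are insensitive to the change of base field from $\Hx$ to $\h{\Psi_b(x)}$, and multiplying a Banach-algebra element by $N\in F^\times$ multiplies its spectrum by $N$; hence
\[\Sigma_{\nabla_{d_b},F}(\Lk{\F(x)})=N\cdot\Sigma_{\nabla_{d_{T-c_b}},F}\big(\Lk{\cG(\Psi_b(x))}\big).\]
Now $\Psi_b(x)$ is of type (2) or (3) in $W_b\subset\AF$ and $c_b\in F$ is the center of the annular section $C_{\Psi(b)}=\couo{c_b}{r_1}{r_2}$ of the branch $\Psi_b(b)$, so Theorem~\ref{sec:spectr-theory-sense-1} applies to $(\cG,\nabla)$ at $\Psi_b(x)$ with the derivation $d_{T-c_b}$ and produces $w_1,\dots,w_\mu\in\AF$ with pairwise disjoint $\ZZ_p$-orbits, an equality $\Sigma_{\nabla_{d_{T-c_b}},F}(\Lk{\cG(\Psi_b(x))})=\{w_1,\dots,w_\mu\}+\ZZ_p$, a decomposition $\bigoplus_j(M_{w_j},\nabla_{w_j})$ whose pieces have single-orbit spectra, and the formula for $\RS_1(\Psi_b(x),(M_{w_j},\nabla_{w_j}-a'))$ in terms of $\delta(T(w_j)-a')$. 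Since $|N|=1$ and $N\ZZ_p=\ZZ_p$, we get $N\cdot(\{w_j\}+\ZZ_p)=\{Nw_j\}+\ZZ_p$, and these orbits remain pairwise disjoint; setting $z_j:=Nw_j$ proves \eqref{eq:56}.

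For \eqref{eq:57}, I would use that the spectral radii of convergence are intrinsic: they depend neither on the chosen derivation nor on the scalar $N$ (being computed from $\ker\nabla$ on subdiscs, cf.~\eqref{eq:27}), and $\Psi_b$ restricts to an isomorphism $D(x)\xrightarrow{\sim}D(\Psi_b(x))$ identifying $f_b$ with $T-c_b$. A conjugation argument then shows that, for every $a\in F$, the multiset of the $nN$ spectral radii of $(\cG(\Psi_b(x)),\nabla_{d_{T-c_b}}-a/N)$ equals the multiset of the $n$ spectral radii of $(\F(x),\nabla_{d_b}-a)$, each repeated $N$ times (using $\nabla_{d_b}-a=N(\nabla_{d_{T-c_b}}-a/N)$). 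Moreover each piece $M_{w_j}$, having a single $\ZZ_p$-orbit as spectrum, has a single spectral radius of convergence: otherwise its Robba decomposition by the spectral radii (Theorem~\ref{sec:robb-decomp-spectr}) would split it into two sub-objects, each again with single-orbit spectrum, hence — by Theorem~\ref{sec:spectr-theory-sense-1} applied to each — with spectral radius equal to the same value of the formula, a contradiction. Consequently the $n$ spectral radii of $(\F(x),\nabla_{d_b}-a)$ are, with multiplicities, drawn from the values $\RS_1(\Psi_b(x),(M_{w_j},\nabla_{w_j}-a/N))$; fixing $i$ and choosing $j$ with $\RS_i(x,(\F(x),\nabla_{d_b}-a))=\RS_1(\Psi_b(x),(M_{w_j},\nabla_{w_j}-a/N))$, and using $\delta(Nz)=\delta(z)$ together with $T(z_j)-a=N(T(w_j)-a/N)$, the formula of Theorem~\ref{sec:spectr-theory-sense-1} for $\delta(T(w_j)-a/N)$ becomes, after inversion, exactly \eqref{eq:57}.

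The main difficulty lies in the comparison between the curve and its étale image at the start: one has to make sure that $\Psi_b^{-1}(\Psi_b(x))=\{x\}$ really forces $\Hx=\cO_{X,x}\ct_{\cO_{W_b,\Psi_b(x)}}\h{\Psi_b(x)}$, so that $\cG(\Psi_b(x))=\F(x)$ with matching connection, and that passing to the direct image $\cG$ does not perturb the radii of convergence beyond the harmless multiplicity $N$ — both are refinements of facts underlying \cite{np2,np3}. After that, the proof is bookkeeping with the spectrum and with $\delta$ under multiplication by the $\ZZ_p$-unit $N$.
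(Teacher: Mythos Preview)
Your argument is correct, but it follows a genuinely different path from the paper's. The paper does \emph{not} work at $x$ directly: it first observes that on a closed sub-annulus $A\subset C_b$ one can write $f_b=\gamma(T-c)^N(1+h)$ with $|h|<1$, extract an $N$-th root (using $\gcd(N,p)=1$) to obtain a bona fide coordinate $f$ on $A$ with $d_b=f\D f$, and then applies Theorem~\ref{sec:spectr-theory-sense-1} at points $y$ of $b$ inside $C_b$. The passage from $y$ to $x$ is then achieved by identifying the spectrum at $y$ with that of the push-forward at $\Psi_b(y)$ and invoking Proposition~\ref{sec:vari-spectr-case-4} (left/right description of $\Sigma_{\cdot,b,c}$ along $(c_b,\Psi_b(x)]$) together with the continuity of the radii. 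In short: the paper proves the statement on the open branch and takes a limit, whereas you push forward once and apply the affine-line theorem directly at $\Psi_b(x)$.

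Your route is more economical and avoids the variation machinery altogether, at the cost of two auxiliary facts that the paper establishes elsewhere. First, the claim that the $nN$ spectral radii of $\cG(\Psi_b(x))$ are the $n$ radii of $\F(x)$ each repeated $N$ times is exactly equation~\eqref{eq:55}, proved in Corollary~\ref{sec:link-betw-spectr-4} from the geometric fact that $\Psi_{b,\Omega}^{-1}(D(\Psi_b(x)))$ splits as $N$ discs each mapping isomorphically; this is independent of Proposition~\ref{sec:link-betw-spectr-1}, so there is no circularity. Second, your purity step (``each $M_{w_j}$ has a single spectral radius'') appeals to the formula of Theorem~\ref{sec:spectr-theory-sense-1} for the Robba sub-objects of $M_{w_j}$; as recalled here that theorem is phrased for $\F(x)$ with $\F$ global, so strictly speaking you should cite the source \cite{Azz22} directly, where the statement is for arbitrary differential modules over $(\Hx,d_f)$. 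The paper's proof makes the same implicit identification when it asserts that \eqref{eq:57} holds at points of $C_b$ ``by Theorem~\ref{sec:spectr-theory-sense-1}'', so this is not a defect peculiar to your argument. The bookkeeping with $N\ZZ_p=\ZZ_p$ and $\delta(N\cdot)=\delta(\cdot)$ is correct.
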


 \begin{proof}
  We set $N:={\rm deg}(\Psi_b)$. Let $c$ such that $C_b=\couo{c}{r_1'}{r_2'}$. Then since $f_b$ is
  invertible in $\cO_X(C_b)$ and $\Psi_{b|_{C_b}}:C_b\to
  C_{\Psi_b(b)}$ has degree $N$, in all sub-closed annulus $A\subset C_b$ we have $f_b=\gamma(T-c)^N(1+h)$ with
  $h\in \cO_X(A)$ and $|h|<1$ (cf. \cite[Lemma~9.7.1/1]{Bosc},
  \cite[Lemma~1.6]{Lut93}). Since $\gcd(N,p)=1$, $(1+h)^{\fra{N}}$ is
  well defined. We set $f=(T-c)
  (1+h)^{\fra{N}}$. Then $f$ is a coordinate function on $A$
  (cf. \cite[Proposition~9.7.1/2]{Bosc}) and $d_b=f\D{f}$. Hence, the
  spectrum on any point of $b$ in $C_b$ with $d_b$ as derivation
  satisfies \eqref{eq:56} and \eqref{eq:57}
  (cf. Theorem~\ref{sec:spectr-theory-sense-1}). Note that for $y\in
  (x_{c,r'_1},x_{c,r'_2})\cup\{x\}$, the
  spectrum of $(\F(y),\nabla_{d_b})$ is the same as the spectrum of
  $(\Psi_b)_*(\F(y),\nabla_{d_b})$, as a differential module over
  $(\h{\Psi_b(y)},N(T-c_b)\dT)$. By
  Proposition~\ref{sec:vari-spectr-case-4} and the continuity of the spectral
  radii of convergence (cf. Remark~\ref{sec:radii-conv-line-1}), we deduce the
  result on $x$.
 \end{proof}
To prove Theorem~\ref{sec:link-betw-spectr} for any
 differential module over $(\Hx,d_b)$ and any $b\in T_x\mathscr{C}$, we need the following lemma. 

\begin{Lem}\label{sec:link-betw-spectr-4}
  Let $x\in X_{[2,3]}$ and $d_b$ be a bounded derivation as in \eqref{eq:17}. Let
  $((M,\nabla_l))_{l\in \NN}$ be a family of differential modules over
  $(\Hx,d_b)$, and $(M,\nabla)$ be a differential module over
  $(\Hx,d_b)$. Suppose that $\nabla_l \overset{l\to \infty}{\to}
  \nabla$ in $\Lk{M}$. Then for all $i\in \{1,\cdots,n\}$ we have $\RS_i(x,(M, \nabla_l)) \overset{l\to
   \infty}{\to}\RS_i(x,(M,\nabla))$. Moreover, if
  $\RS_i(x,(M,\nabla))\ne 1$, then there exists $l_0\in \NN$ such
  that, for all $l\geq l_0$ we have $\RS_i(x,(M, \nabla_l)) =\RS_i(x,(M,\nabla))$.
 \end{Lem}

 \begin{proof}
  If $X\subset \AF$, it is an obvious fact of
  Theorem~\ref{sec:link-betw-spectr-2}. From
  Theorem~\ref{sec:spectr-radii-diff}, we have $\Psi_{b\,
   \Omega}^{-1}(D(\Psi_b(x)))$ is a finite union of open disks.
  Moreover, for all open disk $D\subset \Psi_{b\,
   \Omega}^{-1}(D(\Psi_b(x)))$, $\Psi_{b\, \Omega|_{D}}:D\to
  D(\Psi_b(x))$ is an isomorphism. This means that after normalizing
  (return them centered at zero and with radius equal to 1)
  $D$ and $D(\Psi_b(x))$, we have
  $\Psi_{b\,\Omega|_D}(\disco{\Omega}{0}{r})=\disco{\Omega}{0}{r}$
  (see \cite[Lemma~1.11]{BP20}). This means that if $(M,\nabla)$ is a
  differential module over $(\Hx,d_b)$, then the spectral radii of
  $(\Psi_b)_*(M,\nabla)$ are exactly
  \begin{equation}
   \label{eq:55}
   \underbrace{\RS_1(x,(M,\nabla)),\cdots,
    \RS_1(x,(M,\nabla))}_{N\text{ times}},\cdots\cdots, \underbrace{\RS_n(x,(M,\nabla)),\cdots,
    \RS_n(x,(M,\nabla))}_{N\text{ times} }.
  \end{equation}
  Since the result holds for $(\Psi_b)_*(M,\nabla)$ then the result holds for $(M,\nabla)$.
 \end{proof}

 \begin{rem}
  If $\RS_i(x,(M,\nabla))=1$, we may ask whether we have $\RS_i(x,(M,\nabla_l))=1$ for $l$ greater
  than some $l_0$. We have the following counter-example, for
  $x=x_{0,1}$, we choose $d=\dT$, $\nabla_l=d+p^{l-1}T^{p^l}$ and $\nabla=d$.
 \end{rem}

 \begin{proof}[Proof of Theorem~\ref{sec:link-betw-spectr}]
Let $N:={\rm deg}(\Psi_b)$. We know that any $F$-analytic curve is a good analytic space
(cf. \cite[Proposition~3.3.7]{Duc}), therefore $\cO_{X,x}$ is dense in
$\Hx$. Let $G\in\cM_n(\Hx)$ be the associated matrix of $\nabla_{d_b}$
in a basis $\mathscr{B}$ of $M$. There exists a family $(G_l)_l\in\cM_n(\cO_{X,x})$
such that $G_l\to G$ in $\cM_n(\Hx)$. For all $l\in\NN$, there exists an affinoid
neighborhood $V_l$ of $x$ in $Y$ such that the coefficients of $G_l$
are analytic functions on $V_l$. Hence, we can define a differential
equation $(\F_l,\nabla_l)$ such that $\F_l(x)=M$ and $G_l$ is the
associated matrix of $\nabla_{l,d_b}$ in $\mathscr{B}$. Hence, we have
$\nabla_{l,d_b}\to \nabla$. By Theorem~\ref{sec:link-betw-spectr-2}
(for $\fra{N}(\Psi_b)_*(\nabla_{l,d_b})$ and $\fra{N}(\Psi_b)_*(\nabla)$),
Lemma~\ref{sec:link-betw-spectr-4} and
Proposition~\ref{sec:link-betw-spectr-1}, the spectrum satisfies
\eqref{eq:56} and \eqref{eq:57}. Therefore, we can obtain the
decomposition \eqref{eq:58} by Theorem~\ref{sec:robb-decomp-spectr},
the remaining statement follows directly.
\end{proof}
\begin{rem}\label{sec:link-betw-spectr-5}
By proving Theorem~\ref{sec:link-betw-spectr}, we deduce that
Theorem~\ref{sec:link-betw-spectr-2} holds for any
differential module $(M,\nabla)$ over $(\Hx,d_b)$, where $x$ is a
point of a quasi-smooth analytic curve and $d_b$ defined as in the
beginning of the section.
\end{rem} 
 
\subsection{Variation of the spectrum, the case of a quasi-smooth
 curve}
Since
\begin{equation}
 \label{eq:59}
 \Sigma_{\nabla_{d_b},F}(\Lk{\F(y)})=\phi(\Sigma_{(\Psi_b)_*\nabla_{d_{(T-c)}},F}(\Lk{(\Psi_b)_*\F(\Psi_b(y))}),
\end{equation}
with $\phi:\AF\to \AF$ $T\mapsto NT$ and $y\in (b\cap C_b)\cup \{x\}$,
Proposition~\ref{sec:vari-spectr-case-4} holds in $(b\cap C_b)\cup
\{x\}$. Therefore, we can define a notion of controlling graph. Indeed, we have the
following results as consequence of Theorem~\ref{sec:link-betw-spectr}
and Proposition~\ref{sec:vari-spectr-case-4}.
\begin{Pro}\label{sec:vari-spectr-case-7}
 Let $\cS$ be a weak triangulation on $X$, $(\F,\nabla)$ be a
 differential equation over $X$ and $D\subset
 X\setminus\Gamma_{\cS}$ be an open disk. Suppose that the radii of
 convergence of $(\F,\nabla)$ are constant in $D$ (respectively
 identically equal to $R_1,\cdots,R_n$ in $D$). Then for all $x\in D$
 not of type (1) or (4),
 and for all $b\in T_xX$ we have
 \begin{equation}
 \Sigma_{\nabla_{d_b},F}(\Lk{\F(x)})=\bigcup_{i=1}^n\{x_{0,\phi_i(x)}\}+\ZZ_p,
  \end{equation}
  with
  \begin{equation}
   \phi_i(x)=
   \begin{cases}
    0& \text{ if } r_{\cS,F}(x)\leq R_i,\\
    \frac{|p|^l\omega \cdot r_{\cS,F}(x)^{p^l}}{R_i^{p^l}} & \text{ if }
                         \omega^{\fra{p^{l-1}}}\cdot
                         r_{\cS,F}(x)\leq
                         R_i \leq
                         \omega^{\fra{p^{l}}}\cdot
                         r_{\cS,F}(x),
                         \text{ with }
                         l\in\NN\setminus\{0\},\\
    \frac{\omega \cdot r_{\cS,F}(x)}{R_i} & R_i\leq \omega \cdot r_{\cS,F}(x),\\
   \end{cases}
  \end{equation}
  (cf. Definition~\ref{sec:shape-quasi-smooth-2}).
 \end{Pro}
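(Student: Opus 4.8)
The plan is to transport the statement, via the étale morphism $\Psi_b$ used to define $d_b$, to the planar situation already settled in Corollary~\ref{sec:vari-spectr-case-5} (a differential equation on an open disc of $\AF$ with constant radii of convergence), and then to match the two formulas; Theorem~\ref{sec:link-betw-spectr} and Proposition~\ref{sec:vari-spectr-case-4} provide, respectively, the correct shape of the spectrum at $x$ and its computation by a limiting process along the branch $b$.

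First I would fix $x\in D$ and $b\in T_xX$. As $D$ is open in $X$ and $x\in D$, one has $T_xX=T_xD$; since $F$ is algebraically closed, $D$ is a genuine open disc, which we normalise to have radius $1$, so that $r_{\cS,F}(y)=r_F(y)$ for every $y\in D$ while $\cR_{\cS,i}(y,(\F,\nabla))=R_i$ for all $y\in D$ by hypothesis. Choose a section $C_b$ of $b$ which is an open annulus contained in $D$. By \eqref{eq:59}, for $y\in(b\cap C_b)\cup\{x\}$ we have
\[ \Sigma_{\nabla_{d_b},F}(\Lk{\F(y)})=\phi\bigl(\Sigma_{(\Psi_b)_*\nabla_{(T-c_b)},F}(\Lk{(\Psi_b)_*\F(\Psi_b(y))})\bigr), \]
with $\phi:\AF\to\AF$, $T\mapsto NT$ and $N=\deg(\Psi_b)$ prime to $p$. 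Since $F$ has residue characteristic $p$ we have $|N|=1$, so $\phi$ is an isometric automorphism of $\AF$ fixing $0$; hence $\phi(x_{0,\rho})=x_{0,\rho}$ for all $\rho\geq 0$ and $\phi(\ZZ_p)=\ZZ_p$, and therefore $\phi$ fixes every subset of $\AF$ of the form $\bigcup_i\{x_{0,\rho_i}\}+\ZZ_p$. It is thus enough to establish the claimed formula for $(\Psi_b)_*(\F,\nabla)$, a differential equation over an analytic domain of $\AF$ equipped with the derivation $(T-c_b)\dT$.

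Next I would apply Corollary~\ref{sec:vari-spectr-case-5} to $(\Psi_b)_*(\F,\nabla)$ near $\Psi_b(x)$. By \eqref{eq:55} its spectral radii of convergence are the $R_i$'s, each repeated $N$ times, and by the proof of Corollary~\ref{sec:link-betw-spectr-4} the map $\Psi_b$ restricts to an isomorphism between the maximal open disc $D(x)$ around $x$ and $D(\Psi_b(x))$; since the radii of convergence are intrinsic to the point, they transport along $\Psi_b$ without change, so $(\Psi_b)_*\F$ has constant radii $R_1,\dots,R_n$ (with multiplicities $N$) on an open disc containing $\Psi_b(x)$. Corollary~\ref{sec:vari-spectr-case-5} then gives, near $\Psi_b(x)$, a spectrum of the form $\bigcup_i\{x_{0,\psi_i(y)}\}+\ZZ_p$ with $\psi_i$ given by \eqref{eq:52}; repeating the radii $N$ times leaves this set unchanged, and after the normalisation above the quantity $r_F$ occurring in \eqref{eq:52} is $r_{\cS,F}$ and the common radii there are the $R_i$, so $\psi_i=\phi_i$. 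Applying $\phi$ yields the formula on $b\cap C_b$; since $y\mapsto\Sigma_{\nabla_{d_b},F}(\Lk{\F(y)})$ is continuous on $(b\cap C_b)\cup\{x\}$ (Proposition~\ref{sec:vari-spectr-case-4} together with the continuity of $r_{\cS,F}$ on that segment), while by Theorem~\ref{sec:link-betw-spectr} the spectrum at $x$ is again of the form $\{z_1,\dots,z_\mu\}+\ZZ_p$, passing to the limit $y\to x$ gives $\Sigma_{\nabla_{d_b},F}(\Lk{\F(x)})=\bigcup_i\{x_{0,\phi_i(x)}\}+\ZZ_p$.

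The step I expect to be the main obstacle is the careful bookkeeping of the various normalisations together with the passage from $d_b$ to a derivation of the form $f\dT$: one has to check, using the coordinate function $f=\gamma^{1/N}(T-c)(1+h)^{1/N}$ from the proof of Proposition~\ref{sec:link-betw-spectr-1}, that on the sub-annuli of $C_b$ the derivation $d_b$ really has that shape, so that Corollary~\ref{sec:vari-spectr-case-5}, stated for open discs with such derivations, genuinely applies, and that after un-normalising the discs the radius $r_F$ appearing in \eqref{eq:52} becomes exactly $r_{\cS,F}(x)$ and the common constant radii there are the $R_i$ of the hypothesis. Everything else is a direct transcription of the planar case.
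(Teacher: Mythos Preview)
Your approach is essentially the one the paper has in mind: the text immediately preceding the Proposition states that it is ``a consequence of Theorem~\ref{sec:link-betw-spectr} and Proposition~\ref{sec:vari-spectr-case-4}'', and the passage around \eqref{eq:59} explains how Proposition~\ref{sec:vari-spectr-case-4} transports to the curve via $\Psi_b$ and the dilation $\phi:T\mapsto NT$. Your write-up fills in exactly these steps, together with the key observation that $\phi$ fixes any set of the form $\bigcup_i\{x_{0,\rho_i}\}+\ZZ_p$ because $|N|=1$ and $N\ZZ_p=\ZZ_p$.

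One simplification worth noting: since $x$ lies in the open disc $D\subset X$, an affinoid neighbourhood $Y_b$ of $x$ can already be chosen \emph{inside} $D$, so that $\Psi_b$ may be taken to be an isomorphism onto a closed disc in $\AF$ (this is allowed by the definition of $d_b$, and is automatic for type~(3) points). With $N=1$ there is no need to push forward along $\Psi_b$ at all: $d_b$ is then literally $f\tfrac{d}{df}$ for a coordinate $f$ on the disc, and Corollary~\ref{sec:vari-spectr-case-5} applies directly to $(\F|_D,\nabla)$. This avoids the step you flag as the ``main obstacle'' --- checking that $(\Psi_b)_*\F$ has constant radii on an open disc around $\Psi_b(x)$ --- which is indeed delicate in general, since $\Psi_b(x)$ need not sit in an open disc inside $W_b$ on which the pushforward behaves uniformly. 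Your argument still goes through for general $N$ via \eqref{eq:59} and the identification $d_b=f\tfrac{d}{df}$ on sub-annuli from the proof of Proposition~\ref{sec:link-betw-spectr-1}, but the cleaner route is simply to exploit that $D$ is already a disc.
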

 \begin{Defi}
  \begin{proof}
   Consequence of Corollary~\ref{sec:vari-spectr-case-5}.
  \end{proof}
 Let $\cS$ be a weak triangulation on $X$. Let $(\F,\nabla)$ be a
 differential equation over $X$, and let $\Gamma$ be a subgraph of $X$. We say that
 $\Gamma$ is a {\it controlling graph of the spectrum} of $(\F,\nabla)$
 with respect $\cS$, if and only if the
 following conditions are satisfied:
 \begin{enumerate}
 \item the points of $\Gamma$ are of type (2) or (3);
 \item $\Gamma_{\cS}\subset \Gamma$;
 \item $X\setminus\Gamma$ is a disjoint union of open disks;
 \item for each $D\subset X\setminus\Gamma$, let $x\in \Gamma$ with
  $\overline{D}=D\cup \{x\}$, for all $y\in D$ not of type (1) or (4) we have
  \begin{equation}
\Sigma_{\nabla_{d_b},F}(\Lk{\F(y)})=\bigcup_{i=1}^n\{x_{0,\phi_i(y)}\}+\ZZ_p,
  \end{equation}
 with
  \begin{equation}
\phi_i(y)=
   \begin{cases}
    0& \text{ if } r_{\cS,F}(y)\leq\cR_{\cS,i}(x,(\F,\nabla)),\\
    \frac{|p|^l\omega \cdot r_{\cS,F}(y)^{p^l}}{\cR_{\cS,i}(x,(\F,\nabla))^{p^l}} & \text{ if }
                         \omega^{\fra{p^{l-1}}}\cdot
                         r_{\cS,F} (y)\leq
                         \cR_{\cS,i}(x,(\F,\nabla))\leq
                         \omega^{\fra{p^{l}}}\cdot
                         r_{\cS,F} (y),
                         \text{ with }
                         l\in\NN\setminus\{0\},\\
    \frac{\omega \cdot r_{\cS,F} (y)}{\cR_{\cS,i}(x,(\F,\nabla))} & \cR_{\cS,i} (x,(\F,\nabla))\leq \omega\cdot r_{\cS,F} (y),\\
   \end{cases}
  \end{equation}
  (cf. Definition~\ref{sec:shape-quasi-smooth-2}).
 \end{enumerate} 
\end{Defi}
\begin{rem}
 These allow us to deduce the spectrum on
 the whole $X$ only by knowing it on $\Gamma$.
\end{rem}

\begin{Pro}
 The controlling graph of $\Gamma(\cR_\cS,\F)$ of
 $\cR_S(.,(\F,\nabla))$ (cf. Definition~\ref{sec:radii-conv-line-4}), with respect to $\cS$, is the smallest controlling graph of
 the spectrum of $(\F,\nabla)$.
\end{Pro}

\begin{proof}
 Let $\Gamma$ be a controlling graph of the spectrum of $(\F,\nabla)$. By
 the Transfer Theorem we have $X\setminus\Gamma \subset
 X\setminus\Gamma(\cR_\cS)$. Hence, $\Gamma(\cR_\cS,\F)\subset \Gamma$. By
 Proposition~\ref{sec:vari-spectr-case-7}, $\Gamma(\cR_\cS,\F)$ is a
 controlling graph of the spectrum. 
\end{proof}

\section{Finer decomposition with respect to the
 spectrum}\label{sec:refin-decomp-with}
\begin{conv}
 In this section we assume that $F$ is algebraically closed and $\crk=p>0$.
\end{conv}
Let $\cS$ be a weak triangulation of $X$ and $x\in X_{[2,3]}$. Let
$\mathscr{C}$ be a quasi-smooth algebraic curve, such
that there exists an affinoid neighborhood of $x$ in $X$ isomorphic
to an affinoid domain of $\mathscr{C}$, and $x$ lies in the interior
of $\mathscr{C}$. Let $b$, $b'\in T_x\mathscr{C}$, then
there exists $g\in \Hx$ such that $d_{b'}=gd_b$
(cf. \eqref{eq:17}). For $(M,\nabla)$ a differential module over
$(\Hx,d_b)$, we set $\nabla_{b'}=g\cdot\nabla$. Let $(\F,\nabla)$ be a
differential equation over $X$, let $\Gamma(\cR_{\cS,\F})$
be the controlling graph of radii of convergence of $(\F,\nabla)$, with respect to
$\cS$. Then we have $T_x\Gamma(\cR_{\cS,\F})\cup (T_x\mathscr{C}\setminus T_x
X)=\{b_1,\cdots,b_{N_x}\}$, we fix $b\in T_xX\setminus T_x\Gamma(\cR_{\cS,\F})$.

\begin{Theo}\label{sec:finer-decomp-with-1}
 Let $(M,\nabla)$ be a differential module over $(\Hx,d_b)$, with $(M,\nabla)=(\F(x),\nabla_{d_b})$. For each
 $b_i\in \{b_1,\cdots,b_{N_x}\}$, let
 $\Sigma_{b_i}$ be a finite subset of $\AF$ such that for all $ z, z'\in \Sigma_{b_i}$, with
 $z\ne z'$ we have $\{z\}+\ZZ_p\cap \{z'\}+\ZZ_p=\emptyset$ and
 $\Sigma_{\nabla_{b_i}}(\Lk{M})=\Sigma_{b_i}+\ZZ_p$. We set
 $\Sigma:=\prod_{i=1}^{N_x}\Sigma_{b_i}$. For $v\in \Sigma$, we have
 $v=(v_1,\cdots, v_{N_x})$, where $v_i \in \Sigma_{b_i}$ for
 $i=1,\cdots,N_x$. Then there exists a unique (up to an isomorphism) decomposition
 \begin{equation}
  \label{eq:61}
  (M,\nabla)=\bigoplus_{v\in\Sigma}(M_v,\nabla_v),
 \end{equation}
 such that $(M_{v},\nabla_{v})$ are differential modules over
 $(\Hx,d_b)$, if $M_v\ne 0$, then each sub-quotient of $(M_v,(\nabla_v)_{b_i})$
 has spectrum equal to $\{v_i\}+\ZZ_p$.
\end{Theo}

\begin{proof}
 First of all we prove that such a decomposition exists. Let
 $l\in\{1,\cdots,N_x\}$, we set $\Sigma_l=\prod_{i=1}^l\Sigma_{b_i}$
 and for $v\in \Sigma$ we set $v^l=(v_1,\cdots, v_l)$. By induction on $l$, we 
 prove that there exists a decomposition 
 \begin{equation}
  \label{eq:38}
  (M,\nabla)=\bigoplus_{v^l\in\Sigma_l}(M_{v^l},\nabla_{v^l}),
 \end{equation}
 such that $(M_{v^l},\nabla_{v^l})$ are differential modules over
 $(\Hx,d_b)$, and if $M_{v^l}\ne 0$, then for all $i\in\{1,\cdots,l\}$
 each sub-quotient of $(M_{v^l},(\nabla_{v^l})_{b_i})$ has spectrum
 equal to $\{v_i\}+\ZZ_p$. For $l=1$, we use
 Theorem~\ref{sec:link-betw-spectr} to decompose $(M,\nabla_{b_1})$
 as a differential module over $(\Hx,d_{b_1})$. Then it induces a
 decomposition of $(M,\nabla)$ as a differential module $(\Hx,d_b)$,
 which is the following:
 \begin{equation}
  \label{eq:37}
  (M,\nabla)=\bigoplus_{v^1\in\Sigma_1}(M_{v^1},\nabla_{v^1}),
 \end{equation}
  such that $(M_{v^1},\nabla_{v^1})$ are differential modules over
 $(\Hx,d_b)$, and if $M_{v^1}\ne 0$, then each sub-quotient of $(M_{v^1},(\nabla_{v^1})_{b_1})$ has spectrum
 equal to $\{v_1\}+\ZZ_p$. Suppose now that for $l<N_x$ we have a
 decomposition as in \eqref{eq:38}. Then we have a decomposition
 \begin{equation}
  \label{eq:41}
  (M,\nabla)=\bigoplus_{v^{l+1}\in\Sigma_{l+1}}(M_{v^{l+1}},\nabla_{v^{l+1}})
 \end{equation}
 defined as follows. For each $v^l$, if
 $v_{l+1}\in\Sigma_{(\nabla^l)_{b_{l+1}},F}(\Lk{M_{v^l}})$ and
 $M_{v^l}\ne 0$, then $(M_{v^{l+1}},\nabla_{v^{l+1}})$ is the
 differential over $(\Hx,d_b)$ such that
 $(M_{v^{l+1}},(\nabla_{v^{l+1}})_{{b_{l+1}}})$ corresponds to the
 term of the decomposition \eqref{eq:58} of
 $(M_{v^{l}},(\nabla_{v^{l}})_{{b_{l+1}}})$, that satisfies
 $\Sigma_{v_{l+1},F}(\Lk{M_{v_{l+1}}})=\{v_{l+1}\}+\ZZ_p$. In the other case, we simply
 take $M_{v_{l+1}}=0$. In particular we have
 \begin{equation}
  \label{eq:44}
  (M_{v^l},(\nabla_{v^l})_{b_{l+1}})=\bigoplus_{w^{l+1}\in\Sigma_{l+1},w^l=v^l}(M_{w^{l+1}},(\nabla_{w^{l+1}})_{b_{l+1}}).
 \end{equation}
 Hence, we obtain the desired
 decomposition. Consequently, to obtain a decomposition as in
 \eqref{eq:61}, we simply choose a decomposition as \eqref{eq:38}
 with $l=N_x$.

 Now we prove the unicity. Suppose that we have two
 decompositions as in \eqref{eq:61}
 \begin{equation}
  \label{eq:43}
   (M,\nabla)=\bigoplus_{v\in\Sigma}(M_v,\nabla_v)=\bigoplus_{v\in\Sigma}(M'_v,\nabla'_v),
  \end{equation}
  Let $i_v: M_v\to M$ (resp. $i'_v:M'_v\to M$) be the canonical
  injection and $\pi_v:M\to M_v$ (resp. $\pi'_v: M\to M'_v$) be the
  canonical projection. Then for all $v\ne w$, we have $i_v\circ
  \pi'_w=0$ (resp. $i'_v\circ \pi_w=0$). Indeed, in the case where
  $M_v=0$ or $M'_w=0$ (resp. $M'_v=0$ or $M_w=0$) it
  obvious. In the case where $M_v\ne 0$
  and $M'_w\ne 0$ (resp. $M'_v\ne 0$ or $M_w\ne 0$) , if we assume
  that $i_v\circ\pi'_w\ne 0$ (resp. $i'_v\circ \pi_w\ne 0$), for $v_{i_0}\ne
  w_{i_0}$ we get a sub-quotient of
  $(M_v,(\nabla_v)_{b_{i_0}})$ (resp. $(M'_v,(\nabla'_v)_{b_{i_0}})$)
  with spectrum equal to $\{w_{i_0}\}+\ZZ_p$, which is absurd. Therefore,
  $i_v\circ \pi'_v$ and $i'_v\circ \pi_v$ are injective, hence
  $(M_v,\nabla_v)\simeq (M'_v,\nabla'_v)$.   
  
\end{proof}

\begin{rem}
 This decomposition is finer than the one
 provided by the radii of convergence, and even finer than the
 decomposition in Theorem~\ref{sec:link-betw-spectr}, it is shown in
 Example~\ref{sec:introduction-3}.
\end{rem}
\subsection{Dwork-Robba's decomposition} In this part we prove that
we have decomposition with respect to the spectrum analogous to
Dwork-Robba's decomposition. Let $X$ be a quasi-smooth curve and $x\in
X_{[2,3]}$. Let $\mathscr{C}$ be a quasi-smooth algebraic curve, such
that there exists an affinoid neighborhood of $x$ in $X$ isomorphic
to an affinoid domain of $\mathscr{C}$, and $x$ lies in the interior
of $\mathscr{C}$. We fix $b\in T_x\mathscr{C}$ and $d_b$ as in \eqref{eq:17}.

\begin{Theo}\label{sec:dwork-robb-decomp}
 Let $(M_x,\nabla_x)$ be a differential module over
 $(\cO_{X,x},d_b)$. We set
 $(M,\nabla)=(M_x\ct_{\cO_{X,x}}\Hx,\nabla_{x}\ot 1+1\ot d_b)$. Let
 $\Sigma$ be a finite subset of $\AF$ such that for $z,z'\in\Sigma$, with
 $z\ne z'$, we have $\{z\}+\ZZ_p\cap\{z'\}+\ZZ_p$ and
 $\Sigma_{\nabla,F}(\Lk{M})=\Sigma+\ZZ_p$. Then we have a unique decomposition
 \begin{equation}
  \label{eq:45}
  (M_x,\nabla_x)=\bigoplus_{z\in\Sigma}(M_{x,z},\nabla_{x,z}),
 \end{equation}
 such that $(M_{x,z},\nabla_{x,z}\ot 1+1\ot d_b)\simeq
 (M_z,\nabla_z)$ (term of decomposition in \eqref{eq:58}). 
\end{Theo}
\begin{proof}
 Let $\Sigma=\{z_1,\cdots,z_\nu\}$ with $z_i=x_{c_i,r_i}$, $r_i\geq
 0$ and $r_1\leq\cdots\leq r_\nu$. Firstly we prove the existence of
 such decomposition. Let $l\in\{1,\cdots,\nu-1\}$. By
 induction on $l$ we prove that we have the following decomposition
 \begin{equation}
  \label{eq:46}
  (M_x,\nabla_x)=\bigoplus_{i=1}^l(M_{x,z_i},\nabla_{x,z_i})\oplus (M_{x,l},\nabla_{x,l})
 \end{equation}
 such that $(M_{x,z_i},\nabla_{x,z_i}\ot 1+1\ot d_b)\simeq
 (M_{z_i},\nabla_{z_i})$, and $\Sigma_{\nabla_l,F}(\Lk{M_l})=\{z_{l+1},\cdots,z_\nu\}$, where
 $(M_l,\nabla_l)\simeq (M_{x,l}\ct_{\cO_{X,x}}\Hx,\nabla_{x,l}\ot
 1+1\ot d_b)$. For $l=1$, by
 Theorem~\ref{sec:link-betw-spectr} we have
 \begin{equation}
  \label{eq:50}
  (M,\nabla)=\bigoplus_{i=1}^\nu(M_{z_i},\nabla_{z_i}),
 \end{equation}
 with $\Sigma_{\nabla_{z_i}}(\Lk{M_{z_i}})=\{z_i\}+\ZZ_p$. Hence, we
 obtain
 \begin{equation}
  \label{eq:74}
   (M,\nabla-c_1)=\bigoplus_{i=1}^\nu(M_{z_i},\nabla_{z_i}-c_1).
  \end{equation}
  By Theorem~\ref{sec:link-betw-spectr}, we have
  $\RS_1(x,(M_{z_i},\nabla_{z_i}-c_1))=\min
  (\cR(r_i),\cR(\delta(c_i-c_1)))$ (cf. \eqref{eq:94}), in particular we
  have $\RS_1(x,(M_{z_1},\nabla_{z_1}-c_1))=\cR(r_1)$. Since $r_i\geq
  r_1$ and $\{z_i\}+\ZZ_p\cap \{z_1\}+\ZZ_p=\emptyset$, we should
  have $\RS_1(x,(M_{z_i},\nabla_{z_i}-c_1))< \cR(r_1)$ for $i\ne
  1$. By Theorem~\ref{sec:robb-decomp-spectr} we have
  \begin{equation}
   \label{eq:75}
   (M,\nabla-c_1)=\bigoplus_{\rho\in (0,1]}(M_{c_1}^\rho,\nabla_{c_1}^\rho),
  \end{equation}
  such that if
  $\rho\in\{\RS_1(x,(M,\nabla-c_1)),\cdots,\RS_n(x,(M,\nabla-c_1))\}$, then
  $(M_{c_1}^\rho,\nabla_{c_1}^\rho)$ is pure and
  $\RS_1(x,(M_{c_1}^\rho,\nabla_{c_1}^\rho))=\rho$. Otherwise
  $M^\rho_{c_1}=0$. By the unicity of the decomposition we should
  have $(M_{c_1}^{\cR(r_1)},\nabla_{c_1}^{\cR(r_1)})\simeq
  (M_{z_1},\nabla_{z_1}-c_1)$. On the other hand, by
  Theorem~\ref{sec:robb-decomp-spectr-2}, we have
  \begin{equation}
   \label{eq:76}
   (M_x,\nabla_x-c_1)=\bigoplus_{\rho\in (0,1]}(M_{x,c_1}^\rho,\nabla_{x,c_1}^\rho),
  \end{equation}
  with $(M_{x,c_1}^\rho\ct_{\cO_{X,x}}\Hx,\nabla_{x,c_1}^\rho\ot 1+
  1\ot d_b)\simeq (M_{c_1}^\rho,\nabla_{c_1}^\rho)$. We set
  $(M_{x,z_1},\nabla_{x,z_1})=(M_{x,c_1}^{\cR(r_1)},\nabla_{x,c_1}^{\cR(r_1)}+c)$
  and $(M_{x,1},\nabla_{x,1})=\bigoplus_{\rho\in
   (0,1]\setminus\{\cR(r_1)\}}(M_{x,c_1}^\rho,\nabla_{x,c_1}^\rho+c)$. By
  construction the decomposition
  $(M_x,\nabla_x)=(M_{x,1},\nabla_{x,1})\oplus
  (M_{x,1},\nabla_{x,1})$ is as in \eqref{eq:46} with $l=1$. We
  suppose now that we have decomposition \eqref{eq:46} for $l-1$, with
  $l>1$. In order to obtain decomposition \eqref{eq:46} for $l$, we
  decompose $(M_{x,l-1},\nabla_{x,l-1})$ with the same strategy as
  for $(M_x,\nabla_x)$ in
  the case $l=1$. Hence we obtain
  $(M_{x,l-1},\nabla_{x,l-1})=(M_{x,z_l},\nabla_{x,z_l})\oplus
  (M_{x,l},\nabla_{x,l})$, such that
  $(M_x,\nabla_x)=\bigoplus_{i=1}^l(M_{x,z_i},\nabla_{x,z_i})\oplus
  (M_{x,l},\nabla_{x,l})$ is as in \eqref{eq:46}. Hence,
  decomposition \eqref{eq:46} for $l=\nu-1$ is as decomposition
  \eqref{eq:45}.

  We prove now the unicity of the decomposition. Let
  $(M_x,\nabla_x)=\bigoplus_{i=1}^\nu(M'_{x,z_i},\nabla'_{x,z_i})$
  be another decomposition. Let $i_{x,z_i}: M_{x,z_i}\to M_x$
  (resp. $i'_{x,z_i}:M'_{x,z_i}\to M_x$, resp. $i_{z_i}:M_{z_i}\to M$) be the canonical
  injection and $\pi_{x,z_i}:M_x\to M_{x,z_i}$ (resp. $\pi'_{x,z_i}:
  M_x\to M'_{x,z_i}$, resp. $\pi_{z_i}:M\to M_{z_i}$) be the
  canonical projection. Then for all $i\ne j$, we have $i_{z_i}\circ
  \pi'_{z_j}=0$ and $i'_{z_i}\circ \pi_{z_j}=0$. Indeed, if we assume that $i_{x,z_i}\circ\pi'_{x,z_j}\ne 0$
  (resp. $i'_{x,z_i}\circ \pi_{x,z_j}\ne 0$), since $\cO_{X,x}$ is a
  field, after scalar extension by $\Hx$ we obtain $i_{z_i}\circ\pi_{z_j}\ne 0$
  which is absurd (cf. Corollary~\ref{sec:link-betw-spectr-6}). Therefore, $i_{x,z_i}\circ\pi'_{x_i}$ and
  $i'_{x,z_i}\circ\pi_{x_i}$ are injective. Consequently,
  $(M'_{x,z_i},\nabla'_{x,z_i})\simeq (M_{x,z_i},\nabla_{x,z_i})$.  

 \end{proof}
 Let $(\F,\nabla)$ be a
differential equation over $X$, let $\Gamma(\cR_{\cS,\F})$
be the controlling graph of radii of convergence of $(\F,\nabla)$, with respect to
$\cS$. Then we have $T_x\Gamma(\cR_{\cS,\F})\cup (T_x\mathscr{C}\setminus T_x
X)=\{b_1,\cdots,b_{N_x}\}$, we fix $b\in T_xX\setminus T_x\Gamma(\cR_{\cS,\F})$.

\begin{Theo}\label{sec:dwork-robb-decomp-1}
Let $(M_x,\nabla_x)$ be a differential module over $(\cO_{X,x},d_b)$,
with $(M_x,\nabla_x)=(\F_x,\nabla_{d_b})$. For each $b_i\in
\{b_1,\cdots,b_{N_x}\}$, let $\Sigma_{b_i}$ be a finite subset of
$\AF$ such that for all $z, z'\in \Sigma_{b_i}$ with $z\ne z'$, we
have $\{z\}+\ZZ_p\cap \{z'\}+\ZZ_p=\emptyset$ and
$\Sigma_{(\nabla_x){b_i},F}(\Lk{M_x\ct_{\cO_{X,x}}\Hx})=\Sigma_{b_i}+\ZZ_p$. We set $\Sigma:=\prod_{i=1}^{N_x}\Sigma_{b_i}$. For $v\in \Sigma$, we have $v=(v_1,\cdots, v_{N_x})$, where $v_i \in \Sigma_{b_i}$ for $i=1,\cdots,N_x$. Then
\begin{equation}
(M_x,\nabla_x)=\bigoplus_{v\in\Sigma}(M_{x,v},\nabla_{x,v}),
\end{equation}
such that if $M_{x,v}\ne 0$, then each sub-quotient of
$(M_{x,v}\ct_{\cO_{X,x}}\Hx,(\nabla_{x,v})_{b_i})$ has spectrum equal to
$\{v_i\}+\ZZ_p$. 
\end{Theo}

\begin{proof}
Since we have Theorem~\ref{sec:dwork-robb-decomp}, the prove is
analogous to the prove of Theorem~\ref{sec:finer-decomp-with-1}.
\end{proof}

\subsection{Relation with the refined decomposition}\label{sec:relat-with-refin-1}

From the spectrum we can recognize some refined differential
module (a notion that was initial introduced by Xiao in
\cite{Xia12}). However, in general the decomposition provided by the spectrum is
neither coarser nor finer then the
refined decomposition. In order to explain this, we start by giving some definitions.

\begin{Defi}
Let $x\in \AF$ of type (2) and $d:\Hx\to \Hx$ be a nonzero bounded $F$-linear derivation. Let $(M,\nabla)$ be a nonzero differential module over
$(\Hx,d)$. We say that $(M,\nabla)$ is {\it refined} if and only if
\begin{equation}
 \label{eq:63}
 \RS_1(x,(M,\nabla))<\RS_1(x,(M\ot_{\Hx}M^*,\nabla\ot 1+1\ot
 \nabla^*)).
\end{equation}
We say that two refined differential module$(M_1,\nabla_1)$
and $(M_2,\nabla_2)$ over $(\Hx,d)$ are equivalent if and only if
\begin{equation}
 \label{eq:64}
 \RS_1(x,(M_1,\nabla_1)=\RS_1(x,(M_2,\nabla_2))<\RS_1(x,(M_1\ot_{\Hx}{M^*}_2,\nabla_1\ot 1+1\ot
 {\nabla^*}_2)),
\end{equation}
and we write $(M_1,\nabla_1)\sim (M_2,\nabla_2)$. This condition
defines an equivalence relation on refine differential modules (cf.\cite[Lemma~6.2.14]{Ked}).
\end{Defi}

\begin{rem}
 We point out that for any differential module $(M,\nabla)$ over
 $(\Hx,d)$, for $a\in F$ with $|a|$ big enough $(M,\nabla-a)$ is refined. 
\end{rem}

\begin{Theo}[{\cite[Theorem~10.6.7]{Ked}}]\label{sec:relat-with-refin}
 Let $x\in \AF$ of type (2) and $d:\Hx\to \Hx$ be a nonzero bounded $F$-linear
 derivation. Let $(M,\nabla)$ be a differential module over $(\Hx,d)$
with non solvable radii of convergence. Then, for some finite tamely
ramified extension $\Omega$ of $\Hx$, $M\ot_{\Hx}\Omega$ admits a
unique direct sum decomposition
\begin{equation}
 \label{eq:65}
 M\ot_{\Hx}\Omega=\bigoplus_i M_i,
\end{equation}
with each $M_i$ refined, such that for $i\ne j$ we have $M_i\not\sim M_j$. 
\end{Theo}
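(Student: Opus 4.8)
The statement is \cite[Theorem~10.6.7]{Ked}; rather than merely invoke it, I sketch the argument. The plan is to first use the decomposition by spectral radii to reduce to a \emph{pure} module, and then to produce the refined decomposition in the pure case by a Frobenius descent followed by an analysis of the leading (``residual'') structure of the connection. For the reduction: apply Theorem~\ref{sec:robb-decomp-spectr} to write $(M,\nabla)=\bigoplus_\rho(M^\rho,\nabla^\rho)$, so that by non-solvability every $\rho$ occurring is $<1$. A refined module is automatically pure: if $N$ were refined but not pure, then $N\ot_{\Hx}N^*$ would contain a sub-quotient of radius $\RS_1(x,N)$ (pair the lowest-radius summand of $N$ with a higher one, using the behaviour of radii under tensor products), contradicting~\eqref{eq:63}. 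Hence it suffices to treat each $(M^\rho,\nabla^\rho)$ separately: any refined submodule of $M^\rho$ has first radius $\rho$, and for $\rho\ne\rho'$ a refined piece $N\subset M^\rho$ and $N'\subset M^{\rho'}$ satisfy $\RS_1(x,N\ot_{\Hx}(N')^*)=\min(\rho,\rho')<\max(\rho,\rho')$, so $N\not\sim N'$; thus the summands coming from distinct radii are automatically pairwise non-equivalent, and we may assume $M$ is pure of radius $R<1$.

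For the pure case, when $R$ is small enough $M$ has a Frobenius antecedent $M'$ with $\RS_1(x,M')=\RS_1(x,M)^{1/p}>R$, and refinedness, the equivalence relation~\eqref{eq:64}, and the existence and uniqueness of a refined decomposition are all compatible with Frobenius pushforward; iterating until the radius exceeds $\omega$ reduces us to a bounded range near $1$ in which no further descent is possible. In that range $M$ is ``close to trivial'' and carries a residual structure: passing to a cyclic vector and normalising the connection matrix, its leading term defines — after adjoining a finite \emph{tamely} ramified extension $\Omega/\Hx$ needed to make it semisimple — a linear-algebraic datum over the residue field whose generalised eigenspaces, grouped so that two lie in the same block exactly when the corresponding exponents are close in the sense of~\eqref{eq:64}, lift to a decomposition $M\ot_{\Hx}\Omega=\bigoplus_i M_i$ with each $M_i$ pure and refined. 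Intrinsically, this grouping is the one obtained by applying Theorem~\ref{sec:robb-decomp-spectr} to $\End(M)=M\ot_{\Hx}M^*$: the part of $\End(M)$ of radius $>R$ is a differential subalgebra containing $1$, and its radius-$1$ part becomes an étale $\Omega$-algebra after the tame base change, whose idempotents cut out the $M_i$.

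The blocks are pairwise non-equivalent by construction, since the inter-block exponent differences are arranged to be too large for~\eqref{eq:64}, i.e. $\RS_1(x,M_i\ot_{\Hx}M_j^*)\leq\max(\RS_1(x,M_i),\RS_1(x,M_j))$ for $i\ne j$. Uniqueness follows because a refined submodule of $M\ot_{\Hx}\Omega$ lying in a single equivalence class $v$ must sit inside the corresponding block $M_v$: if it met another block $M_w$ with $w\ne v$ nontrivially, one would get a nonzero morphism between refined modules in distinct classes, which the last inequality forbids. Combining this with the reduction of the first paragraph (a refined piece in any decomposition of $M$ is pure, hence lies in a single $M^\rho$ by the orthogonality of the radius summands) gives uniqueness of the refined decomposition of the original $M$.

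The step I expect to be the main obstacle is the middle one: making precise the passage to the residual/leading-term structure once no Frobenius descent remains, and proving that it supplies \emph{enough} idempotents over a \emph{tamely} ramified extension, together with the compatibility of refinedness and the equivalence relation~\eqref{eq:64} with both Frobenius pushforward and this residual reduction. These are exactly the estimates of \cite[Chapter~10]{Ked}; granted them, the decomposition and its uniqueness follow from the bookkeeping above.
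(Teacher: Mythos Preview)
The paper does not prove this theorem: it is stated with a citation to \cite[Theorem~10.6.7]{Ked} and used as a black box in the discussion of \S\ref{sec:refin-decomp-with}. So there is nothing to compare your argument against; you have gone beyond the paper by attempting to sketch Kedlaya's proof.

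That said, a brief comment on the sketch itself. Your outline follows the genuine architecture of \cite[Chapter~10]{Ked}: reduce to the pure case via Theorem~\ref{sec:robb-decomp-spectr}, iterate Frobenius antecedents to push the radius into the visible range $(\omega,1)$, and there read off the decomposition from a residual (Newton-polygon/leading-term) structure, with a tame extension needed to split it. The reduction and uniqueness paragraphs are fine. The part you flag as the obstacle is indeed the crux, and your description of it is a little too impressionistic: the actual mechanism in Kedlaya is not ``normalise the connection matrix and take generalised eigenspaces'' but rather the visible decomposition theorem (the matrix of $\nabla$ can be made to have entries in the valuation ring with reduction determining the spectral data), together with the fact that in the visible range the twist by a rank-one module realising each residual eigenvalue strictly increases the radius of the corresponding block. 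The tame extension enters because one may need an $m$th root of a uniformiser (with $m$ prime to $p$) to write down those rank-one twists. Your alternative phrasing via idempotents in the radius-$1$ part of $\End(M)$ is suggestive but would need work to make rigorous; in particular, it is not automatic that this part becomes an \emph{\'etale} algebra after only a tame base change. If you want a self-contained account, it is cleaner to follow the visible-decomposition route directly.
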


Let $x=x_{0,\rho}$, and let $(M,\nabla)$ be a differential
module over $(\Hx, T\dT)$. Suppose that
\[\Sigma_{\nabla,F}(\Lk{M})=\{x_{a_1,r_1},\cdots, x_{a_\nu,r_\nu}\}+\ZZ_p,\]
with $\{x_{a_i,r_i}\}+\ZZ_p\cap\{x_{a_j,r_j}\}+\ZZ_p=\emptyset$ for $i\ne j$,
and $r_i\ne 0$ for all $i$. By Theorem~\ref{sec:spectr-theory-sense-1}
we have
\begin{equation}
 \label{eq:66}
 (M,\nabla)=(M_{x_{a_1,r_1}},\nabla_{x_{a_1,r_1}})\oplus\cdots\oplus (M_{x_{a_\nu,r_\nu}},\nabla_{x_{a_\nu,r_\nu}}),
\end{equation}
where
$\Sigma_{\nabla_{x_{a_i,r_i}},F}(\Lk{M_{x_{a_i,r_i}}})=\{x_{a_i,r_i}\}+\ZZ_p$. In the
case where $|a_i|>r_i$, the differential module
$(M_{x_{a_i,r_i}},\nabla_{x_{a_i,r_i}})$ is refined. This means that if a differential module, with
spectrum equal to $\{x\}+\ZZ_p$, is not refined then
$x=x_{0,r}$. In the case where $(M_{x_{a_i,r_i}},\nabla_{x_{a_i,r_i}})$
and $(M_{x_{a_j,r_j}},\nabla_{x_{a_j,r_j}})$ are refined for $i\ne j$, in
general, we do not have $(M_{x_{a_i,r_i}},\nabla_{x_{a_i,r_i}})\not \sim
(M_{x_{a_j,r_j}},\nabla_{x_{a_j,r_j}})$. Indeed, for
example, if $a_i=a_j$ and $r_i<r_j<|a_i|$ we have $(M_{x_{a_i,r_i}},\nabla_{x_{a_i,r_i}})\sim
(M_{x_{a_j,r_j}},\nabla_{x_{a_j,r_j}})$. Moreover, in such situation
$(M_{x_{a_i,r_i}},\nabla_{x_{a_i,r_i}})\oplus
(M_{x_{a_j,r_j}},\nabla_{x_{a_j,r_j}})$ is refined. Hence, in general the decomposition provided by the spectrum is
neither coarser nor finer then the
refined decomposition.

In order to obtain a decomposition that is finer than both the refined decomposition and the decomposition with respect to the spectra corresponding to several directions, we propose the following definition.
\begin{Defi}
Let $x\in \AF$ of type (2) and $d:\Hx\to \Hx$ be a nonzero bounded $F$-linear derivation. Let $(M,\nabla)$ be a nonzero differential module over
$(\Hx,d)$. We say that $(M,\nabla)$ is {\it strongly refined} if and
only if for all differential module of rank 1 $(N,D)$
\begin{equation}
\label{eq:69}
 \RS_1(x,(M\ot N,\nabla\ot 1+ 1\ot D))<\RS_1(x,(M\ot_{\Hx}M^*,\nabla\ot 1+1\ot
 \nabla^*)),
\end{equation}
in other words, $(M\ot N,\nabla\ot 1+ 1\ot D)$ is refined.

We say that two strongly refined differential modules $(M_1,\nabla_1)$
and $(M_2,\nabla_2)$ over $(\Hx,d)$ are strongly equivalent if and
only if for all differential module of rank 1 $(N,D)$
\begin{equation}
 \RS_1(x,M_1\ot_{\Hx}N)=\RS_1(x,M_2\ot_{\Hx}N
 )<\RS_1(x,M_1\ot_{\Hx}{M_2}^*),
\end{equation}
in other word, $M_1\ot N\sim M_2\ot N$.
This condition
defines an equivalence relation on strongly refined differential modules. 
\end{Defi}

\begin{rem}
 The tensor product of a solvable differential module with a rank one
 differential module cannot be strongly refined.
\end{rem}
\begin{nota}
 Let $(\Omega,d)$ be a differential field. We set
 $\DD_{\Omega,d}:=\bigoplus_{i\in\NN}\Omega. d^i$ to be the ring of
 differential polynomials on $d$ with coefficients in $\Omega$. If
 there is no confusion about the derivation we simply denote it
 by $\DD_\Omega$. 
\end{nota}
Using this definition, we can reformulate Theorem~\ref{sec:relat-with-refin} in a strong version.
\begin{Theo}
 Let $x\in \AF$ of type (2) and $d:\Hx\to \Hx$ be a nonzero bounded
 $F$-linear derivation. Let $(M,\nabla)$ be a differential module over $(\Hx,d)$. Then, for some finite tamely
ramified extension $\Omega$ of $\Hx$, $M\ot_{\Hx}\Omega$ admits a
unique direct sum decomposition
\begin{equation}
 \label{eq:70}
 M\ot_{\Hx}\Omega=M_0\oplus \bigoplus_{i=1}^{\nu} M_i,
\end{equation}
where 
\begin{equation}
 \label{eq:71}
 M_0=\bigoplus_{i=1}^\mu N_i\ot \DD_\Omega/\DD_\Omega\cdot (d+h_i),
\end{equation}
with each $N_i$ is a solvable differential module and $h_i\in \Omega$, each $M_i$ is strongly refined, and for $i\ne j$ the differential modules
$M_i$ and $M_j$ are not strongly equivalent. 
\end{Theo}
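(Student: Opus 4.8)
The plan is to deduce this strong version of Theorem~\ref{sec:relat-with-refin} from the spectral decomposition of Theorem~\ref{sec:link-betw-spectr}, which is automatically at least as fine as both the decomposition by the radii of convergence and the refined decomposition. First I would reduce to the case $d=d_f$ for a coordinate function $f$: any bounded derivation is $g\cdot d_f$ with $g\in\Hx^{\times}$, and $(M,\nabla)\mapsto(M,g^{-1}\nabla)$ is a tensor-compatible bijection between differential modules over $(\Hx,g\cdot d_f)$ and over $(\Hx,d_f)$ which leaves the intrinsic spectral radii $\RS_i(x,-)$ unchanged, hence preserves the notions of (strongly) refined and (strongly) equivalent (cf. the remarks in Section~\ref{sec:spectr-theory-sense-2}). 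After a finite tame extension $\Omega$ of $\Hx$, taken large enough, Theorem~\ref{sec:link-betw-spectr} gives $M\ot_{\Hx}\Omega=\bigoplus_{i=1}^{\mu}(M_{z_i},\nabla_{z_i})$ with the $\Sigma_{\nabla_{z_i}}=\{z_i\}+\ZZ_p$ pairwise disjoint and every sub-quotient of $M_{z_i}$ of spectrum $\{z_i\}+\ZZ_p$. Let $G$ be the set of classes $\{w\}+\ZZ_p$ realized by some rank one differential module over $(\Omega,d)$; since tensoring rank one modules adds their spectra, $G$ is a group, it contains every class $\{a\}+\ZZ_p$ with $a\in F$ (realized by $\DD_\Omega/\DD_\Omega(d-a)$), and I would choose $\Omega$ so that $G$ is already stable under further tame extension.

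The heart of the argument, which I expect to be the main obstacle, is the classification of the pieces $M_{z_i}$. Using the purity of $M_{z_i}$ and the additivity of the spectrum under tensor products (in the form proved in \cite{Azz21,Azz22}), every sub-quotient of $\End(M_{z_i})=M_{z_i}\ot M_{z_i}^{*}$ has spectrum $\ZZ_p$, so by Theorem~\ref{sec:spectr-theory-sense-1} one gets $\RS_1(x,\End(M_{z_i}))=1$, i.e. $\End(M_{z_i})$ is solvable. For a rank one $(L,D)$ over $(\Omega,d)$ one has $\End(M_{z_i}\ot L)=\End(M_{z_i})$ and $\Sigma_{\nabla_{z_i}\ot 1+1\ot D}=\{z_i\ast z_L\}+\ZZ_p$, where $z_L$ is the class of $L$; hence $M_{z_i}\ot L$ is refined if and only if $z_i\ast z_L\notin\{0\}+\ZZ_p$. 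As $G$ is a group this yields the dichotomy: if $\{z_i\}+\ZZ_p\notin G$ then $M_{z_i}$ is strongly refined; if $\{z_i\}+\ZZ_p\in G$, choosing a rank one $L_i=\DD_\Omega/\DD_\Omega(d+h_i)$ of that class makes every sub-quotient of $N_i:=M_{z_i}\ot L_i^{*}$ have spectrum $\ZZ_p$, so $N_i$ is solvable and $M_{z_i}=N_i\ot L_i$. I expect the care here to be in making the additivity of the spectrum under $\ot$, both for modules and for their sub-quotients, precise enough to cover tensoring by an arbitrary (not necessarily scalar) rank one module over the fixed $(\Hx,d)$, which is exactly what separates strongly refined from refined.

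For existence I would then set $M_0:=\bigoplus_{\{z_i\}+\ZZ_p\in G}M_{z_i}=\bigoplus_i N_i\ot\DD_\Omega/\DD_\Omega(d+h_i)$, which is of the stated form, and group the remaining $z_i$ by the relation $z_i\sim z_j\iff z_i\ast z_j^{-1}\in G$ into classes $O_1,\dots,O_\nu$, putting $M_k:=\bigoplus_{z_i\in O_k}M_{z_i}$. A direct sum of strongly refined pieces with pairwise distinct spectra is again strongly refined: the $\hom$'s between the summands vanish, so $\End(M_k)$ stays solvable, while each summand stays non-solvable after any rank one twist (as $z_i\ast z_L\notin G$); and the same spectrum--tensor bookkeeping shows $M_{z_i}$ and $M_{z_j}$ are strongly equivalent exactly when $z_i\sim z_j$, so $M_1,\dots,M_\nu$ are pairwise strongly non-equivalent. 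For uniqueness I would prove the two $\hom$-vanishing statements — between a module $N\ot\DD_\Omega/\DD_\Omega(d+h)$ and a strongly refined module, and between strongly refined modules of distinct strong equivalence class — by noting that a nonzero morphism would produce a common sub-quotient violating a (strong) refinement inequality; then, writing the identity of $M\ot_{\Hx}\Omega$ in block form for two decompositions, all cross blocks vanish except the diagonal ones between matched strong equivalence classes, forcing $M_0=M_0'$ and $M_k\cong M_{\sigma(k)}'$. The routine parts (the block-matrix matching, the continuity and monotonicity inputs on the $\RS_i$) I would not spell out; the delicate input remains the classification of the $M_{z_i}$ and, correspondingly, the behaviour of the spectrum under tensoring with rank one modules.
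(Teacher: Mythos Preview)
The paper does not prove this theorem. It is presented as a ``strong version'' of Kedlaya's refined decomposition (Theorem~\ref{sec:relat-with-refin}), with no argument supplied; the paragraph immediately after it explicitly defers the construction to future work, via spectra for the whole family of derivations $(T-c)^n\dT$. So there is no proof in the paper to compare against.

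Your strategy, however, contains a genuine error rather than a gap you could fill with care. The pivotal step is the claim that a block $M_{z_i}$ with pure spectrum $\{z_i\}+\ZZ_p$ for one fixed direction $d_f$ has $\End(M_{z_i})$ solvable, which you justify by an ``additivity of the spectrum under $\otimes$''. That additivity is false for the Berkovich spectrum attached to a single branch. Over $(\h{x_{0,1}},\dT)$ take $M=L_1\oplus L_2$ with $L_1,L_2$ rank one of entries $a/(T-1)$ and $a/(T-c)$, where $|a|>1$, $|c|=1$, $|1-c|=1$. With respect to the direction $(0,x_{0,1})$ (derivation $T\dT$) one checks, exactly as in the computations around Example~\ref{sec:finer-decomp-with}, that both $L_1$ and $L_2$ have spectrum $\{x_{0,|a|}\}$; hence $M$ is a single spectral block $M_z$ in the sense of Theorem~\ref{sec:link-betw-spectr}, and every sub-quotient of $M$ has that same spectrum. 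Yet $\End(M)$ contains the rank-one summand $L_1\otimes L_2^{*}$ with entry $a(1-c)/[(T-1)(T-c)]$, of norm $|a|$ at $x_{0,1}$, so $\RS_1(x,\End(M))=\omega/|a|<1$: $\End(M)$ is not solvable. In fact $\RS_1(x,M)=\omega/|a|$ too, so this $M_z$ is not even refined; and it is not of the form (solvable)$\,\otimes\,$(rank one) either, since any rank-one twist leaves at least one of $L_1,L_2$ non-solvable. Your dichotomy therefore collapses on both sides. This is exactly the phenomenon the paper stresses around \eqref{eq:68} and in the closing discussion: pieces that look indecomposable from one branch can split from another, and the single-branch spectrum cannot see $\End$-solvability. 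A viable route would have to start from Kedlaya's refined decomposition and regroup the refined pieces by strong equivalence, or carry out the multi-derivation programme the paper announces; the single-direction spectral decomposition of Theorem~\ref{sec:link-betw-spectr} is simply too coarse to yield \eqref{eq:70}.
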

We suspect that by computing the spectrum with respect to all the
derivation of the form $d_{c,l}=(T-c)^l\dT$ with
$l\in \ZZ$, and
proving a decomposition theorem as in
Theorem~\ref{sec:link-betw-spectr}, the resulting decomposition will not
depend on the choice of the coordinates, and it will coincide with the strong refined decomposition if
it exists.

\printbibliography
\textsc{Tinhinane Amina, Azzouz}
\end{document}